\tikzstyle{bv}=[circle,draw=black!90,fill=black!100,thick,inner
\newtheorem{thm}{Theorem}
\numberwithin{equation}{section}
\numberwithin{thm}{section}
\newtheorem{theorem}[thm]{Theorem}
\newtheorem{lemma}[thm]{Lemma}
\newtheorem{corollary}[thm]{Corollary} 
\newtheorem{definition}[thm]{Definition}
\renewcommand{\Pr}{\mathbb{P}}
\newcommand{\pp}{p}
\newcommand{\mm}{m}
\newcommand{\om}{\omega}
\newcommand{\ga}{\gamma}
\newcommand{\de}{\delta}
\newcommand{\al}{\alpha}
\newcommand{\la}{\lambda}
\newcommand{\mC}{\mathcal{C}}
\newcommand{\mM}{\mathcal{M}}
\newcommand{\mT}{\mathcal{T}}
\title[Bijections for the factorizations of the long cycle]{Bijections and symmetries for the factorizations of the long cycle}
\author{Olivier Bernardi and Alejandro H. Morales}
\thanks{O. Bernardi aknowledges support from NSF grant DMS-1068626, ERC ExploreMaps and ANR A3.}
\date{\today}
\begin{document}
%\linenumbers
\setcounter{tocdepth}{2}

\begin{abstract}
We study the factorizations of the permutation $(1,2,\ldots,n)$ into $k$ factors of given cycle types. Using representation theory, Jackson obtained for each $k$ an elegant formula for counting these factorizations according to the number of cycles of each factor. In the cases $k=2,3$ Schaeffer and Vassilieva gave a combinatorial proof of Jackson's formula, and Morales and Vassilieva obtained more refined formulas exhibiting a surprising symmetry property. These counting results are indicative of a rich combinatorial theory which has remained elusive to this point, and it is the goal of this article to establish a series of bijections which unveil some of the combinatorial properties of the factorizations of $(1,2,\ldots,n)$ into $k$ factors for all $k$. We thereby obtain refinements of Jackson's formulas which extend the cases $k=2,3$ treated by Morales and Vassilieva. Our bijections are described in terms of ``constellations'', which are graphs embedded in surfaces encoding the transitive factorizations of permutations.\\ 
\end{abstract}

\maketitle

\section{Introduction}\label{sec:intro}
We consider the problem of enumerating the factorizations of the permutation $(1,2,\ldots,n)$ into $k$ factors according to the cycle type of each factor. In \cite{DMJ} Jackson established a remarkable \emph{counting formula} (analogous to the celebrated Harer-Zagier formula \cite{HZ}) characterizing the generating function of the factorizations of the long cycle according to the number of cycles of each factor. A combinatorial proof was subsequently given for the cases $k=2,3$ by Schaeffer and Vassilieva \cite{SV,SV2}. Building on these bijections, Morales and Vassilieva also established for $k=2,3$ a formula for the generating function of factorizations of $(1,2,\ldots,n)$ counted according to the cycle type of each factor \cite{MV1,MV2}. These formulas display a surprising \emph{symmetry property} which has remained unexplained so far.

In this article we explore the combinatorics of the factorizations of the permutation $(1,2,\ldots,n)$ through a series of bijections. All our bijections are described in terms of \emph{maps} and \emph{constellations} which are graphs embedded in surfaces encoding the transitive factorizations in the symmetric group (see Section~\ref{sec:def} for definitions). A summary of our bijections is illustrated in Figure~\ref{fig:summary}.
Our first bijection gives an encoding of the factorizations of the permutation $(1,2,\ldots,n)$ into \emph{tree-rooted $k$-constellations}. This encoding allows one to easily prove the case $k=2$ of Jackson's counting formula, as well as to establish the symmetry property for all $k\geq 2$. However for $k\geq 3$, the tree-rooted $k$-constellations are still uneasy to count and we give further bijections. Eventually, we show bijectively that proving Jackson's counting formula reduces to proving an intriguing probabilistic statement (see Theorem~\ref{thm:tree-puzzle}). In Section~\ref{sec:smallk} we prove this probabilistic statement in the cases $k=2,3,4$ (thereby proving Jackson's counting formula for these cases) but the cases $k>4$ shall be treated (along with similar probabilistic statements) in a separate paper \cite{BM:trees-from-sets}. 
%All our bijections are described in terms of \emph{maps} and \emph{constellations} which are graphs embedded in surfaces encoding the transitive factorizations in the symmetric group. 
Before describing our results further we need to review the literature.\\

\begin{figure} 
\begin{center} 
\includegraphics[width=\linewidth]{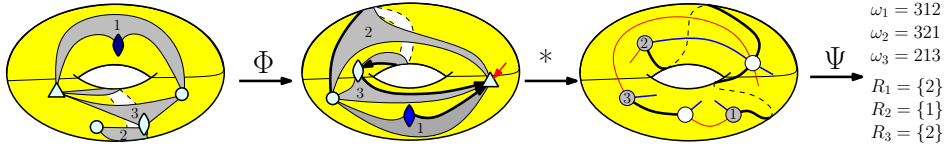} 
\caption{Summary of the bijections presented in this article. We start with the classical encoding of the factorizations of $(1,2,\ldots,n)$ by cacti (here $k=n=3$). We then establish a bijection $\Phi$ between  vertex-colored  cacti and tree-rooted constellations (Section~\ref{sec:tree-rooted}). We then  characterize the dual of tree-rooted constellations and obtain a correspondence with nebulas (Section \ref{sec:nebulas}). Lastly we establish a bijection $\Psi$ between nebulas and valid biddings.
} \label{fig:summary}
\end{center} 
\end{figure}

\noindent \textbf{Enumerative results about the factorizations of the long cycle.} Given $k$ partitions $\la^{(1)},\ldots,\la^{(k)}$ of $n$, it is a classical problem to determine the number $\kappa(\la^{(1)},\ldots,\la^{(k)})$ of factorizations $\pi_1\circ \pi_2\circ \cdots \circ \pi_k=(1,2,\ldots,n)$ such that the permutation $\pi_t$ has cycle type $\la^{(t)}$ for all $t\in\{1,\ldots,k\}$. %However, except in a few cases there is no explicit (cancellation free) expressions for $\kappa(\la^{(1)},\ldots,\la^{(k)})$, even for $k=2$. 
By the general theory of group representations, the \emph{connection coefficients} $\kappa(\la^{(1)},\ldots,\la^{(k)})$ can be expressed in terms of the characters of the symmetric group, but this expression is not really explicit even for $k=2$. However, Jackson established in \cite{DMJ} a remarkable formula for the generating function of factorizations counted according to the number of cycles of the factors, namely,
\begin{equation} \label{eq:Jackson-GF}
\sum_{\pi_1\circ \cdots \circ \pi_k=(1,2,\ldots,n)}\prod_{i=1}^k x_i^{\ell(\pi_i)}= \sum_{1\leq p_1,\ldots,p_k\leq n}\prod_{i=1}^k{x_i\choose p_i} n!^{k-1}M^{n-1}_{p_1-1,\ldots,p_k-1}
\end{equation}
where $\ell(\pi)$ is the number of cycles of the permutation $\pi$, and $M^n_{p_1,\ldots,p_k}$ is the coefficient of $x_1^{p_1}\cdots x_k^{p_k}$ in the polynomial $(\prod_{i=1}^k(1+x_i)-\prod_{i=1}^k x_i)^n$. 

Jackson's formula can equivalently be stated in terms of \emph{colored factorizations}. 
\begin{definition}\label{def:colored-factorization}
Given positives integers $p_1,\ldots,p_k$, a $(p_1,\ldots,p_k)$\emph{-colored factorization} of $(1,2,\ldots,n)$ is a tuple $(\pi_1,\ldots, \pi_k,\phi_1,\ldots,\phi_k)$, where $\pi_1,\ldots, \pi_k$ are permutations of $[n]:=\{1,\ldots,n\}$ such that $\pi_1\circ\cdots\circ\pi_k=(1,2,\ldots,n)$ and for all $t\in[k]$, $\phi_t$ is a \emph{surjective} mapping from $[n]$ to $[p_t]$ such that $\phi_t(a)=\phi_t(b)$ if $a,b$ are in the same cycle of $\pi_t$. In other words, the mapping $\phi_t$ can be seen as a coloring of the cycles of the permutation $\pi_t$ with colors in $[p_t]$ and we want \emph{all the colors} to be used.
\end{definition}
 It is easy to see that \eqref{eq:Jackson-GF} is equivalent to the following theorem.
\begin{thm}[Jackson's counting formula \cite{DMJ}] \label{thm:Jackson-counting}
The number $C^n_{p_1,\ldots,p_k}$ of $(p_1,\ldots,p_k)$-colored factorizations of the permutation $(1,2,\ldots,n)$ is equal to 
\begin{equation} %\label{eq:Jackson-counting}
n!^{k-1}M^{n-1}_{p_1-1,\ldots,p_k-1},
\end{equation}
where $M^n_{p_1,\ldots,p_k}=[x_1^{p_1}\cdots x_k^{p_k}](\prod_{i=1}^k(1+x_i)-\prod_{i=1}^k x_i)^n$ is the cardinality of the set $\mM^n_{p_1,\ldots,p_k}$ of $n$-tuples $(R_1,\ldots,R_n)$ of strict subsets $R_t$ of $[k]$ such that each integer $t\in[k]$ appears in exactly $p_t$ of the subsets $R_1,\ldots,R_n$.
\end{thm}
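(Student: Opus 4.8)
The plan is to prove the formula combinatorially, since the right-hand side factors so suggestively; the character-theoretic route via the connection coefficients of $S_n$ (using that $\chi^\la$ vanishes on the $n$-cycle except on hooks, where it is $\pm1$) does yield the count, but it hides exactly the combinatorics the statement is pointing at. First I would pass to the classical encoding of factorizations $\pi_1\circ\cdots\circ\pi_k=(1,2,\ldots,n)$ by \emph{cacti} (i.e.\ $k$-constellations): a cactus carries $n$ black vertices, one per hyperedge, and for each $t\in[k]$ a set of color-$t$ white vertices in bijection with the cycles of $\pi_t$. Under this encoding $\ell(\pi_t)$ is the number of color-$t$ white vertices, so a $(p_1,\ldots,p_k)$-colored factorization is precisely a cactus equipped, for each $t$, with a surjection of its color-$t$ vertices onto $[p_t]$. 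Hence $C^n_{p_1,\ldots,p_k}$ counts these \emph{vertex-colored cacti}, and the goal becomes to match that count with $n!^{\,k-1}\,|\mM^{n-1}_{p_1-1,\ldots,p_k-1}|$.

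The shape of the target then dictates the strategy. The factor $n!^{\,k-1}$ asks for $k-1$ independent labelings of the $n$ hyperedges, while a tuple $(R_1,\ldots,R_{n-1})$ in $\mM^{n-1}_{p_1-1,\ldots,p_k-1}$ should record strictly local data, one strict subset $R_i\subsetneq[k]$ per non-root hyperedge, indicating which colors are ``already linked'' there. The device that produces such a split is a distinguished \emph{spanning tree}: rooting the cactus and reading it off along the tree should linearize the structure, with the tree supplying the $n!^{\,k-1}$ worth of labeling freedom and the $n-1$ non-tree hyperedges supplying the subsets. Concretely I would first build a bijection $\Phi$ from vertex-colored cacti to \emph{tree-rooted constellations}, and then extract from a tree-rooted constellation both a $(k-1)$-tuple of labelings and an element of $\mM^{n-1}$. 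For $k=2$ this should close by hand: the subsets $R_i$ range over $\{\emptyset,\{1\},\{2\}\}$, so $|\mM^{n-1}_{p_1-1,p_2-1}|$ is the multinomial $\binom{n-1}{p_1-1,\,p_2-1,\,n-p_1-p_2+1}$ and the identity $C^n_{p_1,p_2}=n!\,|\mM^{n-1}_{p_1-1,p_2-1}|$ can be verified directly.

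The main obstacle is $k\ge3$. Once a spanning tree is fixed, the non-tree hyperedges do \emph{not} carry independent, freely chosen subsets: the embedding and transitivity constraints of the constellation correlate them, so a naive product decomposition into $n!^{\,k-1}$ and $|\mM^{n-1}|$ cannot literally hold. I expect this to be where the real work lies, and I would resolve it by two further reductions indicated by the overall strategy: dualizing the tree-rooted constellation to obtain \emph{nebulas}, and then a bijection $\Psi$ from nebulas to \emph{valid biddings}. The payoff of this chain is that matching the cactus count with $n!^{\,k-1}|\mM^{n-1}|$ should reduce to a single clean probabilistic statement about a random tree model (the content of Theorem~\ref{thm:tree-puzzle}). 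I would isolate that statement as the crux, prove it directly for the small cases $k=2,3,4$, and leave the uniform treatment of all $k$ to the separate analysis of the tree model.
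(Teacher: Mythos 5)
Your proposal follows essentially the same route as the paper: encode colored factorizations as vertex-colored cacti, apply the bijection $\Phi$ to tree-rooted constellations (closing the case $k=2$ there by counting bipartite maps with a spanning tree), then dualize to nebulas, pass via $\Psi$ to valid biddings, and thereby reduce Jackson's formula to the probabilistic tree statement, which is proved directly for $k=2,3,4$ with the general case deferred. This matches the paper's chain of reductions step for step, including its honest assessment that the naive ``spanning tree $\Rightarrow$ independent subsets'' decomposition fails for $k\geq 3$ and that the probabilistic statement is the crux.
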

The original proof of Theorem~\ref{thm:Jackson-counting} in \cite{DMJ} is based on the representation theory of the symmetric group. Bijections explaining the cases $k=2,3$ were subsequently given by Schaeffer and Vassilieva \cite{SV,SV2}. The case $k=2$ of Theorem~\ref{thm:Jackson-counting} is actually closely related to the celebrated Harer-Zagier formula \cite{HZ}, which was proved bijectively by Goulden and Nica \cite{Goulden:Harer-Zagier}. In Section~\ref{sec:tree-rooted} we shall give a bijection which extends the results in \cite{SV,SV2} to arbitrary $k$ (however, for a general $k$, this bijection does not directly imply Theorem~\ref{thm:Jackson-counting}).

We now consider a refined enumeration problem. Let $\ga^{(1)},\ldots,\ga^{(k)}$ be compositions of $n$, where $\ga^{(t)}=(\ga^{(t)}_{1},\ga^{(t)}_{2},\ldots,\ga^{(t)}_{p_t})$. We say that a $(p_1,\ldots,p_k)$-colored factorization $(\pi_1,\ldots, \pi_k,\phi_1,\ldots,\phi_k)$ has \emph{color-compositions} $(\ga^{(1)},\ldots,\ga^{(k)})$ if the permutation $\pi_t$ has $\ga^{(t)}_{i}$ elements colored $i$ (i.e. $\ga^{(t)}_{i}=|\phi_t^{-1}(i)|$) for all $t\in[k]$ and all $i\in[p_t]$. Let $c(\ga^{(1)},\ldots,\ga^{(k)})$ be the number of colored factorizations of color-compositions $(\ga^{(1)},\ldots,\ga^{(k)})$. In Section~\ref{sec:symmetry} we shall prove bijectively the following surprising symmetry property. 
\begin{thm}[Symmetry property] \label{thm:symmetry} Let $\ga^{(1)},\de^{(1)},\ldots,\ga^{(k)},\de^{(k)}$ be compositions of $n$.  
If for every $t\in[k]$ the length of the compositions $\ga^{(t)}$ and $\de^{(t)}$ are equal, then $c(\ga^{(1)},\ldots,\ga^{(k)})=c(\de^{(1)},\ldots,\de^{(k)})$.
\end{thm}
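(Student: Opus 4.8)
The plan is to prove the symmetry by transporting it, through the bijection $\Phi$ of Section~\ref{sec:tree-rooted}, to a statement about tree-rooted $k$-constellations, where it becomes a matter of redistributing vertex degrees. First I would unwind what $\Phi$ does to the color-composition. A $(p_1,\ldots,p_k)$-colored factorization is the same datum as a cactus whose color-$t$ vertices (the cycles of $\pi_t$) are colored with $[p_t]$; applying $\Phi$ and merging each color class into a single vertex produces a tree-rooted $k$-constellation in which factor $t$ contributes exactly $p_t$ vertices, and the vertex associated to color $i$ is incident to exactly $\ga^{(t)}_i=|\phi_t^{-1}(i)|$ of the $n$ hyperedges. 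Thus the degree sequence of the color-$t$ vertices is precisely the composition $\ga^{(t)}$, and Theorem~\ref{thm:symmetry} is equivalent to the assertion that the number of tree-rooted $k$-constellations with $n$ hyperedges and $p_t$ vertices of color $t$ for each $t$ is independent of the individual vertex degrees, depending only on $n$ and $p_1,\ldots,p_k$.

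Next I would reduce to a single elementary move. Fixing a factor $t$ and leaving the other factors untouched, and using that any two compositions of $n$ of the same length $p_t$ are connected by a sequence of moves transferring one unit from one part to another, it suffices to exhibit, for fixed indices $i\neq j$, a bijection between the tree-rooted constellations in which the color-$t$ vertices $i$ and $j$ have degrees $(a,b)$ and those in which they have degrees $(a+1,b-1)$. Since permuting the color labels of a single factor is a trivial relabeling bijection (it just swaps the corresponding parts of $\ga^{(t)}$), I may further assume that $i$ and $j$ are any convenient pair, for instance two vertices adjacent in the canonical order induced by the rooting.

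To build this bijection I would use the spanning tree and the root of the constellation to put a canonical linear order on the corners (hyperedge–vertex incidences) at the color-$t$ vertices, via the tour of the tree-rooted map. In this order the corners at a fixed color-$t$ vertex form a block, and the blocks follow the canonical order of the vertices, so that the block sizes are exactly $\ga^{(t)}_1,\ldots,\ga^{(t)}_{p_t}$. Transferring a single corner across the boundary between two consecutive blocks changes the two corresponding degrees by $\pm 1$ while leaving all other incidences, the spanning tree away from that boundary, and the entire structure of the other $k-1$ colors unchanged; running the same operation in reverse gives the inverse map.

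The hard part will be showing that this re-blocking is well defined and bijective, that is, that moving a corner from one color-$t$ vertex to an adjacent one again yields a \emph{valid} tree-rooted $k$-constellation: one must check that connectivity, the genus bookkeeping, and in particular the spanning-tree and rooting conditions are preserved, and that the operation is a genuine involution so that no double counting occurs. This is exactly where the tree-rootedness is essential — without the rigid frame provided by the marked spanning tree there is no canonical order in which to read the corners, and an arbitrary reassignment of a hyperedge between cycles of $\pi_t$ would destroy the requirement that the product of the factors be the long cycle. Once the local move is shown to respect all the constellation axioms, composing such moves connects any two color-compositions of the same lengths and yields the desired equality $c(\ga^{(1)},\ldots,\ga^{(k)})=c(\de^{(1)},\ldots,\de^{(k)})$.
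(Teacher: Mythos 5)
Your first two steps coincide with the paper's own strategy: use the degree-preserving property of $\Phi$ (Theorem~\ref{thm:bij-tree-rooted}) to translate the statement into the assertion that the number of vertex-labelled tree-rooted constellations with prescribed vertex-compositions depends only on the lengths of those compositions (Theorem~\ref{thm:sym-tree-rooted}), and then reduce to an elementary bijection $\varphi_{t,i,j}$ that transfers one unit of hyperdegree from the type-$t$ vertex labelled $i$ to the one labelled $j$. Up to this reduction your proposal is sound. The gap lies in the construction of that elementary move, which is the actual content of the paper's proof.

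Concretely, the ``canonical block structure'' you invoke does not exist. In the tour of a tree-rooted map, the corners at a vertex $v$ are visited in as many separate runs as there are tree edges at $v$ (the tour leaves $v$ to explore a subtree each time it meets a tree edge), so the corners of a fixed type-$t$ vertex form a single contiguous block only when that vertex is a leaf of the arborescence; moreover the runs belonging to distinct vertices are interleaved according to the tree structure, not listed ``in the canonical order of the vertices'' (the labels are arbitrary and bear no relation to the tour). Hence ``transferring a single corner across the boundary between two consecutive blocks'' is not a defined operation. Even reading your move charitably as the paper's operation --- unglue one hyperedge from $u_i$ and reglue it at $u_j$ --- the essential difficulty, which you explicitly defer (``the hard part will be showing that this re-blocking is well defined''), is precisely that the marked edges need not remain a $v_0$-arborescence after such a move. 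The paper must split into two cases: when the edge $e_i$ attached to the hyperedge being moved does not lie on the arborescence path from $u_j$ to the root, a single unglue/reglue works; when it does lie on that path, the naive move destroys the arborescence, and instead one performs a global exchange of almost all hyperedges between $u_i$ and $u_j$ together with a swap of their labels. Without this case analysis (or some substitute for it), the claimed bijection is not established, so the proposal as written has a genuine hole at its central step.
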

Given that there are ${n-1\choose \ell-1}$ compositions of $n$ with $\ell$ parts, the symmetry property together with Theorem~\ref{thm:Jackson-counting} gives the following refined formula.
\begin{corollary}
For any compositions $\ga^{(1)},\ldots,\ga^{(k)}$ of $n$, the number of colored factorizations of color-compositions $(\ga^{(1)},\ldots,\ga^{(k)})$ is
\begin{equation}\label{eq:MV}
c(\ga^{(1)},\ldots,\ga^{(k)})=\frac{n!^{k-1}M^{n-1}_{p_1-1,\ldots,p_k-1}}{\prod_{t=1}^k{n-1\choose \ell(\ga^{(t)})-1}},
\end{equation}
\end{corollary}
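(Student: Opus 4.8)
The plan is to recover the refined count $c(\ga^{(1)},\ldots,\ga^{(k)})$ as a uniform ``slice'' of the total count $C^n_{p_1,\ldots,p_k}$ supplied by Theorem~\ref{thm:Jackson-counting}. First I would observe that every $(p_1,\ldots,p_k)$-colored factorization carries a well-defined tuple of color-compositions $(\ga^{(1)},\ldots,\ga^{(k)})$, where $\ga^{(t)}=(|\phi_t^{-1}(1)|,\ldots,|\phi_t^{-1}(p_t)|)$ is a composition of $n$ into exactly $p_t$ parts, each part being positive precisely because $\phi_t$ is surjective. Since grouping the $(p_1,\ldots,p_k)$-colored factorizations by the fibre sizes of their colorings partitions them according to color-composition, we obtain the identity
\begin{equation*}
C^n_{p_1,\ldots,p_k}=\sum_{\substack{\ga^{(1)},\ldots,\ga^{(k)}\\ \ell(\ga^{(t)})=p_t\ \forall t}} c(\ga^{(1)},\ldots,\ga^{(k)}),
\end{equation*}
the sum ranging over all $k$-tuples of compositions of $n$ with $\ell(\ga^{(t)})=p_t$ for each $t\in[k]$.

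Next I would invoke the symmetry property (Theorem~\ref{thm:symmetry}): any two tuples appearing in this sum agree in every length $\ell(\ga^{(t)})=p_t$, so all the summands $c(\ga^{(1)},\ldots,\ga^{(k)})$ equal a common value. The sum therefore collapses to this value multiplied by the number of summands. As there are ${n-1\choose p_t-1}$ compositions of $n$ into $p_t$ parts, the number of summands is $\prod_{t=1}^k{n-1\choose p_t-1}$, whence
\begin{equation*}
C^n_{p_1,\ldots,p_k}=\left(\prod_{t=1}^k{n-1\choose p_t-1}\right)\, c(\ga^{(1)},\ldots,\ga^{(k)})
\end{equation*}
for any fixed tuple of color-compositions with these lengths.

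Finally, substituting $C^n_{p_1,\ldots,p_k}=n!^{k-1}M^{n-1}_{p_1-1,\ldots,p_k-1}$ from Theorem~\ref{thm:Jackson-counting} and dividing by the product of binomials yields \eqref{eq:MV}, once one records that $p_t=\ell(\ga^{(t)})$ so that ${n-1\choose p_t-1}={n-1\choose \ell(\ga^{(t)})-1}$. I expect no genuine obstacle here: the entire content of the corollary resides in the two theorems being combined, and the deduction is a purely formal averaging argument. The only point deserving (minimal) care is that the symmetry property is exactly strong enough to force every slice to be equal, so that Jackson's aggregate count distributes evenly across all $\prod_{t=1}^k{n-1\choose p_t-1}$ admissible choices of color-compositions.
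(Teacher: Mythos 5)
Your proposal is correct and is essentially the paper's own argument: the paper deduces the corollary in a single sentence by combining Theorem~\ref{thm:Jackson-counting} with the symmetry property of Theorem~\ref{thm:symmetry} and the fact that there are ${n-1\choose \ell-1}$ compositions of $n$ with $\ell$ parts, which is exactly your partition-and-average computation, including the observation that surjectivity of the $\phi_t$ makes each color-composition have exactly $p_t$ positive parts. Nothing is missing.
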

Equation \eqref{eq:MV} is precisely the result established by Morales and Vassilieva for the cases $k=2,3$ in \cite{MV1,MV2}. These refined results can actually be obtained by a representation theory approach but we have not found them explicitly in the literature for $k>3$.

As a final remark, observe that connection coefficients $\kappa(\la^{(1)},\ldots,\la^{(k)})$ are determined by the numbers $c(\la^{(1)},\ldots,\la^{(k)})$ of colored factorizations through a change of basis for symmetric functions:\\
$\displaystyle
\sum_{\la^{(1)},\la^{(2)},\ldots,\la^{(k)} }\kappa(\la^{(1)},\ldots,\la^{(k)})\prod_{t=1}^k\pp_{\la^{(t)}}(x_{t,1},x_{t,2},\ldots)\\
\indent \hspace{3cm} = \sum_{\la^{(1)},\la^{(2)},\ldots,\la^{(k)} }c(\la^{(1)},\ldots,\la^{(k)})\prod_{t=1}^k \mm_{\la^{(t)}}(x_{t,1},x_{t,2},\ldots), $\\
where the sums are over $k$-tuples of partitions of $n$ and $\pp_\la$, $\mm_\la$ denote respectively the \emph{power sum} and \emph{monomial} symmetric functions.\\

\noindent \textbf{A probabilistic puzzle.}
We now describe a probabilistic puzzle associated to the set $\mM^n_{p_1,\ldots,p_k}$ appearing in Theorem~\ref{thm:Jackson-counting}.
\begin{definition} \label{def:alpha}
For an integer $t$ in $[k]$ and a subset $R\subsetneq [k]$ we define the integer $\al(t,R)\in [k]$ by setting 
\begin{compactitem}
\item $\al(t,R)=t-1$ modulo $k$ if $t\in R$, 
\item $\al(t)=t+r$ modulo $k$ if $t\notin R$, $t+1,\ldots,t+r\in R$ and $t+r+1\notin R$.% ($r$ can take any value in $[k-1]$). 
\end{compactitem}
Furthermore, for integers $i_1,\ldots,i_{k-1}$ in $[n]$ (repetitions allowed) and subsets $R_1,\ldots,R_n\subsetneq [k]$, we define $\al((i_1,\ldots,i_{k-1}),(R_1,\ldots,R_n))$ to be the graph with vertex set $[k]$ and edge set $E=\{e_1,\ldots,e_{k-1}\}$, where $e_t$ is the edge $\{t,\al(t,R_{i_t})\}$.
\end{definition}

In Section \eqref{sec:biddings} we shall prove bijectively that Jackson's Theorem~\ref{thm:Jackson-counting} is equivalent to the following probabilistic statement.
\begin{thm}\label{thm:tree-puzzle} 
Fix an integer $k\geq 2$ and positive integers $n,p_1,\ldots,p_k$ and consider the uniform distribution on pairs $B=((i_1,\ldots,i_{k-1}),(R_1,\ldots,R_n))$ such that $i_1,\ldots,i_{k-1}$ are integers in $[n]$ (repetitions allowed) and $(R_1,\ldots,R_n)\in \mM^n_{p_1,\ldots,p_k}$. Then the probability that the graph $\al(B)$ is a tree is equal to the probability that the subset $R_1$ has cardinality $k-1$.
\end{thm}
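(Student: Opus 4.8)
The plan is to convert the event ``$\al(B)$ is a tree'' into a weighted spanning-tree count and compare it with the subset count via the Matrix-Tree theorem, after first recording a structural fact about the map $t\mapsto\al(t,R)$. First I would prove that for every strict subset $R\subsetneq[k]$ this map is a \emph{permutation} $\si_R$ of $[k]$: grouping $[k]$ into the blocks consisting of each element $s\notin R$ together with the maximal run $s+1,\dots,s+m$ of consecutive elements of $R$ following it (indices mod $k$), one checks that $\al(\cdot,R)$ preserves each block and acts on it as the cycle $(s,\,s+m,\,s+m-1,\dots,s+1)$, with a fixed point when $m=0$. Hence $\si_R$ has exactly $k-|R|$ cycles; in particular $|R|=k-1$ forces a single run and yields the full cycle $\si_R\colon t\mapsto t-1 \pmod k$. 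Consequently the edge $e_t=\{t,\al(t,R_{i_t})\}$ is $\{t,\si_{R_{i_t}}(t)\}$, and orienting it $t\to\si_{R_{i_t}}(t)$ realizes $\al(B)$ as the functional graph of a map $[k-1]\to[k]$ whose unique sink is $k$. Since $\al(B)$ has $k-1$ edges on $k$ vertices, it is a tree if and only if this functional graph is a spanning arborescence rooted at $k$.

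Next I would exploit the independence built into the model: the uniform law on $B$ is the product of the uniform law on $(i_1,\dots,i_{k-1})\in[n]^{k-1}$ and the uniform law on $(R_1,\dots,R_n)\in\mM^n_{p_1,\dots,p_k}$. On the subset side, $\mM^n_{p_1,\dots,p_k}$ is invariant under permuting the coordinates $R_1,\dots,R_n$, so $\Pr[|R_1|=k-1]=\tfrac1n\,\mathbb{E}\big[\#\{j:|R_j|=k-1\}\big]$. On the tree side, for a fixed tuple $(R_j)$ set $N_t(v)=\#\{i\in[n]:\si_{R_i}(t)=v\}$; summing the arborescence indicator over the independent uniform choices of $i_1,\dots,i_{k-1}$ and applying a standard (directed) Matrix-Tree computation gives
\begin{equation*}
\Pr\big[\al(B)\text{ is a tree}\big]=\frac{1}{n^{k-1}\,|\mM^n_{p_1,\dots,p_k}|}\sum_{(R_j)\in\mM^n_{p_1,\dots,p_k}}\det\widehat L\big((R_j)\big),
\end{equation*}
where $\widehat L$ is the $(k-1)\times(k-1)$ reduced Laplacian with $\widehat L_{t,t}=n-N_t(t)$ and $\widehat L_{t,v}=-N_t(v)$ for $t\neq v$, indices ranging over $[k-1]$. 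Thus the theorem is equivalent to the purely combinatorial identity
\begin{equation*}
\sum_{(R_j)\in\mM^n_{p_1,\dots,p_k}}\det\widehat L\big((R_j)\big)=n^{k-2}\sum_{(R_j)\in\mM^n_{p_1,\dots,p_k}}\#\{j:|R_j|=k-1\}.
\end{equation*}
For $k=2$ this is immediate, since $\widehat L$ is the scalar $n-N_1(1)=\#\{j:R_j\neq\emptyset\}=\#\{j:|R_j|=k-1\}$ and the coordinate symmetry above closes the argument.

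The main obstacle is evaluating the left-hand sum for general $k$. Expanding the determinant produces a signed sum of products $\prod_t N_t(v_t)$, and each factor $N_t(v)=\sum_{j}\mathbf{1}[\si_{R_j}(t)=v]$ is linear in the empirical statistics of the $R_j$; the difficulty is that distinct edges may select the same coordinate (the coincidences $i_s=i_t$), so the factors do not decouple and one must control these correlations against the global constraint that each $t$ occurs in exactly $p_t$ of the $R_j$. For $k=3,4$ the determinant is only $2\times2$ or $3\times3$, and I would expand it explicitly, translate each product $\prod_t N_t(v_t)$ into a count of tuples of $\mM^n_{p_1,\dots,p_k}$ with prescribed subsets in a bounded number of coordinates, and verify the identity directly using the classification of $\si_R$ from the structural lemma; this is the route for the cases treated here. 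A uniform treatment of all $k$---equivalently, a bijective or cycle-lemma proof of the displayed identity that organizes these correlations intrinsically---is what I expect to be genuinely hard and is deferred to \cite{BM:trees-from-sets}.
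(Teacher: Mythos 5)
Your reduction is sound, and it is essentially the paper's own frame in different packaging: the structural fact that $t\mapsto\al(t,R)$ is a permutation of $[k]$ with $k-|R|$ cycles, the identification of ``$\al(B)$ is a tree'' with ``the functional graph is an arborescence rooted at $k$'', the product structure of the uniform law, and the passage to a determinant are all correct (the paper itself remarks that its tree expansion for $k=4$ can be obtained via the matrix-tree theorem, and its inclusion--exclusion for $k=3$ is the same signed sum written by hand). Your $k=2$ argument is complete and matches the paper's. The genuine gap is that for $k=3$ and $k=4$ --- the only cases the paper actually proves, the general case being deferred to the same forthcoming reference you defer to --- your proposal stops at ``expand and verify directly'', and that verification is the entire mathematical content. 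Concretely, for $k=3$, writing $c_S$ for the number of coordinates $j$ with $R_j=S$, expanding your determinant and simplifying reduces the theorem to the identity
\begin{equation*}
\sum_{(R_j)\in\mM^n_{p_1,p_2,p_3}}\Bigl(c_{\{1\}}c_{\{2\}}+c_{\{1\}}c_{\{3\}}+c_{\{2\}}c_{\{3\}}\Bigr)
=\sum_{(R_j)\in\mM^n_{p_1,p_2,p_3}}c_{\emptyset}\Bigl(c_{\{1,2\}}+c_{\{1,3\}}+c_{\{2,3\}}\Bigr),
\end{equation*}
which follows neither from your structural lemma nor from coordinate symmetry: it requires a cancellation mechanism, namely that moving an element from one coordinate to another preserves membership in $\mM^n_{p_1,p_2,p_3}$ (equivalently, that the number of tuples with two prescribed coordinates depends only on the total content prescribed, so that prescribing $(\{1\},\{2\})$ and $(\emptyset,\{1,2\})$ give equal counts). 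This is precisely the paper's Exchange Lemma (Lemma~\ref{lem:exchange}), the heart of its $k=3$ proof, and your write-up contains no counterpart to it --- ``verify the identity directly'' is exactly the step where the theorem lives.

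For $k=4$ the gap is wider still. Beyond the larger determinant, there is a difficulty the paper explicitly confronts and you only name: exchange-type cancellations are valid only after conditioning on which of the indices $i_1,\ldots,i_{k-1}$ coincide, since moving content in a coordinate used by two factors simultaneously breaks the bijection. The paper needs six exchange lemmas, organized by the asymptotic order of the probabilities, together with this case analysis on coincidences, to settle $k=4$. Identifying the correlation problem, as you do, is not the same as resolving it. So what you have is a correct reformulation plus a complete proof of $k=2$ only; to match the paper you would need to supply the content-exchange argument (or an equivalent evaluation of the signed sums of prescribed-coordinate counts $|\mM^{n-2}_{\cdot}|$, $|\mM^{n-3}_{\cdot}|$) for $k=3$ and $k=4$.
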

In Section~\ref{sec:smallk}, we shall give a direct proof of Theorem~\ref{thm:tree-puzzle} for the cases $k=2,3,4$ (thereby establishing Theorem~\ref{thm:Jackson-counting} for these cases), and indicate how to handle a few more cases using a computer. In the forthcoming paper \cite{BM:trees-from-sets}, we shall give a direct proof of Theorem~\ref{thm:tree-puzzle} valid for all $k\geq 2$, and establish several other results of a similar flavor.\\

\noindent \textbf{Constellations, or the drawings of factorizations}. 
Our bijections are described in terms of \emph{$k$-constellations} which are certain \emph{maps} (embeddings of graphs in surfaces considered up to homeomorphism) with $k$ types of vertices and with faces colored white and black (black faces are also called \emph{hyperedges} and are colored in gray in our figures). %, such that every black face is incident to vertices of type $1,2,\ldots,k$ in the clockwise order (see precise definitions are given in Section~\ref{sec:def}). 
We refer the reader to Section~\ref{sec:def} for precise definitions, and to Figure~\ref{fig:exsconstellations} for some examples.
To a $k$-constellation with its hyperedges labelled $1,\ldots,n$, one associates the permutations $\pi_1,\ldots,\pi_k$ of $[n]$ where the cycles of the permutation $\pi_t$ are in correspondence with the vertices of type $t$: the cycle associated to a vertex $v$ is given by the counterclockwise order of the black faces incident to $v$; see Figure~\ref{fig:exsconstellations}. Actually, any tuple of permutations $\pi_1,\ldots,\pi_k$ of $[n]$ acting transitively on $[n]$ is associated to a unique $k$-constellation, thus $k$-constellations give a canonical way of ``drawing'' such tuples. Moreover, the number of cycles of the product $\pi_1\circ \pi_2\cdots\pi_k$ corresponds to the number of white faces of the associated constellation. 
In particular, the tuples $(\pi_1,\ldots,\pi_k)$ of permutations of $[n]$ such that the product $\pi_1\circ\pi_2\cdots\pi_k$ is an $n$-cycle (a permutation with 1 cycle) correspond bijectively to constellations with a single white face (because the transitivity condition is automatically satisfied in this case). Constellations with a single white face are called \emph{cacti}.\\

\begin{figure} 
\begin{center} 
\includegraphics[width=\linewidth]{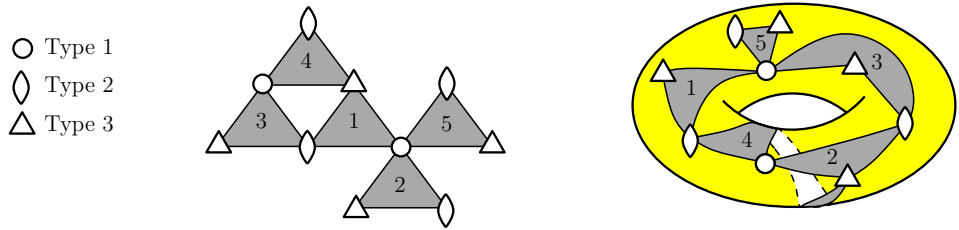} 
\caption{Two hyperedge-labelled $3$-constellations of size~5 (the shaded triangles represent the hyperedges). The 3-constellation on the left (which is embedded in the sphere) encodes the triple $(\pi_1,\pi_2,\pi_3)$, where $\pi_1=(1,2,5)(3,4)$, $\pi_2=(1,3)(2)(4)(5)$, $\pi_3=(1,4)(2)(3)(5)$, so that $\pi_1\pi_2\pi_3=(1,3,2,5)(4)$. The 3-cactus on the right (which is embedded in the torus) encodes the triple $(\pi_1,\pi_2,\pi_3)$, where $\pi_1=(1,3,5)(2,4)$, $\pi_2=(1,4)(2,3)(5)$, $\pi_3=(1)(2,4)(3)(5)$, so that $\pi_1\pi_2\pi_3=(1,2,3,4,5)$.
} \label{fig:exsconstellations}
\end{center} 
\end{figure}

\noindent \textbf{Outline.} 
In Section~\ref{sec:def}, we gather our definitions about maps, constellations and cacti. Theorems~\ref{thm:Jackson-counting} and~\ref{thm:symmetry} can then be stated in terms of vertex-colored cacti.

In Section~\ref{sec:tree-rooted}, we establish a bijection $\Phi$ between colored cacti and \emph{tree-rooted constellations} which are constellations with a marked spanning tree. Through this bijection the number of colors of the vertices of type $t$ in the cacti correspond to the number of vertices of type $t$ in the corresponding tree-rooted constellation.
This bijection is similar to the one used in \cite{OB:Harer-Zagier-non-orientable} to prove the Harer-Zagier formula \cite{HZ}.

In Section~\ref{sec:symmetry}, we establish a symmetry property for tree-rooted constellations which implies Theorem~\ref{thm:symmetry}. 
%Indeed bijection $\Phi$ maps colored cacti of \emph{color-compositions} $(\ga^{(1)},\ldots,\ga^{(k)})$ (which represent the colored factorizations of the long cycle of color-compositions $(\ga^{(1)},\ldots,\ga^{(k)})$) to tree-rooted constellations with \emph{vertex-compositions} $(\ga^{(1)},\ldots,\ga^{(k)})$. In terms of tree-rooted constellations it is easy to see the symmetry proving Theorem~\ref{thm:symmetry}.

In Section~\ref{sec:nebulas}, we decompose the \emph{dual} of tree rooted maps and obtain certain decorated one-face maps that we call \emph{nebulas}. Proving Jackson's counting formula then reduces to counting nebulas. 

In Section~\ref{sec:biddings} we give a bijection $\Psi$ between nebulas and \emph{valid biddings}, where the definition of bidding is closely related to the set $\mM^n_{p_1,\ldots,p_k}$ whose cardinality appears in Jackson's formula. We then prove that Jackson's formula is equivalent to the probabilistic statement given in Theorem~\ref{thm:tree-puzzle}. 

Lastly, in Section~\ref{sec:smallk} we prove the cases $k=2,3,4$ of Theorem~\ref{thm:tree-puzzle}, thereby establishing Theorem~\ref{thm:Jackson-counting} for these cases.

%%%%%%%%%%%%%%%%%%%%%%%%%%%%%%%%%%%%%%%%%%%%%%%%%%%%%%%%%%%%%%%
%%%%%%%%%%%%%%%%%%%%%%%%%%%%%%%%%%%%%%%%%%%%%%%%%%%%%%%%%%%%%%%

\section{Definitions}\label{sec:def}
For a positive integer $n$, we denote by $[n]$ the set $\{1,2,\ldots,n\}$. A \emph{composition} of $n$ is a sequence of positive integers $\al=(\al_1,\al_2,\ldots,\al_\ell)$ such that $\al_1+\al_2+\cdots+\al_\ell=n$. The integers $\al_1,\ldots,\al_\ell$ are the \emph{parts} of $\al$ and the integer $\ell$ is the \emph{length} of $\al$. A \emph{partition} is a composition $\al=(\al_1,\al_2,\ldots,\al_\ell)$ such that $\al_1\geq \al_2\geq \cdots\geq \al_\ell$.\\

\noindent \textbf{Graphs and maps.}
Our \emph{graphs} are undirected and can have loops and multiple edges. A \emph{digraph}, or \emph{directed graph}, is a graph where every edge is oriented; oriented edges are called \emph{arcs}. An \emph{Eulerian tour} of a directed graph is a directed path starting and ending at the same vertex and taking every arc exactly once. 
An edge $e$ of a graph defines two \emph{half-edges} each of them incident to an endpoint of~$e$. A \emph{rotation system} for a graph $G$ is an assignment for each vertex $v$ of $G$ of a cyclic ordering for the half-edges incident to~$v$. 

We now review the connection between rotation systems and embeddings of graphs in surfaces. 
We call \emph{surface} a compact, connected, orientable, 2-dimensional manifold without boundary (such a surface is characterized by its \emph{genus} $g\geq 0$). A \emph{map} is a cellular embedding of a connected graph in an oriented surface considered up to orientation preserving homeomorphism\footnote{Maps can be considered on non-orientable surfaces but we will not consider such surfaces here.}. By \emph{cellular} we mean that the \emph{faces} (connected components of the complement of the graph) are simply connected. 
For a map, the angular section between two consecutive half-edges around a vertex is called a \emph{corner}. The \emph{degree} of a vertex or a face is the number of incident corners.
A map $M$ naturally defines a rotation system $\rho(M)$ of the underlying graph $G$ by taking the cyclic order of the half-edges incident to a vertex $v$ to be the clockwise order of these half-edges around $v$. The following classical result (see e.g. \cite{MT}) states the relation between maps and graphs with rotation systems.
\begin{lemma} \label{lem:embedding-map} %\cite{MT}
For any connected graph $G$, the function $\rho$ is a bijection between the set of maps having underlying graph $G$ and the set of rotation systems of~$G$. 
\end{lemma}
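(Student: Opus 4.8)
The plan is to use the standard encoding of maps by permutations of half-edges. Let $H$ be the set of half-edges of the connected graph $G$, and let $\iota$ be the fixed-point-free involution of $H$ exchanging the two half-edges of each edge (this is fixed-point-free even for loops, since an edge always has two distinct half-edges, and $\iota$ depends only on $G$). By definition, a rotation system of $G$ is precisely a permutation $\sigma$ of $H$ whose cycles are the prescribed cyclic orderings of the half-edges around the vertices; thus specifying a rotation system amounts to specifying such a $\sigma$. First I would check that $\rho$ is well defined, which is essentially the definition: reading off the clockwise cyclic order of the half-edges around each vertex of a map $M$ yields a permutation $\sigma=\rho(M)$ whose cycles are exactly the vertex rotations, so $\rho(M)$ is a rotation system of $G$.

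For injectivity, I would show that the rotation system determines the map up to orientation-preserving homeomorphism. The key point is that the faces of $M$ can be recovered combinatorially from the pair $(\sigma,\iota)$: their boundary walks are the cycles of the permutation $\phi:=\sigma\circ\iota$ (with the orientation convention of \cite{MT}), and the connectedness of $G$ guarantees that $\langle \sigma,\iota\rangle$ acts transitively on $H$, so the surface is connected. Hence two maps $M,M'$ with underlying graph $G$ and $\rho(M)=\rho(M')=\sigma$ carry the same CW structure with the same attaching data for vertices, edges and faces. I would then assemble an orientation-preserving homeomorphism $M\to M'$ cell by cell: match corresponding vertices together with small neighborhood disks (possible since the clockwise cyclic orders at the vertices agree), extend across each edge via a tubular neighborhood, and finally extend over each face, which is an open disk bounded by the same closed walk in both maps. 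Since any orientation-preserving homeomorphism of a circle extends over the disk it bounds, these pieces glue to a global orientation-preserving homeomorphism, so $M=M'$ as maps.

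For surjectivity, given a rotation system $\sigma$ of $G$ I would realize it by the ribbon-graph (band) construction: replace each vertex by an oriented disk and each edge by a band, gluing the bands to the vertex-disks in the cyclic order dictated by $\sigma$ and respecting orientations. This produces a compact oriented surface-with-boundary $\Sigma_0$ that deformation retracts onto $G$, whose boundary circles are in bijection with the cycles of $\phi=\sigma\circ\iota$. Capping each boundary circle with a disk gives a closed oriented surface $\Sigma$ in which $G$ (the union of the cores of the disks and bands) is cellularly embedded, its faces being exactly the simply connected capping disks; connectedness of $G$ makes $\Sigma$ connected. By construction the clockwise rotation system of this map is $\sigma$, so $\rho$ is surjective; the genus of $\Sigma$ is then forced by Euler's relation $V-E+F=2-2g$ with $V,E,F$ the numbers of cycles of $\sigma$, $\iota$ and $\phi$.

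The step I expect to be the main obstacle is injectivity, specifically the rigorous cell-by-cell construction of the orientation-preserving homeomorphism: one must verify that matching the agreeing clockwise orders at the vertices extends coherently across the edges and then over the faces, with no residual gluing or genus ambiguity once the face walks are fixed. The surjective direction is more explicit, the only delicate bookkeeping there being the orientations of the bands; both directions ultimately rest on the standard fact, recorded in \cite{MT}, that a closed oriented surface together with a cellularly embedded graph is determined up to orientation-preserving homeomorphism by this combinatorial band/CW data.
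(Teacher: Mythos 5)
The paper offers no proof of this lemma at all: it is stated as a classical result with a pointer to the literature (``see e.g.\ \cite{MT}''), so there is no internal argument to compare against. Your proposal is a correct sketch of the standard proof that such references give --- encoding the rotation system as a permutation $\sigma$ of half-edges, recovering faces as cycles of $\sigma\circ\iota$, realizing any rotation system by the ribbon-graph (band) construction, and settling uniqueness by a cell-by-cell orientation-preserving homeomorphism --- so it supplies exactly the argument the paper delegates to its citation, with no gaps beyond the routine topological care you yourself flag in the injectivity step.
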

%Since the faces of maps are simply connected, maps are the \emph{gluings of polygons} obtained by identifying the edges of the polygons two by two in order to form an orientable surface (considered up to homeomorphism).\\ 
%%% OB. Move the last sentence to section 3?

\noindent \textbf{Constellations and cacti.} 
A \emph{$k$-constellation}, or \emph{constellation} for short, is a map with two types of faces \emph{black} and \emph{white}, and $k$ types of vertices $1,2,\ldots,k$, such that: 
\begin{compactitem}
\item[(i)] each edge separates a black face and a white face, 
\item[(ii)] each black face has degree $k$ and is incident to vertices of type $1,2,\ldots,k$ in this order clockwise around the face. 
\end{compactitem}
Two constellations are shown in Figure~\ref{fig:exsconstellations}. The black faces are also called \emph{hyperedges}. The \emph{size} of a constellation is the number of hyperedges. A constellation of size $n$ is \emph{labelled} if its hyperedges receive distinct labels in~$[n]$.

We now recall the link between constellations and products of permutations. We call $k$-hypergraph a pair $G=(V,E)$ where $V$ is a set of vertices, each of them having a \emph{type} in $[k]$, and $E$ is a set of \emph{hyperedges} which are subsets of $V$ containing exactly one vertex of each type. A \emph{rotation-system} for the hypergraph $G$ is an assignment for each vertex $v$ of a cyclic order of the hyperedges \emph{incident to} $v$ (i.e., containing $v$). Clearly each $k$-constellation defines a connected $k$-hypergraph together with a rotation system (the clockwise order of the hyperedges around each vertex). In fact Lemma~\ref{lem:embedding-map} readily implies the following result. 
\begin{lemma}\label{lem:k-embedding}
For any connected $k$-hypergraph $G$, there is a bijection between $k$-constellations of underlying $k$-hypergraph $G$ and the rotation systems of~$G$. 
\end{lemma}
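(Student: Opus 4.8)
The plan is to realize $k$-constellations with underlying hypergraph $G$ as a distinguished family of ordinary maps, and then to invoke Lemma~\ref{lem:embedding-map}. First I would associate to the connected $k$-hypergraph $G=(V,E)$ an ordinary graph $\Gamma$ on the vertex set $V$: for each hyperedge $h\in E$, whose vertices $v_1,\ldots,v_k$ have respective types $1,\ldots,k$, I add to $\Gamma$ the $k$ edges of the cycle $v_1\,v_2\,\cdots\,v_k\,v_1$, each edge recording the hyperedge $h$ that created it and the two types it joins. Since $G$ is connected, $\Gamma$ is connected, and by construction every edge of $\Gamma$ lies on the cycle of exactly one hyperedge. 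The point is that a $k$-constellation with underlying hypergraph $G$ is precisely a map with underlying graph $\Gamma$ satisfying conditions (i)--(ii): its black faces are the $k$-gons bounding the hyperedge-cycles and its white faces are everything else.

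Next I would observe that near each vertex the constellation axioms force a rigid local picture. At a vertex $v$ of type $i$, each incident hyperedge $h$ contributes exactly two half-edges of $\Gamma$ (one towards the type-$(i-1)$ vertex of $h$, one towards the type-$(i+1)$ vertex). Because each edge separates a black from a white face, the faces around $v$ alternate in colour, so each incident hyperedge occupies a single black corner at $v$ bounded by its two half-edges, and consecutive black corners are separated by exactly one white corner. Consequently the clockwise rotation $\rho(M)$ of $\Gamma$ at $v$ is entirely determined by the cyclic order in which the incident hyperedges appear -- which is exactly a rotation system of $G$ at $v$ -- together with the rigid rule that the two half-edges of each hyperedge sit consecutively (first the one towards type $i-1$, then the one towards type $i+1$, say) with white corners interleaved.

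This reduces the lemma to two mutually inverse constructions. In one direction, a $k$-constellation $M$ with underlying hypergraph $G$ has underlying graph $\Gamma$, and reading off at each vertex the clockwise cyclic order of incident hyperedges yields a rotation system of $G$; this assignment is injective by Lemma~\ref{lem:embedding-map}, since the full rotation $\rho(M)$ of $\Gamma$ can be recovered from it through the rigid local rule. In the other direction, starting from an arbitrary rotation system $r$ of $G$, the rigid local rule produces a rotation system of $\Gamma$, hence by Lemma~\ref{lem:embedding-map} a unique map $M_r$; I must then verify that $M_r$ really is a constellation with underlying hypergraph $G$. The hard part -- the step I expect to be the main obstacle -- is exactly this last verification: I would trace faces in $M_r$ to show that the $k$ edges of each hyperedge-cycle bound a single face, a $k$-gon visiting vertices of types $1,2,\ldots,k$ clockwise. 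Declaring these $k$-gons black and all remaining faces white then gives a well-defined two-colouring, since each edge belongs to exactly one hyperedge-cycle and thus separates its unique black $k$-gon from a white face; this establishes conditions (i) and (ii). As the two constructions are manifestly inverse to one another, the correspondence is a bijection.
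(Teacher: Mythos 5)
Your proposal is correct and takes essentially the same route as the paper: the paper offers no detailed proof, asserting only that Lemma~\ref{lem:embedding-map} ``readily implies'' the statement, and your argument---replacing each hyperedge by its $k$-cycle to obtain an ordinary graph $\Gamma$, noting that the constellation axioms rigidify the rotation at each vertex so that it carries exactly the information of a cyclic order of incident hyperedges, and then invoking Lemma~\ref{lem:embedding-map}---is precisely the intended expansion of that remark. The face-tracing verification you flag as the main obstacle does go through once the clockwise convention is fixed correctly (at a type-$i$ vertex the half-edge toward the type-$(i+1)$ neighbour must immediately precede, clockwise, the half-edge toward the type-$(i-1)$ neighbour, so that each hyperedge-cycle closes up into a $k$-gon with types increasing clockwise and every edge separates it from a type-increasing, hence white, face), so there is no gap.
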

Now given a hyperedge-labelled $k$-constellation $C$ of size $n$, we define some permutations $\pi_1,\ldots,p_k$ as follows: for each $t\in[k]$ we define the cycles of the permutation $\pi_t$ to be the counterclockwise order of the hyperedges around the vertices of type $t$. Examples are given in Figure~\ref{fig:exsconstellations}. We then say that the hyperedge-labelled $k$-constellation $C$ \emph{represents} the tuple $\varrho(C)=(\pi_1,\ldots,\pi_k)$. From Lemma~\ref{lem:k-embedding} it is easy to establish the following classical result (see e.g. \cite{LZ}).
\begin{lemma}\label{lem:embedding}
The representation mapping $\varrho$ is a bijection between hyperedge-labelled $k$-constellations of size $n$ and tuples of permutations $(\pi_1,\ldots,\pi_k)$ of $[n]$ acting transitively on $[n]$. Moreover the number of white faces of the constellation is equal to the number of cycles of the product $\pi_1\pi_2\cdots\pi_k$. 
\end{lemma}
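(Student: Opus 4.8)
The plan is to derive both assertions from Lemma~\ref{lem:k-embedding} together with the elementary combinatorial dictionary between rotation systems and tuples of permutations. First, for the bijectivity, I would unpack the data on each side and check they carry the same information. By Lemma~\ref{lem:k-embedding}, a hyperedge-labelled $k$-constellation $C$ of size $n$ amounts to a connected $k$-hypergraph $G$ whose $n$ hyperedges are labelled by $[n]$, equipped with a rotation system. On the other side, I claim that a tuple $(\pi_1,\ldots,\pi_k)$ of permutations of $[n]$ is exactly the data of a (not necessarily connected) labelled $k$-hypergraph with a rotation system: for each type $t\in[k]$ the vertices of type $t$ are taken to be the cycles of $\pi_t$, the hyperedge $i\in[n]$ being incident to the type-$t$ vertex recorded by the cycle of $\pi_t$ that contains $i$; and the rotation at a type-$t$ vertex is read off from the corresponding cycle of $\pi_t$ (up to reversal of the cyclic order, to reconcile the clockwise convention defining $\rho$ with the counterclockwise convention defining $\varrho$). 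These two readings are manifestly inverse to one another, since the cycle-decomposition of each $\pi_t$ is recovered from the incidences and rotations and vice versa.

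It then remains to match the two global conditions. I would check that $(\pi_1,\ldots,\pi_k)$ acts transitively on $[n]$ if and only if the associated hypergraph $G$ is connected. This is immediate from the construction: two labels $i,j$ lie in a common cycle of some $\pi_t$ precisely when the hyperedges $i$ and $j$ share a type-$t$ vertex of $G$, so the orbit of $i$ under $\langle\pi_1,\ldots,\pi_k\rangle$ coincides with the set of hyperedges in the connected component of $i$. Applying Lemma~\ref{lem:k-embedding} to each connected labelled $k$-hypergraph $G$ and assembling over all such $G$ then shows that $\varrho$ restricts to a bijection between hyperedge-labelled $k$-constellations of size $n$ and transitive tuples of permutations of $[n]$.

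Finally, for the face count I would carry out the classical face-tracing argument. Fix the orientation in which the boundary of each black $k$-gon is read clockwise through vertices of types $1,2,\ldots,k$; then every one of the $kn$ sides of the black faces borders a unique white face, and each white face acquires a consistent boundary traversal. Starting from a distinguished side of a hyperedge $i$ and following the white-face boundary, one performs a turn at a vertex of type $k$, then of type $k-1$, and so on down to type $1$; since a turn at a type-$t$ vertex advances from one incident hyperedge to the next in the cyclic order recorded by $\pi_t$, the net effect of one such block of $k$ turns is to send $i$ to $\pi_1(\pi_2(\cdots\pi_k(i)\cdots))=(\pi_1\pi_2\cdots\pi_k)(i)$. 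Thus the cyclic sequence of hyperedges read off around a white face is exactly a cycle of the product $\pi_1\pi_2\cdots\pi_k$ (a cycle of length $\ell$ corresponding to a white face of degree $k\ell$, consistent with $\sum k\ell=kn$), which yields the desired bijection between white faces and cycles of the product.

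The routine parts here are the two mutually inverse decodings and the orbit/connectivity equivalence. The step I expect to require the most care is the bookkeeping in the face-tracing: one must fix the orientation conventions so that a single block of the white-face traversal realizes $\pi_1\cdots\pi_k$ rather than its inverse or a cyclic shift of the factors. Once the local picture at a type-$t$ vertex is pinned down — namely that following the white face advances through the hyperedges incident to that vertex in the counterclockwise order encoded by $\pi_t$ — the identification of white-face cycles with cycles of $\pi_1\cdots\pi_k$ follows.
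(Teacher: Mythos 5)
Your proof is correct and follows exactly the route the paper indicates: the paper gives no detailed proof of this lemma, asserting only that it is a classical result easily established from Lemma~\ref{lem:k-embedding}, and your argument supplies precisely those omitted details (the dictionary between permutation tuples and labelled hypergraphs with rotation systems, the orbit/connectivity equivalence, and the face-tracing identification of white faces with cycles of $\pi_1\pi_2\cdots\pi_k$). Your orientation bookkeeping — turns at vertices of types $k,k-1,\ldots,1$, each turn at a type-$t$ vertex applying $\pi_t$, so that one block of $k$ turns realizes $\pi_1(\pi_2(\cdots\pi_k(i)\cdots))$ — is consistent with the paper's conventions (clockwise type order around hyperedges, counterclockwise hyperedge order defining $\pi_t$), as one can check against the examples in Figure~\ref{fig:exsconstellations}.
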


%the rotation system around the vertices of type $t$ gives a bijection $\pi_t$ of $[n]$ (each cycle of the permutation $\pi_t$ corresponds to the counterclockwise order of the hyperedges around one of the vertices of type $t$)
An edge of a constellation has \emph{type} $t\in[k]$ if its endpoints have types $t$ and $t+1$ (the types of the vertices and edges are considered modulo $k$). A $k$-constellation has \emph{type} $(p_1,\ldots,p_k)$ if it has $p_t$ vertices of type $t$ for all $t\in[k]$. The \emph{hyperdegree} of a vertex is the number of incident hyperedges.
A constellation of type $(p_1,\ldots,p_k)$ is \emph{vertex-labelled} if for each $t\in[k]$ the $p_t$ vertices of type $t$ have distinct labels in $[p_t]$. 
We say that such a constellation has \emph{vertex-compositions} $(\ga^{(1)},\ldots,\ga^{(k)})$ if for all $t\in[k]$, $\ga^{(t)}$ is a composition of size $n$ and length $p_t$ whose $i$th part is the hyperdegree of the vertex of type $t$ labelled $i$.

A $k$-constellation is \emph{rooted} if one of its hyperedges is distinguished as the \emph{root hyperedge}. The vertex of type $k$ incident to the root hyperedge is called \emph{root vertex}. There are $n!$ distinct ways of labelling a rooted constellation of size $n$ (because a rooted constellation has no symmetry preserving the root hyperedge). Hence, there is a 1-to-$(n-1)!$ correspondence between rooted constellations of size $n$ and hyperedge-labelled constellations of size $n$.

A $k$-\emph{cactus} is a $k$-constellation with a single white face. By Lemma~\ref{lem:embedding} the hyperedge-labelled $k$-cacti correspond bijectively to the factorizations of one of the $(n-1)!$ long cycles into $k$ factors (transitivity is redundant in this case), while rooted cacti correspond bijectively to the factorizations of the permutation $(1,2,\ldots,n)$. Since Jackson's counting formula is about \emph{colored factorizations} of $(1,2,\ldots,n)$ (see Definition~\ref{def:colored-factorization}), we now consider \emph{vertex-colored cacti}. Given some positive integers $q_1,\ldots,q_k$, a $(q_1,\ldots,q_k)$\emph{-colored cacti} is a $k$-cacti together with an assignment of \emph{colors} to vertices, such that for every $t\in[k]$ the vertices of type $t$ are colored using every color in $[q_t]$. A $(2,1,3)$-colored cacti is represented in Figure~\ref{fig:exscolconstellations}. The \emph{color-compositions} of a $(q_1,\ldots,q_k)$-colored cacti of size $n$ is the tuple $(\ga^{(1)},\ldots,\ga^{(k)})$, where for all $t\in[k]$, $\ga^{(t)}$ is a composition of size $n$ and length $q_t$ whose $i$th part is the number of hyperedges incident to vertices of type $t$ colored $i$. It is clear from the representation mapping $\varrho$, that $(q_1,\ldots,q_k)$-colored cacti of color-compositions $(\ga^{(1)},\ldots,\ga^{(k)})$ are in bijection with the $(q_1,\ldots,q_k)$-colored factorizations of $(1,2,\ldots,n)$ with color-compositions $(\ga^{(1)},\ldots,\ga^{(k)})$.\\

\begin{figure}[h] 
\begin{center} 
\includegraphics[width=.7\linewidth]{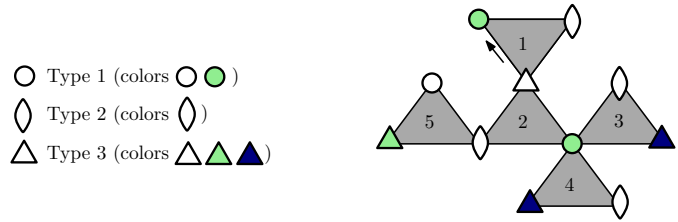} 
\caption{A $(2,1,3)$-colored cacti (embedded in the sphere) with color-compositions $(\ga^{(1)},\ga^{(2)},\ga^{(3)})$, where $\ga^{(1)}=(1,4)$, $\ga^{(2)}=(5)$ and $\ga^{(3)}=(2,1,2)$.}\label{fig:exscolconstellations}
\end{center} 
\end{figure} 

From now on, all our results and proofs are stated in terms of constellations and cacti.

%%%%%%%%%%%%%%%%%%%%%%%%%%%%%%%%%%%%%%%%%%%%%%%%%%%%%%%%%%%%%%%
%%%%%%%%%%%%%%%%%%%%%%%%%%%%%%%%%%%%%%%%%%%%%%%%%%%%%%%%%%%%%%%

\section{From cacti to tree-rooted constellations}\label{sec:tree-rooted}
In this section we establish a bijection between vertex-colored cacti and certain constellations with a distinguished spanning tree.
Let $C$ be a $k$-constellation and let $v_0$ be a vertex. We call \emph{$v_0$-arborescence} of $C$ a spanning tree $A$ such that every vertex $v\neq v_0$ of type $t$ is incident to exactly one edge of type $t$ in $A$ (equivalently, the spanning tree $A$ is oriented from the leaves toward $v_0$ by orienting every edge of $A$ of type $t\in[k]$ from its endpoint of type $t$ toward its endpoint of type $t+1$). A \emph{tree-rooted constellation} is a pair $(C,A)$ made of a rooted constellation $C$ together with a $v_0$-arborescence $A$, where $v_0$ is the root vertex of $C$. An example of tree-rooted constellation is given in Figure~\ref{fig:exBESTthm} (bottom right).

\begin{theorem}\label{thm:bij-tree-rooted}
Let $p_1,\ldots,p_k$ be positive integers. There is a bijection $\Phi$ between the set $\mC^n_{p_1,\ldots,p_k}$ of $(p_1,\ldots,p_k)$-colored rooted $k$-cacti of size $n$ (these encode the $(p_1,\ldots,p_k)$-colored factorizations of $(1,2,\ldots,n)$), and the set $\mT^n_{p_1,\ldots,p_k}$ of vertex-labelled tree-rooted $k$-constellations of size $n$ and type $(p_1,\ldots,p_k)$. 

Moreover, the bijection has the following \emph{degree preserving property}: for any vertex-colored cactus $C$, the number of edges joining vertices of type $t$ and color $i$ to vertices of type $t+1$ and color $j$ in $C$ is equal to the number of edges joining the vertex of type $t$ labelled $i$ to the vertex of type $t+1$ labelled $j$ in the tree-rooted constellation $\Phi(C)$.
\end{theorem}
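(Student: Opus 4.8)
The plan is to factor $\Phi$ through the set of Eulerian tours of an auxiliary digraph, with the classical BEST theorem as the engine. To a $(p_1,\ldots,p_k)$-colored cactus $C$ I associate the digraph $D=D(C)$ whose vertex set is $\bigsqcup_{t\in[k]}[p_t]$ (one vertex $(t,i)$ per type and color) and which has one arc per edge of $C$: the type-$t$ edge of a hyperedge, joining a type-$t$ vertex of color $i$ to a type-$(t+1)$ vertex of color $j$, becomes an arc from $(t,i)$ to $(t+1,j)$. The same recipe applied to a vertex-labelled tree-rooted constellation (reading labels in place of colors) produces a digraph on the same vertex set. In both cases the in-degree and out-degree of $(t,i)$ equal the number of hyperedges incident to type-$t$ vertices of color (resp. label) $i$, so $D$ is a connected Eulerian digraph, and its bidegrees record exactly the data appearing in the degree-preserving property: asking that $\Phi$ preserve $D$ is the same as asking for that property. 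Thus the construction splits into the fixed-$D$ fibers, and it suffices to give, for each such $D$, compatible bijections from both sides to the set of Eulerian tours of $D$ rooted at a prescribed initial arc $e^*$, where $e^*$ and the base vertex $v_0$ are read off from the root hyperedge and its type-$k$ vertex.

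On the tree-rooted side this is the BEST correspondence. By Lemma~\ref{lem:k-embedding} a tree-rooted constellation with digraph $D$ is the same as a rotation system of $D$ (the cyclic order of hyperedges around a vertex is the cyclic order of its out-arcs) together with a $v_0$-arborescence; and a spanning tree in which every non-root vertex of type $t$ carries a single outgoing type-$t$ arc is precisely a spanning in-tree of $D$ directed toward $v_0$. The BEST bijection sends such a pair (rotation system, in-tree toward $v_0$) to an Eulerian tour of $D$ started at $e^*$ — the in-tree being recovered as the last-exit arcs and the rotation as the cyclic order of exits — and the rooting by $e^*$ pins down the order at $v_0$.

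On the cactus side I use the unique white face. Since each edge borders the white face on exactly one side, traversing the white-face boundary crosses every edge once; orienting the traversal so that every edge is crossed from its type-$t$ endpoint to its type-$(t+1)$ endpoint (the two traversal directions are global reverses, and a local check at each vertex shows this orientation is consistent all along the walk) makes this boundary walk an Eulerian tour of $D$ rooted at $e^*$. For the inverse I extract from a tour its \emph{passage pairing} at each vertex $(t,i)$: every passage through $(t,i)$ enters along an in-arc and leaves along an out-arc, giving a bijection between in-arcs and out-arcs at $(t,i)$. Composing this with the canonical matching of the in-arc and out-arc belonging to the same hyperedge yields a permutation of the hyperedges at $(t,i)$; its cycles are declared to be the distinct type-$t$ color-$i$ vertices of the reconstructed cactus, each with the rotation given by the corresponding cycle (so a color class may split into several cactus vertices, as it must since $\ell(\pi_t)\ge p_t$). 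This rebuilds a vertex-colored cactus inducing $D$.

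The main obstacle is the verification that this reconstruction lands in the right set and is inverse to the white-face map: one must show it produces a \emph{genuine} cactus, that is a connected constellation with a \emph{single} white face. Connectivity follows from connectivity of $D$ together with the fact that one tour threads all hyperedges; the single-face property is the crux and should follow because the "next corner of the white face" operation induced by the reconstructed rotation system coincides, by the very definition of the passage pairing, with "next arc of the tour", which is a single $(kn)$-cycle. Granting this, $\Phi$ is obtained by composing the white-face map with the inverse of the BEST correspondence, and because everything was carried out over a fixed $D$, the degree-preserving property is immediate. What remains is bookkeeping: fixing $e^*$ and $v_0$ from the roots so the two rooted correspondences are compatible, confirming that a rooted cactus has no symmetry fixing $e^*$ (so the tour has a well-defined start), and checking that the identification of colors with labels matches on the two sides.
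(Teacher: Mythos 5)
Your proposal is correct and follows essentially the same route as the paper: the paper likewise factors $\Phi$ through Eulerian tours of the $k$-digraph whose vertices are the (type, color) classes (Lemma~\ref{lem:cactus-to-Eulerian}, proved by cutting the cactus into its white $kn$-gon and black $k$-gons), and then applies the BEST theorem to trade the tour for a $v_0$-arborescence plus a rotation system, which by Lemma~\ref{lem:k-embedding} is exactly a vertex-labelled tree-rooted constellation, with the degree-preserving property automatic because the underlying digraph is fixed. The differences are only presentational: the paper carries hyperedge labels throughout and removes them by an $n!$-to-one argument rather than working fiber-wise over a fixed digraph-with-hyperedge-structure, and your passage-pairing reconstruction (including the single-white-face verification you flag as the crux) is precisely the inverse of its polygon-gluing argument.
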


\noindent \textbf{Remark.}
The degree preserving property of Theorem~\ref{thm:bij-tree-rooted} implies that for any tuple of compositions $(\ga^{(1)},\ldots,\ga^{(k)})$, the mapping $\Phi$ establishes a bijection between cacti of color-compositions $(\ga^{(1)},\ldots,\ga^{(k)})$ and tree-rooted constellations of vertex-compositions $(\ga^{(1)},\ldots,\ga^{(k)})$.\\

\noindent \textbf{Remark.} In the case $k=2$ the tree-rooted $k$-constellations can be identified with rooted bipartite maps with a distinguished spanning tree (simply by considering the hyperedges as edges). These objects are easy to count (see \cite{OB:Harer-Zagier-non-orientable}), so that the case $k=2$ of Theorem~\ref{thm:Jackson-counting} follows easily from Theorem~\ref{thm:bij-tree-rooted} in this case.\\ %Another case which is easily solved is the planar case: that is when \\

The remaining of this section is devoted to the proof of Theorem~\ref{thm:bij-tree-rooted}. Our strategy parallels the one developed in \cite{OB:Harer-Zagier-non-orientable} (building on some ideas of Lass \cite{Lass:Harer-Zagier}) in order to prove extensions of the Harer-Zagier formula. This proof is illustrated in Figure~\ref{fig:exBESTthm}. We shall recombine the information given by a vertex-colored cactus into the information given by a tree-rooted constellation through the \emph{BEST Theorem} (see Lemma~\ref{lem:BESTthm} below). 

\begin{figure}[h] 
\begin{center} 
\includegraphics[width=\linewidth]{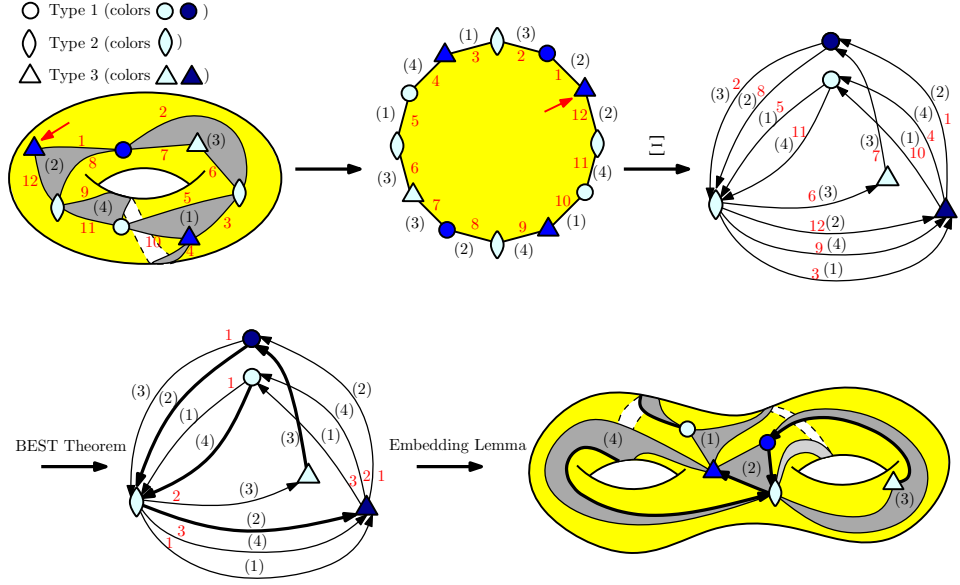} 
\caption{From a vertex-colored cactus to a tree-rooted constellation via the BEST Theorem.} \label{fig:exBESTthm}
\end{center} 
\end{figure}

We call \emph{$k$-digraph} a directed graph with $k$ types of vertices $1,\ldots,k$, such that every vertex has as many ingoing and outgoing arcs, and every arc goes from a vertex of type $t$ to a vertex of type $t+1$ for some $t\in[k]$ (as usual the types of vertices are considered modulo $k$). An arc going from a vertex of type $t$ to a vertex of type $t+1$ is said to have \emph{type} $t$. Note that a $k$-digraph has as many arcs of each type, and we say that it has \emph{size} $n$ if it has $n$ arcs of each type. An \emph{arc-labelling} of a $k$-digraph of size $n$ is an assignment of distinct labels in $[n]$ to the $n$ arcs of type $t$, in such a way that for any $(t,i)\in[k]\times [n]$ the end of the arc of type $t$ and label $i$ is the origin of the arc of type $t+1$ and label $i$. Observe that arc-labelled $k$-digraphs easily identify with hyperedge-labelled $k$-hypergraphs. A $k$-digraph has \emph{type} $(p_1,\ldots,p_k)$ if for each $t\in[k]$ there are $p_t$ vertices of type $t$. It is \emph{vertex-labelled} by assigning distinct labels in $[p_t]$ to its $p_t$ vertices of type $t$ for all $t\in[k]$.

\begin{lemma} \label{lem:cactus-to-Eulerian}
There is a bijection $\Xi$ between the set of hyperedge-labelled rooted $(p_1,\ldots,p_k)$-colored cacti of size $n$, and the set of pairs $(G,\eta)$ where $G$ is a arc-labelled vertex-labelled $k$-digraph of type $(p_1,\ldots,p_k)$ and $\eta$ is an Eulerian tour of $G$ starting and ending at a vertex of type $k$. 
\end{lemma}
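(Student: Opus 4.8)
The plan is to realize $\Xi$ as the operation of \emph{contracting color classes} of the cactus, with the Eulerian tour $\eta$ recording exactly the rotation-system information that this contraction destroys. First I would define the forward map. Given a hyperedge-labelled rooted $(p_1,\ldots,p_k)$-colored cactus $C$, I merge, for each $t\in[k]$, all type-$t$ vertices of a common color into a single vertex carrying that color as its label. Writing $\phi_t(i)$ for the color of the type-$t$ vertex incident to the hyperedge $i\in[n]$, I declare the arc of type $t$ and label $i$ of the resulting digraph $G$ to run from the vertex $\phi_t(i)$ to the vertex $\phi_{t+1}(i)$. The routine bookkeeping then checks out: the arc-labelling condition holds automatically because the end of the arc $(t,i)$ and the origin of the arc $(t+1,i)$ are both the vertex $\phi_{t+1}(i)$; at a vertex of type $t$ and color $c$ the incoming (type $t-1$) and outgoing (type $t$) arcs are both indexed by the hyperedges $i$ with $\phi_t(i)=c$, so in-degree equals out-degree and $G$ is a $k$-digraph of type $(p_1,\ldots,p_k)$, vertex-labelled because every color is used. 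I take $\eta$ to be the contour of the unique white face of $C$.

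Next I would identify this contour with an Eulerian tour of $G$. Orienting each edge so that the white face lies on its left and using the clockwise incidence convention~(ii) for hyperedges, an edge of type $t$ is traversed from its type-$t$ endpoint to its type-$(t+1)$ endpoint, which is exactly the orientation of the corresponding arc. Read as a sequence of arcs, the white contour is therefore a closed directed walk in $G$; it uses every edge, hence every arc, exactly once, because the white face is the unique face bordering each edge on its white side, so it is an Eulerian tour. The root vertex (of type $k$) together with the root hyperedge pins down the corner at which to begin, so that $\eta$ starts and ends at a vertex of type $k$.

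To invert, I would start from $(G,\eta)$, identify $G$ with a hyperedge-labelled $k$-hypergraph on the color-vertices, and reconstruct the splitting and the rotations from $\eta$. At a vertex $c$ of type $s$, each passage of $\eta$ enters along an incoming (type $s-1$) arc and leaves along an outgoing (type $s$) arc; since both kinds of arcs at $c$ are indexed by the hyperedges incident to $c$, the tour defines a permutation $\sigma_c$ of these hyperedges. I then split $c$ into one new vertex per cycle of $\sigma_c$, use each cycle as the cyclic order of hyperedges around the corresponding new vertex, and invoke Lemma~\ref{lem:k-embedding} to obtain a $k$-constellation; coloring each new vertex by the color-vertex it came from makes it $(p_1,\ldots,p_k)$-colored and transfers the hyperedge labels. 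By construction the white-face transitions of this constellation are precisely those recorded by $\eta$, so its white contour is $\eta$, and since $\eta$ is a single closed walk covering all edges the constellation has a single white face, i.e.\ is a cactus. It remains to verify that this map and $\Xi$ are mutually inverse.

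I expect the crux to be this geometric dictionary rather than the degree bookkeeping: establishing rigorously that the contour of the unique white face, traversed with the white face on the left under convention~(ii), produces a genuine directed walk in $G$ whose local transitions at each vertex are exactly the permutations $\sigma_c$, and that ``single white face'' corresponds exactly to ``single Eulerian tour.'' This is really the assertion that a one-white-face constellation is the same data as an Eulerian tour of its color-contraction, and pinning down the orientation conventions and the root/starting-point bookkeeping carefully is what makes $\Xi$ and its inverse literally inverse to one another.
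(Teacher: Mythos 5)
Your proof is correct, but it takes a genuinely different route from the paper's. The paper never constructs an inverse map: it cuts the cactus open along its unique (simply connected) white face, observing that a rooted hyperedge-labelled colored cactus is exactly the data of a rooted $(p_1,\ldots,p_k)$-colored white $kn$-gon glued to $n$ labelled black $k$-gons along edges respecting types and colors; the colored $kn$-gon, read counterclockwise from the root corner, \emph{is} the pair $(G,\eta)$, and the admissible gluings of the labelled black $k$-gons \emph{are} precisely the arc-labellings of $G$, so bijectivity is immediate from the decomposition. You instead realize $G$ intrinsically as the color-contraction (quotient) of the cactus, with $\eta$ the white-face contour, and you invert by splitting each vertex of $G$ along the cycles of the transition permutations $\sigma_c$ and invoking the rotation-system correspondence of Lemma~\ref{lem:k-embedding}. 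What the paper's cut-and-glue route buys is that no inverse needs to be checked (polygon-plus-gluing data is manifestly equivalent to the cactus); what your route buys is an intrinsic description of $G$ and transparent degree/color bookkeeping, at the price of having to verify the face-tracing dictionary that you correctly identify as the crux. Two details to pin down when you write this out: first, the white-contour transitions at a merged vertex give the rotation of the recovered vertices read in the direction opposite to the one used for the contour (with your conventions, the cycles of $\sigma_c$ list the hyperedges counterclockwise around each recovered vertex), so each cycle must be inverted before being fed to Lemma~\ref{lem:k-embedding} as a clockwise rotation; second, Lemma~\ref{lem:k-embedding} applies only to \emph{connected} hypergraphs, so you should note that the split hypergraph is connected because $\eta$ lifts to a single closed walk meeting every hyperedge and every split vertex.
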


Lemma~\ref{lem:cactus-to-Eulerian} is illustrated in the top part of Figure~\ref{fig:exBESTthm}.

\begin{proof}
We call \emph{black $k$-gon} a polygon with $k$ vertices of type $1,2,\ldots,k$ in clockwise order, and \emph{white $kn$-gon} a polygon with $kn$ vertices, such that the type of vertices increases by one (modulo $k$) along each edge in counterclockwise order (modulo $k$). A white $kn$-gon is \emph{rooted} if a corner incident to a vertex of type $k$ is distinguished as the \emph{root-corner}; it is $(p_1,\ldots,p_k)$-colored if for all $t\in[k]$ the vertices of type $t$ are colored using every color in $[p_t]$.

Observe that the $n$ hyperedges of a $k$-cactus of size $n$ are black $k$-gons, while its white face is a white $kn$-gon (since faces of cactus are simply connected). Moreover the $k$-cactus is completely determined (up to homeomorphism) by specifying the gluing of the black $k$-gons with the white $kn$-gon (that is specifying the pair of edges to be identified). Thus, a rooted hyperedge-labelled $(p_1,\ldots,p_k)$-colored cactus is obtained by taking a rooted $(p_1,\ldots,p_k)$-colored white $kn$-gon, and gluing its edges to the edges of $n$ labelled black $k$-gon \emph{so as to respect the color and type of the vertices} (certain vertices of the white $kn$-gon are identified by the gluing). Now, a rooted $(p_1,\ldots,p_k)$-colored white $kn$-gon is bijectively encoded by a pair $(\tilde{G},\eta)$, where $\tilde{G}$ is a vertex-labelled $k$-digraph of type $(p_1,\ldots,p_k)$ and $\eta$ is an Eulerian tour of $\tilde{G}$ (the Eulerian tour gives the order of the colors around the white $kn$-gon in counterclockwise direction starting from the root-corner). Moreover, the gluings of the $n$ labelled black $k$-gons (respecting the type and coloring) are in bijection with the arc-labellings of $\tilde{G}$. This establishes the claimed bijection.
\end{proof}

We now recall the BEST Theorem for Eulerian tours\footnote{This Theorem is due to de Bruijn, van Aardenne-Ehrenfest, Smith and Tutte. See \cite[Theorem 5.6.2]{EC2} for a proof.}.
Let $G$ be a directed graph and let $v_0$ be a vertex. We call \emph{$v_0$-Eulerian tour} an Eulerian-tour starting and ending at vertex $v_0$. Observe that a $v_0$-Eulerian tour is completely characterized by its \emph{local-order}, that is, the assignment for each vertex $v$ of the order in which the outgoing edges incident to $v$ are used. Note however that not every local order corresponds to an Eulerian tour. We call \emph{$v_0$-arborescence} a spanning tree $A$ of $G$ oriented from the leaves toward $v_0$ (i.e., every vertex $v\neq v_0$ has exactly one outgoing arc in $A$). 

\begin{lemma}[BEST Theorem]\label{lem:BESTthm}
Let $G$ be an arc-labelled directed graph where every vertex has as many ingoing arcs as outgoing ones, and let $v_0$ be a vertex of $G$. 
A local order corresponds to a $v_0$-Eulerian tour if and only if the set of last outgoing arcs out of the vertices $v\neq v_0$ form a $v_0$-arborescence. Consequently, there is a bijection between the set of $v_0$-Eulerian tours of $G$ and the set of pairs $(A,\tau)$, where $A$ is a $v_0$-arborescence, and $\tau$ is an assignment for each vertex $v$ of a total order of the incident outgoing arcs not in $A$. 
\end{lemma}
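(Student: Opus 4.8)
The plan is to analyze the \emph{greedy walk} generated by a local order: starting at $v_0$, one repeatedly leaves the current vertex along its first unused outgoing arc (in the local order), stopping when the current vertex has no unused outgoing arc. Since the graph is finite and each step consumes a fresh arc, this walk $W$ terminates, and I claim it can only get stuck at $v_0$. Indeed, in any closed walk the number of departures from a given vertex equals the number of arrivals; if $W$ got stuck at some $v\neq v_0$, then all outgoing arcs of $v$ would be used, while the terminal (non-initial) visit to $v$ would be an arrival with no matching departure, forcing the number of arrivals at $v$ to exceed its in-degree (which equals its out-degree) — impossible. Hence $W$ is a closed walk returning to $v_0$ that uses every outgoing arc at $v_0$, and at each vertex the arcs it uses form an \emph{initial segment} of the local order.

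First I would prove the ``only if'' direction: if $W$ is Eulerian, the last outgoing arcs form a $v_0$-arborescence. The set $T$ of last exits assigns to each $v\neq v_0$ exactly one outgoing arc and none to $v_0$, so it suffices to show $T$ is acyclic; being acyclic with one outgoing arc per non-root vertex, it is then automatically a spanning tree oriented toward $v_0$. To rule out a directed cycle in $T$, I would consider the cycle arc that is traversed \emph{latest} by the Eulerian tour: immediately after traversing it we arrive at a non-terminal cycle vertex $w$ (non-terminal since $w\neq v_0$) whose own last exit has already been used, hence all of whose exits are used, so the tour cannot leave $w$ — contradicting that an Eulerian tour leaves every non-terminal vertex.

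The main work is the converse (``if'' direction): I must show that whenever the last outgoing arc at each $v\neq v_0$ equals a fixed $v_0$-arborescence arc $a_v$, the walk $W$ is genuinely Eulerian. Suppose instead the set $X$ of unused arcs is nonempty. Since every vertex has equal in- and out-degree and $W$ is balanced at every vertex, the unused in- and out-degrees coincide at each vertex, so $X$ is itself balanced; moreover $v_0$ uses all its outgoing arcs, hence (by balance) all its incoming ones, so $v_0$ lies outside $X$. Now for any vertex $v$ with an unused outgoing arc, the used arcs forming an initial segment forces its last arc $a_v$ to be unused, so $a_v\in X$; and since the head $w$ of $a_v$ then has an unused incoming arc, it has, by balance, an unused outgoing arc, with $w\neq v_0$. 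Thus the arborescence-parent of any such $v$ is again a vertex of the same kind, contradicting the fact that following arborescence arcs from $v$ reaches $v_0$ in finitely many steps. Hence $X=\emptyset$ and $W$ is Eulerian. I expect this ascent-along-the-arborescence argument, combined with the balance of the unused subgraph, to be the crux of the proof.

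Finally I would assemble the bijection. A $v_0$-Eulerian tour is the same data as a local order whose last exits form a $v_0$-arborescence; to such a tour I associate the pair $(A,\tau)$ where $A$ is the arborescence of last exits and, for each vertex $v$, $\tau$ records the order of the outgoing arcs of $v$ not in $A$ (at $v\neq v_0$ this is the local order with $a_v$ deleted from the end, and at $v_0$ it is the entire local order). Conversely, from $(A,\tau)$ I rebuild the local order by ordering the non-arborescence arcs according to $\tau$ and appending $a_v$ last at each $v\neq v_0$; the ``if'' direction guarantees this yields a $v_0$-Eulerian tour, whose last exits are exactly $A$. These two maps are manifestly mutually inverse, which gives the stated bijection.
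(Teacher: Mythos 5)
Your proof is correct, but there is nothing in the paper to compare it against: the paper does not prove this lemma, it invokes it as the classical BEST theorem of de Bruijn, van Aardenne-Ehrenfest, Smith and Tutte and refers to \cite[Theorem 5.6.2]{EC2} for a proof. What you have written is essentially the standard argument given in that reference, and it is complete. All three steps are sound: (1) the greedy walk driven by a local order can only get stuck at $v_0$, by the in-degree/out-degree balance at every other vertex; (2) for the ``only if'' direction, a directed cycle among the last exits is impossible, since arriving along the latest-traversed cycle arc at a cycle vertex $w\neq v_0$ (note $v_0$ cannot lie on such a cycle, having no last exit) would strand the tour at $w$; (3) for the ``if'' direction, if some arc were unused then the unused arcs form a balanced subgraph avoiding $v_0$, and any vertex with an unused outgoing arc must have its last exit unused (initial-segment property of the greedy walk), so its arborescence-parent again has an unused outgoing arc and differs from $v_0$ --- contradicting that iterating the parent map reaches $v_0$ in finitely many steps. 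The final assembly of the bijection (strip the arborescence arc $a_v$ from the end of the local order at each $v\neq v_0$, keep the full order at $v_0$, and reverse by re-appending) is also correct; it rests on the observation, stated in the paper just before the lemma, that a $v_0$-Eulerian tour is completely determined by its local order.
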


We now complete the proof of Theorem~\ref{thm:bij-tree-rooted}.
By combining Lemma~\ref{lem:cactus-to-Eulerian} and the BEST Theorem, one gets a bijection between rooted hyperedge-labelled $(p_1,\ldots,p_k)$-colored cacti and triples $(G,A,\theta)$ where $G$ is an arc-labelled vertex-labelled $k$-digraph of type $(p_1,\ldots,p_k)$, $A$ is a $v_0$-arborescence of $G$ for a vertex $v_0$ of type $k$, and $\tau$ is an assignment for each vertex $v$ of a total order of the arcs not in $A$ going out of $v$. Observe that $\tau$ encodes the same information as a pair $(a_0,\tau')$, where $a_0$ is an arc going out of $v_0$ and $\tau'$ is an assignment for each vertex $v$ of a cyclic order of the arcs going out of $v$. Now the arc-labelled vertex-labelled $k$-digraph $G$ encodes the same information as a hyperedge-labelled vertex-labelled $k$-hypergraph $G'$, and $\tau'$ can be seen as a rotation system for $G'$. Thus, by Lemma~\ref{lem:k-embedding} the pair $(G,\tau)$ encodes the same information as a rooted hyperedge-labelled vertex-labelled $k$-constellation $C$ of type $(p_1,\ldots,p_k)$ (note that the hypergraph $G'$ is clearly connected since it has an arborescence $A$). Lastly, the $v_0$-arborescence $A$ of $G$ clearly encodes a $v_0$-arborescence of the constellation $C$, where $v_0$ is the root vertex of $C$. 

We thus have obtained a bijection between rooted hyperedge-labelled $(p_1,\ldots,p_k)$-colored cacti and the hyperedge-labelled vertex-labelled tree-rooted constellations. The labelling of the hyperedges can actually be disregarded since there are $n!$ distinct ways of labelling the hyperedges of a rooted constellation of size $n$. This gives the bijection announced in Theorem~\ref{thm:bij-tree-rooted}. Moreover it is easy to check that it has the claimed \emph{degree preserving property}. \hfill $\square$

%%%%%%%%%%%%%%%%%%%%%%%%%%%%%%%%%%%%%%%%%%%%%%%%%%%%%%%%%%%%%%%
%%%%%%%%%%%%%%%%%%%%%%%%%%%%%%%%%%%%%%%%%%%%%%%%%%%%%%%%%%%%%%%

\section{Symmetries for tree-rooted constellations} \label{sec:symmetry}
In this section we prove that for vertex-labelled tree-rooted constellations of a given type $(p_1,\ldots,p_k)$, every vertex-compositions is equally likely. This together with Theorem~\ref{thm:bij-tree-rooted} proves the symmetry property stated in Theorem~\ref{thm:symmetry}.

We denote by $\mT_{\ga^{(1)},\ldots,\ga^{(k)}}$ the set of vertex-labelled tree-rooted constellations of vertex-compositions $(\ga^{(1)},\ldots,\ga^{(k)})$. %We are going to prove the following Theorem.

\begin{thm}\label{thm:sym-tree-rooted}
If $\ga^{(1)},\ldots,\ga^{(k)},{\de}^{(1)},\ldots,\de^{(k)}$ are compositions of $n$ such that $\ell(\ga^{(t)})=\ell(\de^{(t)})$ for all $t\in[k]$, then the sets $\mT_{\ga^{(1)},\ldots,\ga^{(k)}}$ and $\mT_{\de^{(1)},\ldots,\de^{(k)}}$ are in bijection. 
\end{thm}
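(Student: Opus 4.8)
The plan is to prove a stronger, type-by-type statement and then compose. Since changing several compositions at once can be done one type at a time, it suffices to fix a single index $t_0\in[k]$ and exhibit a bijection between $\mT_{\ga^{(1)},\ldots,\ga^{(k)}}$ and $\mT_{\de^{(1)},\ldots,\de^{(k)}}$ under the assumption that $\ga^{(t)}=\de^{(t)}$ for all $t\ne t_0$ and $\ell(\ga^{(t_0)})=\ell(\de^{(t_0)})=p_{t_0}$. Composing the $k$ bijections obtained by varying $t_0$ from $1$ to $k$ then yields the theorem.

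Two easy reductions organize the type-$t_0$ case. First, relabelling the $p_{t_0}$ vertices of type $t_0$ by a permutation $\sigma$ is a bijection of vertex-labelled tree-rooted constellations that permutes the parts of $\ga^{(t_0)}$ by $\sigma$ and leaves all other compositions unchanged; hence $|\mT_{\ga^{(1)},\ldots,\ga^{(k)}}|$ depends only on the \emph{multiset} of parts of $\ga^{(t_0)}$. Second, any two compositions of $n$ with $p_{t_0}$ positive parts are connected by a sequence of \emph{unit transfers}, i.e.\ moves replacing a pair of parts $(a,b)$ with $b\ge 2$ by $(a+1,b-1)$. Thus it is enough to realise a single unit transfer between two prescribed type-$t_0$ vertices by a bijection.

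The core construction I would use is a \emph{linearisation} of the type-$t_0$ vertices along the arborescence. Given $(C,A)\in\mT_{\ga^{(1)},\ldots,\ga^{(k)}}$, each non-root vertex of type $t_0$ carries a distinguished corner, namely the one just before its (unique) outgoing arborescence edge of type $t_0$; the root vertex (relevant only when $t_0=k$) carries the root corner. Cutting each type-$t_0$ vertex at its distinguished corner turns the $p_{t_0}$ cyclic lists of incident hyperedges into $p_{t_0}$ linear lists, and the arborescence together with the root organises these lists into one canonical sequence of all $n$ hyperedges incident to type-$t_0$ vertices. The upshot I want is a bijection
$$\mT^n_{(p_1,\ldots,p_k)}\ \longleftrightarrow\ \mathcal{R}\times\{\text{compositions of }n\text{ with }p_{t_0}\text{ parts}\},$$
under which the composition coordinate is exactly $\ga^{(t_0)}$ and the ``reduced datum'' in $\mathcal{R}$ records everything else (the other vertices, the remaining arborescence edges, and the canonical sequence) and is \emph{independent} of $\ga^{(t_0)}$. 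Given such a bijection, $|\mT_{\ldots,\ga^{(t_0)},\ldots}|=|\mathcal{R}|$ for every composition with $p_{t_0}$ parts, which is precisely the desired invariance (and in fact realises every unit transfer by simply sliding one divider in the canonical sequence).

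The main obstacle is to make the linearisation simultaneously well defined, reversible, and structure preserving. The delicate points are: (i) cutting and re-splicing cyclic orders changes the underlying map, so I must check via Lemma~\ref{lem:k-embedding} that the reconstructed rotation system is a valid $k$-constellation of the prescribed type; (ii) the re-split tree must again be a $v_0$-arborescence, i.e.\ each non-root vertex of each type still has exactly one outgoing edge of its type --- this is what forces the distinguished corner to be the one adjacent to the parent edge, and is the reason the choice of cut points is free; and (iii) the canonical sequence must be reconstructible from the reduced datum alone, independently of where the $p_{t_0}$ dividers fall. I expect the BEST-theorem viewpoint of Section~\ref{sec:tree-rooted} (encoding $(C,A)$ by a local order of outgoing arcs with the arborescence as the last-arc certificate) to be the right language for checking these points, since there the type-$t_0$ structure is exactly a distribution of the $n$ type-$t_0$ arcs into $p_{t_0}$ cyclically ordered, parent-marked groups, and a unit transfer is a transparent move on that data.
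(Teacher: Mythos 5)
Your two reductions are fine and in fact match the paper's own strategy: the paper also proves the theorem by realizing, for a single type $t$ and a pair of labels $i,j$, a unit transfer of hyperdegree via a bijection $\varphi_{t,i,j}$. The gap is your core step, the claimed product decomposition $\mT^n_{(p_1,\ldots,p_k)}\cong\mathcal{R}\times\{\hbox{compositions of $n$ with $p_{t_0}$ parts}\}$ with freely slidable dividers. You list its verification (well-definedness, preservation of the arborescence, reconstructibility of the canonical sequence) as ``obstacles'' you expect the BEST viewpoint to settle, but these points are the entire content of the theorem, and the construction as described actually fails. The danger is not at the cut corners of the type-$t_0$ vertices themselves, where your point (ii) focuses, but at the arborescence edges of type $t_0-1$ whose \emph{heads} are type-$t_0$ vertices: when a hyperedge $h$ changes block, any marked edge of type $t_0-1$ lying on $h$ is redirected to the new block, and if that edge lies on the arborescence path from the receiving block to the root, the re-spliced marked edge set contains a cycle, so it is no longer a $v_0$-arborescence.

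Concretely, take $k=2$, $n=4$, hyperedges $a,b,c,d$, type-$2$ vertices $v_0$ (the root, incident to $a,b$) and $w$ (incident to $c,d$), type-$1$ vertices $u_1$ (incident to $a,c$) and $u_2$ (incident to $b,d$), any rotation system, root hyperedge $a$, and arborescence $u_1\to v_0$ via $a$, $u_2\to v_0$ via $b$, $w\to u_1$ via the type-$2$ edge of $c$. Linearising type-$1$ vertices with the parent hyperedge first gives the canonical sequence $(a,c,b,d)$ (or $(b,d,a,c)$). Sliding the divider to split as $[a]\,[c,b,d]$ produces a block whose parent edge (the type-$1$ edge of its first hyperedge $c$) points to $w$, while $w$'s unchanged parent edge (the type-$2$ edge of $c$) points back to the block containing $c$, i.e.\ to that same block: the marked edges contain a $2$-cycle. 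One checks that in this example \emph{every} choice of canonical sequence and every positional rule for reading off the parent (first element, last element, singleton blocks, etc.) admits some divider placement creating such a cycle --- a divider immediately before $c$, or isolating $c$, always does. This is exactly the difficulty that forces the paper's proof into two cases: $\varphi_{t,i,j}$ performs the naive hyperedge move only when the relevant edge $e_i$ is \emph{not} on the arborescence path from the receiving vertex to the root (case $\mT_{t,i,j}'$), and otherwise (case $\mT_{t,i,j}''$) performs a genuinely different surgery that swaps the labels of $u_i$ and $u_j$ and redistributes all their hyperedges --- an operation with no counterpart in a ``slide one divider'' picture. So your outline cannot be completed as stated; any repair must incorporate a case distinction of this kind, which is precisely what your proposal defers.
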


\noindent \textbf{Remark.} Theorem~\ref{thm:sym-tree-rooted} gives the hope of counting tree-rooted constellations of given type, by looking at the simplest possible vertex-compositions. For instance, one can try to enumerate the set $\mT_{\ga^{(1)},\ldots,\ga^{(k)}}$ where $\ga^{(t)}=(n-p_t+1,1,1,\ldots,1)$ for all $t\in[k]$ (similar ideas lead to a very easy way of counting $k$-cacti embedded in the sphere \cite{OB-AM:planar-cacti}). However, our efforts in this direction only led to a restatement of Jackson counting formula as a probabilistic puzzle similar to Theorem~\ref{thm:tree-puzzle} which we could not easily solve for $k\geq 3$.

%Theorem~\ref{thm:sym-tree-rooted} is particularly easy to prove in the case of planar tree-rooted maps.
%The Theorem~\ref{thm:sym-tree-rooted} is particularly easy to prove when $k=2$ (in that case $k$-constellations can be identified with rooted bipartite map with a distinguished spanning tree). 

\begin{proof}
Let $t\in[k]$ and $i,j\in[p_t]$. In order to prove Theorem~\ref{thm:sym-tree-rooted} it suffices to exhibit a bijection $\varphi_{t,i,j}$ between $\mT_{\ga^{(1)},\ldots,\ga^{(k)}}$ and $\mT_{\de^{(1)},\ldots,\de^{(k)}}$ when $\ga^{(s)}={\de}^{(s)}$ for all $s\neq t$, $\ga^{(t)}_x=\de^{(t)}_x$ for all $x\neq i,j$, $\ga^{(t)}_i-1=\de^{(t)}_i$ and $\ga^{(t)}_j+1=\de^{(t)}_j$. In other words, we want to construct a bijection $\varphi_{t,i,j}$ which decreases by one the hyperdegree of the vertex of type $t$ labelled $i$ and increases by one the hyperdegree of the vertex of type $t$ labelled $j$. Recall from Lemma~\ref{lem:k-embedding} that a $k$-constellation is defined by a (connected) $k$-hypergraph together with a rotation system (clockwise order of hyperedges around the vertices); therefore it is well defined to \emph{unglue} a hyperedge from a vertex of type $t$ and \emph{reglue} it in a specified corner of another vertex of type $t$. We will use these operations to define the mapping $\varphi_{t,i,j}$ below; see Figure~\ref{fig:varphi}. 

\begin{figure}[h] 
\begin{center} 
\includegraphics[width=\linewidth]{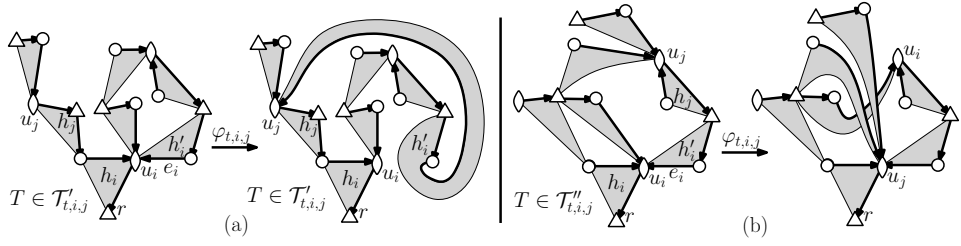} 
\caption{The bijection $\varphi_{t,i,j}$ applied to a tree-rooted constellation in $\mT'_{t,i,j}$ (left), or in $\mT''_{t,i,j}$ (right). The tree-rooted $k$-constellations are represented as $k$-hypergraphs together with a rotation system (so the overlappings of the hyperedges in this figure are irrelevant).}\label{fig:varphi}
\end{center} 
\end{figure}

Let $\mT_{t,i}$ be the set of vertex-labelled tree-rooted constellations of type $(p_1,\ldots,p_k)$ such that the vertex of type $t$ labelled $i$ has hyperdegree at least two. Let $T$ be a tree-rooted constellation in $\mT_{t,i}$, let $u_i$ and $u_j$ be the vertices of type $t$ labelled $i$ and $j$ respectively, let $r$ be the root vertex, and let $A$ be the marked $r$-arborescence. 
If $u_i\neq r$ we denote by $h_i$ be hyperedge incident to the edge joining $u_i$ to its parent in $A$, while if $u_i=r$ we denote by $h_i$ the the root hyperedge. We define $h_j$ similarly. Let $h_i'$ be the hyperedge preceding $h_i$ in clockwise order around $u_i$ and let $e_i$ be the edge of type $t-1$ incident to $h_i'$. Observe that $h_i\neq h_i'$ since the hyperdegree of $u_i$ is at least two. 

In order to define the mapping $\varphi_{t,i,j}$ we need to consider two cases which are illustrated in Figure~\ref{fig:varphi}. We first define a partition $\mT_{t,i}=\mT_{t,i,j}'\cup \mT_{t,i,j}''$ by declaring that $T$ is in $\mT_{t,i,j}'$ if the edge $e_i$ is not on the path from $u_j$ to the root vertex $r$ in the arborescence $A$, and that $T$ is in $\mT_{t,i,j}''$ otherwise. Suppose first that $T$ is in $\mT_{t,i,j}'$. In this case we define $\varphi_{t,i,j}(T)$ as the constellation (with marked edges) obtained from the tree-rooted constellation $T$ (with marked edges corresponding to the arborescence $A$) by ungluing the hyperedge $h_i'$ from $u_i$ and gluing it to $u_j$ in the corner preceding the hyperedge $h_j$ in clockwise order around $u_j$; see Figure~\ref{fig:varphi}(a). Observe that $\varphi_{t,i,j}(T)$ is a tree-rooted constellation (in particular the marked edges form an $r$-arborescence $A'$ of $\varphi_{t,i,j}(T)$). Moreover $\varphi_{t,i,j}(T)$ is in $\mT_{t,j}$ and more precisely in $\mT_{t,j,i}'$. It is also easy to see that $\varphi_{t,j,i}(\varphi_{t,i,j}(T))=T$. Suppose now that $T$ is in $\mT_{t,i,j}''$. In this case we define $\varphi_{t,i,j}(T)$ as the constellation (with marked edges) obtained from $T$ (with marked edges corresponding to the arborescence $A$) as follows: we unglue all the hyperedges incident to $u_i$ except $h_i$ and $h_i'$, we unglue all the hyperedges incident to $u_j$ except $h_j$, we reglue the hyperedges unglued from $u_j$ to $u_i$ in the corner preceding $h_i'$ in clockwise order around $u_i$ (without changing their clockwise order), we reglue the hyperedges unglued from $u_i$ to $u_j$ (in the unique possible corner), and lastly we exchange the labels $i$ and $j$ of the vertices $u_i$ and $u_j$; see Figure~\ref{fig:varphi}(b). It is easy to see that $\varphi_{t,i,j}(T)$ is a tree-rooted constellation (in particular the marked edges form an $r$-arborescence of $\varphi_{t,i,j}(T)$). Moreover $\varphi_{t,i,j}(T)$ is in $\mT_{t,j}$ and more precisely in $\mT_{t,j,i}''$. It is also easy to see that $\varphi_{t,j,i}(\varphi_{t,i,j}(T))=T$. 

We have shown that $\varphi_{t,i,j}$ is a mapping from $\mT_{t,i}$ to $\mT_{t,j}$. Moreover $\varphi_{t,j,i}\circ\varphi_{t,i,j}=Id$ for all $i,j$, thus $\varphi_{t,i,j}=\varphi_{t,j,i}^{-1}$ is a bijection. Lastly, the bijection $\varphi_{t,i,j}$ decreases by one the hyperdegree of the vertex of type $t$ labelled $i$ and increases by one the degree of the vertex of type $t$ labelled $j$. Thus $\varphi_{t,i,j}$ has all the claimed properties.
\end{proof}

%%%%%%%%%%%%%%%%%%%%%%%%%%%%%%%%%%%%%%%%%%%%%%%%%%%%%%%%%%%%%%%
%%%%%%%%%%%%%%%%%%%%%%%%%%%%%%%%%%%%%%%%%%%%%%%%%%%%%%%%%%%%%%%

\section{From tree-rooted constellations to nebulas}\label{sec:nebulas}
In Section~\ref{sec:tree-rooted} we obtained a bijection between vertex-colored cacti and tree-rooted constellations. 
In this section we take another look at tree-rooted constellations by characterizing their \emph{duals}. We eventually obtain a bijection between a class of tree-rooted constellations and certain decorated maps with a single face called \emph{nebulas}.

We call \emph{tree-pointed $k$-constellation} a pair $(C,A)$, where $C$ is a rooted $k$-constellation and $A$ is a $v_0$-arborescence for some vertex $v_0$ which can be distinct from the the root-vertex of $C$. If the constellation $C$ has type $(p_1,\ldots,p_k)$ and $v_0$ has type $t$, then the tree-pointed $k$-constellation $(C,A)$ is said to have \emph{reduced type} $(p_1',\ldots,p_k')$, where $p_t'=p_t-1$ and $p_s'=p_s$ for $s\neq t$. Observe that the arborescence $A$ has $p_t'$ edges of type $t$ for all $t\in [k]$ (because for every vertex $v\neq v_0$ of type $t$ the edge of $A$ joining $v$ to its parent has type $t$).
\begin{lemma}\label{lem:pointing}
There is a $1$-to-$\prod_{i=1}^k p_i!$ correspondence between tree-pointed $k$-constellations of reduced type $(p_1,\ldots,p_k)$, and the union $\mT^n_{p_1+1,p_2,\ldots,p_k } \cup \mT^n_{p_1,p_2+1,\ldots,p_k } \cup \cdots \cup \mT^n_{p_1,p_2,\ldots,p_k+1}$ of vertex-labelled tree-rooted constellations. 
\end{lemma}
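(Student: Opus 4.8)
The plan is to establish the cardinality identity $|\mathcal{U}|=\big(\prod_{i=1}^k p_i!\big)\,|\mathcal{P}|$, where $\mathcal{P}$ denotes the set of tree-pointed constellations of reduced type $(p_1,\ldots,p_k)$ and $\mathcal{U}=\bigcup_{t=1}^k\mT^n_{p_1,\ldots,p_t+1,\ldots,p_k}$, by a weighted count whose single substantial ingredient is that the number of $v_0$-arborescences of a constellation is independent of the apex $v_0$. The first step is to reinterpret arborescences in digraph terms: given a constellation $C$, let $D_C$ be the directed graph on the vertex set of $C$ with one arc per edge of $C$, each type-$t$ edge oriented from its type-$t$ endpoint to its type-$(t+1)$ endpoint. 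At a vertex of type $t$ and hyperdegree $d$, each of the $d$ incident hyperedges contributes exactly one outgoing arc (of type $t$) and one incoming arc (of type $t-1$), so every vertex of $D_C$ has equal in- and out-degree; as $C$ is connected, $D_C$ is Eulerian. Under this identification the condition defining a $v_0$-arborescence becomes exactly that $A$ is a spanning in-tree of $D_C$ rooted at $v_0$; in particular $v_0$ is the unique sink of $A$ and is therefore determined by $A$ alone.

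The second step records the key symmetry: the number $a(C)$ of $v_0$-arborescences of $C$ is the same for every vertex $v_0$. By the BEST Theorem (Lemma~\ref{lem:BESTthm}) applied to $D_C$, the number of $v_0$-Eulerian tours equals $a(C,v_0)\cdot\deg^+(v_0)!\prod_{v\neq v_0}(\deg^+(v)-1)!$, since the $\deg^+(v_0)$ outgoing arcs at $v_0$ lie outside any $v_0$-arborescence while every other vertex contributes $\deg^+(v)-1$ free arcs. On the other hand, each $v_0$-Eulerian tour arises from a cyclic Eulerian circuit by selecting one of the $\deg^+(v_0)$ passages through $v_0$ as its origin, so this number is also $\deg^+(v_0)$ times the (intrinsic) number of cyclic Eulerian circuits of $D_C$. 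Comparing the two expressions shows that $a(C,v_0)\prod_v(\deg^+(v)-1)!$ equals the number of cyclic Eulerian circuits, which does not depend on $v_0$; since the product is likewise independent of $v_0$, neither is $a(C,v_0)=:a(C)$.

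With these facts the bookkeeping is routine. Fix $t\in[k]$, let $C$ range over unlabelled rooted constellations of type $(p_1,\ldots,p_t+1,\ldots,p_k)$, and set $N_t=\sum_C a(C)$. On the tree-rooted side the apex is forced to be the root vertex, so the unlabelled tree-rooted constellations of this type number exactly $N_t$; since a rooted constellation (hence a tree-rooted one) has no nontrivial automorphism, adding vertex-labels multiplies by $\prod_{s\neq t}p_s!\cdot(p_t+1)!$, whence $|\mT^n_{p_1,\ldots,p_t+1,\ldots,p_k}|=\prod_{s\neq t}p_s!\,(p_t+1)!\,N_t$. On the tree-pointed side the apex may be any of the $p_t+1$ vertices of type $t$, and by the apex-independence each choice carries $a(C)$ arborescences, so the tree-pointed constellations of reduced type $(p_1,\ldots,p_k)$ with apex of type $t$ number $(p_t+1)N_t$. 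The ratio is $\prod_{s\neq t}p_s!\,(p_t+1)!/(p_t+1)=\prod_{s=1}^k p_s!$, independent of $t$; summing over $t$ gives $|\mathcal{U}|=\big(\prod_{i=1}^k p_i!\big)|\mathcal{P}|$, which is the asserted $1$-to-$\prod_i p_i!$ correspondence.

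The main obstacle is precisely the apex-independence of the arborescence count: the natural forgetful map from the tree-rooted side to the tree-pointed side would have to move the apex from the type-$k$ root vertex to a vertex of type $t$, and no constellation-preserving bijection achieves this, which is why the statement is an equality of cardinalities rather than an explicit map and why its crux is the BEST-Theorem symmetry. The remaining points are quick to verify: that the apex is determined by the arborescence (so that summing over apexes of a given type is over disjoint sets) and that rooted constellations are rigid (so that the label count is exactly the stated product of factorials).
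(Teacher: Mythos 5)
Your proof is correct and follows essentially the same route as the paper: orient the type-$t$ edges toward their type-$(t+1)$ endpoints to get an Eulerian digraph, invoke the apex-independence of the arborescence count, and then do the same bookkeeping comparing $(p_t+1)$ choices of a distinguished type-$t$ vertex against the $(p_t+1)\prod_i p_i!$ vertex-labellings. The only difference is cosmetic: the paper cites the apex-independence as a known corollary of the BEST Theorem (\cite[Cor 5.6.3]{EC2}), whereas you re-derive it by double-counting Eulerian tours against cyclic Eulerian circuits.
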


\begin{proof}
We first claim that for any constellation $C$ and any vertices $u,v$, there are as many $u$-arborescences as $v$-arborescences. 
Indeed, if one orients the edges of type $t$ of $C$ toward their endpoint of type $t+1$, one gets a \emph{Eulerian digraph} (oriented graph with as many ingoing and outgoing edges at each vertex). Moreover it is an easy corollary of the BEST Theorem that Eulerian digraphs have the same number arborescences directed toward each vertex (see \cite[Cor 5.6.3]{EC2}). Thus, the tree-pointed constellations of reduced type $(p_1,\ldots,p_k)$ such that the root vertex of the arborescence has type $t$ are equinumerous to the tree-rooted constellations of type $(p_1,\ldots,p_t+1,\ldots,p_k)$ with a distinguished vertex of type $t$. Since there are $(p_t+1)$ ways of distinguishing a vertex of type $t$ in such a tree-rooted constellation versus $(p_t+1)\prod_{i=1}^k p_i!$ ways of labelling its vertices, one gets the claimed correspondence.
\end{proof}

We now consider the dual of constellations. Recall that the \emph{dual} of a map $M$ is the map $M^*$ obtained by placing a vertex of $M^*$ in each face of $M$ and drawing an edge of $M^*$ across each edge of $M$. Duality is a genus preserving involution on maps such that the vertices, edges and faces of $M$ correspond respectively to the faces, edges and vertices of $M^*$. We now describe the dual of $k$-constellations. Observe that $k$-constellations can be characterized as the maps with black and white faces, and $k$ types of edges $1,2,\ldots,k$ such that 
\begin{compactitem}
\item[(i)] each edge separates a black and a white face, 
\item[(ii)] each black face has degree $k$, 
\item[(iii)] in clockwise order around a hyperedge (resp. white face) the type of the edges increases (resp. decreases) by one from one edge to the next.
\end{compactitem}
Indeed, with the preceding conditions, for each vertex $v$ there exists $t\in[k]$ such that the edges incident to $v$ are alternatingly of type $t-1$ and $t$ for some $t$ in $[k]$ (and we can thus say that $v$ has type $t$ in this case). With the previous characterization, it is clear that duality gives a bijection between $k$-constellations and the \emph{dual-constellations}, which are defined as the maps with black and white vertices, and $k$ types of edges $1,2,\ldots,k$ such that 
\begin{compactitem}
\item[(i)] each edge joins a black and a white vertex 
\item[(ii)] each black vertex has degree $k$,
\item[(iii)] in clockwise order around a black vertex (resp. white vertex) the type of the edges increases (resp. decreases) by one from one edge to the next. 
\end{compactitem}

We will now describe a class of maps closely related to the dual of tree-rooted constellations. A \emph{bud} is a dangling half-edge, that is, a half-edge which is not part of a complete edge. 
We define a $k$\emph{-nebula}, or \emph{nebula} for short, as a map having a \emph{single face} with black and white vertices, $k$ types of edges $1,\ldots,k$, and $k$ types of buds $1,\ldots,k$ such that 
\begin{compactitem}
\item[(i)] each edge joins a black and a white vertex,
\item[(ii)] each black vertex has degree $k$, 
\item[(iii)] in clockwise order around a black vertex (resp. white vertex) the type of the edges or buds increases (resp. decreases) by one from one half-edge to the next,
\item[(iv)] for all $t\in[k]$ the number of buds of type $t$ incident to black vertices is equal to the number of buds of type $t$ incident to white vertices. 
%\item[(iv)] the number of buds incident to black vertices called \emph{outbuds} is equal to the number of buds incident to white vertices called \emph{inbuds}.
\end{compactitem}
A 3-nebula is shown in Figure~\ref{fig:dual-opening} (right).
A nebula is \emph{rooted} if one of the black vertices is distinguished as the \emph{root vertex}. We call \emph{black buds} and \emph{white buds} respectively the buds incident to black and to white vertices. A $k$-nebula is said to have \emph{size} $n$ and \emph{type} $(p_1,\ldots,p_k)$ if it has $n$ black vertices and $p_t$ black buds of type $t$ for all $t\in[k]$. 

Consider a $k$-constellation $C$ and a spanning tree $A$. We call \emph{dual-opening} of $(C,A)$ the map with buds $N$ obtained from the dual-constellation $C^*$ by cutting in two halves the edges of $C^*$ which are crossing the edges of the tree $A$ (the edges of $C^*$ of type $t$ crossing $A$ gives two buds of type $t$). We call \emph{root vertex} of $N$ the dual of the root hyperedge of $C$ . The dual-opening of a tree-pointed constellation $(C,A)$ is represented in Figure~\ref{fig:dual-opening}.\\

\begin{figure}[h] 
\begin{center} 
\includegraphics[width=\linewidth]{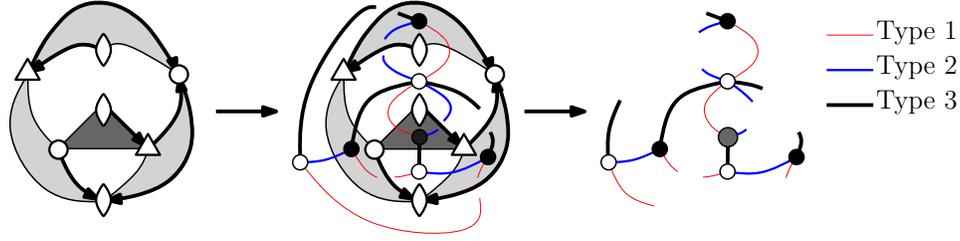} 
\caption{Dual-opening of a tree-pointed constellation. Here the tree-constellation is embedded in the sphere (hence so is its dual opening). The types of edges are indicated by their thicknesses. The root-hyperedge is indicated in darker shade, and the corresponding root-vertex of the nebula is represented in gray.} \label{fig:dual-opening}
\end{center} 
\end{figure} 

We now state the main result of this section.

\begin{theorem}\label{thm:bij-nebulas}
The dual-opening gives a bijection between tree-pointed constellations of reduced type $(p_1,\ldots,p_k)$ and rooted nebulas of type $(p_1,\ldots,p_k)$.
\end{theorem}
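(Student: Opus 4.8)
The plan is to show that dual-opening is a well-defined map into rooted nebulas of the correct type, and then to construct an explicit inverse. The guiding principle is that duality converts the defining data of a constellation into the defining data of a dual-constellation, and that ``cutting along the spanning tree'' converts a genus-$g$ one-vertex-class object into a single-face object, exactly as in the classical correspondence between tree-rooted maps and one-face maps.

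\textbf{Step 1: Dual-opening produces a valid nebula.} Start from a tree-pointed constellation $(C,A)$ of reduced type $(p_1,\ldots,p_k)$, and pass to the dual-constellation $C^*$, which by the characterization given just before the theorem is a map with black and white vertices and $k$ edge-types satisfying conditions (i)--(iii). The hyperedges and non-root vertices of $C$ become black and white vertices of $C^*$, and the edges of $A$ become a distinguished set of edges of $C^*$. I would first verify that cutting each edge of $C^*$ crossing $A$ into two buds yields a map $N$ with a \emph{single face}: since $A$ is a spanning tree of $C$, the edges of $C^*$ dual to $A$ form a spanning tree of $C^*$ in the dual sense (equivalently, the edges of $C^*$ \emph{not} crossing $A$ form the complement of a spanning tree), and contracting/cutting along a spanning tree of the dual is the standard operation merging all faces into one. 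Concretely, cutting the $|A|$ dual edges increases the number of faces by exactly $|A|$ from a connected configuration, and because $A$ spans, the result has a single face. Conditions (i)--(iii) for $N$ are inherited directly from $C^*$ since cutting an edge of type $t$ into two buds of type $t$ preserves the clockwise type-increasing (resp. type-decreasing) pattern around black (resp. white) vertices. Condition (iv) follows because each cut edge of type $t$ produces exactly one black bud and one white bud of type $t$, so the two counts are automatically equal.

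\textbf{Step 2: Type bookkeeping.} I would then check the type is preserved. The black vertices of $N$ are the hyperedges of $C$, so there are $n$ of them; thus $N$ has size $n$. For the number of black buds of type $t$: each edge of $A$ of type $t$ crosses a dual edge of type $t$, producing one black bud of type $t$ (incident to a hyperedge/black vertex) and one white bud. As observed in the paragraph preceding Lemma~\ref{lem:pointing}, the arborescence $A$ of a tree-pointed constellation of reduced type $(p_1,\ldots,p_k)$ has exactly $p_t$ edges of type $t$ for each $t\in[k]$. Hence $N$ has $p_t$ black buds of type $t$, so $N$ has type $(p_1,\ldots,p_k)$ as required. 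The root-hyperedge of $C$ becomes the root (black) vertex of $N$, so $N$ is rooted.

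\textbf{Step 3: The inverse map.} For surjectivity and injectivity I would describe the reverse procedure: given a rooted nebula $N$, reconstruct $(C,A)$ by \emph{closing} the buds and dualizing. The key point is that condition (iv) guarantees the buds can be matched --- one pairs each black bud of type $t$ with a white bud of type $t$ --- but one must show there is a \emph{unique} matching that produces a map with the right structure (namely whose ``closure'' has the edges of the closed buds forming a spanning tree, so that dualizing recovers a genuine constellation together with its arborescence $A$). Here I would invoke the standard fact used in the tree-rooted/one-face map correspondence: on a single-face map with buds, there is a canonical way to close the buds, obtained by reading the unique face boundary and matching buds according to the order in which they are encountered, and this canonical closure is forced once the root is fixed. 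I would argue that this closure is well-defined using the single-face property together with the type conditions (ii)--(iii), which constrain which buds can be adjacent around the boundary; the root vertex provides the basepoint needed to make the matching canonical. The closed edges then cross a spanning tree of the resulting dual-constellation, whose dual is the desired arborescence $A$, and the root vertex of $N$ dualizes back to the root-hyperedge of $C$.

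\textbf{Main obstacle.} The principal difficulty I anticipate is Step~3, specifically proving that the bud-closure is canonical and yields a \emph{spanning tree} (hence an arborescence after dualizing) rather than an arbitrary acyclic or cyclic subgraph. Checking that dual-opening lands in nebulas (Steps 1--2) is essentially a verification of local conditions, but establishing the bijection requires the global argument that the single-face condition (iv) plus the type structure force a unique valid reconstruction. I expect this to rely on the Euler-characteristic/genus bookkeeping --- counting vertices, edges, buds and the single face of $N$ and matching it against the genus of $C$ --- to confirm that the closed buds number exactly $|A|$ and that closing them cannot create a second face nor disconnect the map, which is precisely the content of ``the closed edges form a spanning tree of the dual.''
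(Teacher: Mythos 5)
Your overall strategy---show that dual-opening lands in nebulas of the right type, then invert it by a canonical closure of the buds---is the same as the paper's, and your Steps 1--2 are essentially the paper's Lemma~\ref{lem:opening} together with the edge count recorded before Lemma~\ref{lem:pointing}. (One slip there: cutting the dual edges crossed by $A$ \emph{merges} the $|A|+1$ faces of $C^*$ into a single face, so the face count decreases by $|A|$ rather than increases; and you also need the acyclicity of $A$ to see that this unique face is simply connected, i.e.\ that $N$ is really a map.)

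The genuine gap is Step 3, in three respects. First, the matching of buds is \emph{not} ``forced once the root is fixed'': read from the root, the cyclic sequence of buds is in general not a parenthesis system --- by the remark following the theorem, the nebulas whose bud sequence is balanced from the root are exactly those coming from tree-\emph{rooted} constellations, and the whole reason the paper works with tree-\emph{pointed} constellations is to drop that condition. A basepoint-dependent (stack-type) matching from the root can therefore fail; the correct closure is the root-independent cyclic rule (a white bud matches a black bud when no bud lies between them clockwise, applied recursively), which exhausts all buds by condition (iv). Second, Euler-characteristic bookkeeping cannot resolve your ``main obstacle'': knowing the genus and the number of glued edges says nothing about whether that particular edge set is connected and acyclic in the dual, nor about the arborescence condition that every vertex $v\neq v_0$ of type $t$ is incident to exactly one tree edge of type $t$. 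The paper's Lemma~\ref{lem:dual-closure} proves this by a dynamic argument: orient each glued edge from its white bud to its black bud, record for each face created during the recursive gluing its ``closing edge'', observe that from any face one reaches the original face $f_0$ by repeatedly crossing closing edges (this gives connectivity, hence tree-ness by edge count), and finally use the clockwise type condition around black vertices to obtain the orientation toward $v_0$, the dual of $f_0$ --- which, note, need not be the root vertex. Third, you never verify that closure inverts opening, i.e.\ that the canonical closure re-glues exactly the pairs of buds obtained by cutting each dual edge crossing $A$ (the identity $\Delta\circ\Lambda=\mathrm{id}$, needed for injectivity); the paper proves this by induction on the size of the subtree of $A$ hanging below the cut edge. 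As written, your Step 3 is a plan whose key assertions remain unproved, and the one concrete technique you propose for them (genus counting) is insufficient.
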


Theorems~\ref{thm:bij-tree-rooted},~\ref{thm:bij-nebulas} and Lemma~\ref{lem:pointing} immediately imply the following result.

\begin{corollary} \label{cor:bij-nebulas}
The nebulas of size $n$ and type $(p_1,\ldots,p_k)$ are in $1$-to-$\prod_{i=1}^k p_i!$ correspondence with the disjoint union
$\mC^n_{p_1+1,p_2,\ldots,p_k } \cup \mC^n_{p_1,p_2+1,\ldots,p_k } \cup \cdots \cup \mC^n_{p_1,p_2,\ldots,p_k+1}$ of colored cacti. 
\end{corollary}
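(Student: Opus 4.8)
The plan is to obtain the stated correspondence by composing the three results quoted just above, keeping careful track of the multiplicative factors. First I would pass from rooted nebulas to tree-pointed constellations: by Theorem~\ref{thm:bij-nebulas} the dual-opening is a bijection, so each rooted nebula $N$ of type $(p_1,\ldots,p_k)$ arises from a unique tree-pointed constellation $(C,A)$ of reduced type $(p_1,\ldots,p_k)$, and conversely. This step is one-to-one and contributes no multiplicity.

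Next I would invoke Lemma~\ref{lem:pointing}, which gives a $1$-to-$\prod_{i=1}^k p_i!$ correspondence between tree-pointed constellations of reduced type $(p_1,\ldots,p_k)$ and the disjoint union $\mT^n_{p_1+1,p_2,\ldots,p_k}\cup\mT^n_{p_1,p_2+1,\ldots,p_k}\cup\cdots\cup\mT^n_{p_1,p_2,\ldots,p_k+1}$ of vertex-labelled tree-rooted constellations. Thus to each tree-pointed constellation of reduced type $(p_1,\ldots,p_k)$ there correspond exactly $\prod_{i=1}^k p_i!$ vertex-labelled tree-rooted constellations, each living in exactly one of the $k$ terms $\mT^n_{p_1,\ldots,p_t+1,\ldots,p_k}$.

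Finally, for each $t\in[k]$, Theorem~\ref{thm:bij-tree-rooted} furnishes a bijection $\Phi$ between the colored cacti $\mC^n_{p_1,\ldots,p_t+1,\ldots,p_k}$ and the tree-rooted constellations $\mT^n_{p_1,\ldots,p_t+1,\ldots,p_k}$. Applying $\Phi^{-1}$ separately on each term of the above union turns it into the disjoint union $\mC^n_{p_1+1,p_2,\ldots,p_k}\cup\cdots\cup\mC^n_{p_1,p_2,\ldots,p_k+1}$ of colored cacti without changing any cardinality. Composing the three steps then yields the announced $1$-to-$\prod_{i=1}^k p_i!$ correspondence between nebulas of type $(p_1,\ldots,p_k)$ and this disjoint union of colored cacti.

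Since the first and third steps are genuine bijections and the entire multiplicity of $\prod_{i=1}^k p_i!$ is produced by the pointing step of Lemma~\ref{lem:pointing}, there is no substantive difficulty here; the statement is indeed an immediate consequence of the three cited results. The only point requiring attention is the bookkeeping: one must check that the $k$ terms $\mT^n_{p_1,\ldots,p_t+1,\ldots,p_k}$ matched by Lemma~\ref{lem:pointing} line up exactly with the $k$ terms $\mC^n_{p_1,\ldots,p_t+1,\ldots,p_k}$ matched by $\Phi$, so that the two disjoint unions correspond term by term and the composite correspondence has the claimed constant ratio $\prod_{i=1}^k p_i!$.
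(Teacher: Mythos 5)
Your proposal is correct and takes essentially the same route as the paper: the paper derives Corollary~\ref{cor:bij-nebulas} by exactly this composition, stating that Theorems~\ref{thm:bij-tree-rooted},~\ref{thm:bij-nebulas} and Lemma~\ref{lem:pointing} ``immediately imply'' the result. Your explicit bookkeeping---that the dual-opening and $\Phi$ steps are genuine bijections (applied termwise for each $t\in[k]$), that the entire multiplicity $\prod_{i=1}^k p_i!$ comes from the pointing correspondence of Lemma~\ref{lem:pointing}, and that the nebulas in question are the rooted ones of Theorem~\ref{thm:bij-nebulas}---is precisely the verification the paper leaves implicit.
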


\noindent \textbf{Remark.} By definition, tree-rooted constellations are tree-pointed constellations such that the marked arborescence is directed toward the root-vertex of the map. One can check that tree-rooted constellations actually correspond to nebulas such that the sequence of black buds and white buds around $N$ form a \emph{parenthesis system}, that is, the number of black buds never exceeds the number of white buds when turning in clockwise direction around the face of $N$ starting at the corner preceding the edge of type 1 around the root vertex. Our reason for considering tree-pointed constellations is to get get rid of the parenthesis system condition (which is hard to control). \\

The rest of this section is devoted to the proof of Theorem~\ref{thm:bij-nebulas}. 

\begin{lemma}\label{lem:opening}
Let $C$ be a $k$-constellation and let $A$ be a spanning tree (not necessarily an arborescence) having $p_t$ edges of type $t\in[k]$. Then the dual-opening of $(C,A)$ is a nebula of type $(p_1,\ldots,p_k)$. 
\end{lemma}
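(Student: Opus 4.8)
The plan is to verify the four defining conditions of a nebula together with the type, treating the three local conditions as inherited from the dual-constellation structure of $C^*$ and concentrating the real work on the \emph{single-face} property (see Figure~\ref{fig:dual-opening}). Since $C^*$ is a dual-constellation, its edges join a black and a white vertex, each black vertex has degree $k$, and the types of the incident half-edges increase (resp.\ decrease) by one clockwise around each black (resp.\ white) vertex. Cutting an edge of type $t$ into two buds of type $t$ leaves each half-edge in its cyclic position and does not disturb the black/white incidences, so conditions (i), (ii) and (iii) are preserved verbatim. For condition (iv) and the type, I would observe that an edge of $A$ of type $t$ dualizes to an edge of $C^*$ of type $t$ joining a black and a white vertex, so cutting it produces exactly one black bud and one white bud of type $t$. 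As $A$ has $p_t$ edges of type $t$, the dual-opening has $p_t$ black buds and $p_t$ white buds of type $t$, which yields both condition (iv) and the announced type $(p_1,\ldots,p_k)$.

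It remains to prove that $N$ has a single face. Here I would exploit duality: the faces of $C^*$ are in bijection with the vertices of $C$, and the two faces of $C^*$ bordering a dual edge $e^*$ correspond to the two endpoints in $C$ of the primal edge $e$. The dual-opening cuts precisely the dual edges of the spanning tree $A$, so I would analyze the effect of these cuts on the faces one edge at a time. I would process the $|A|=|V(C)|-1$ cuts in an arbitrary order, maintaining the partition of $V(C)$ whose blocks record which faces of $C^*$ have so far been merged; initially every face is a disk (as $C^*$ is a map) and the blocks are singletons.

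The heart of the proof is the following dichotomy for cutting one edge $e^*$ separating faces $f_1,f_2$: if $f_1\neq f_2$ then the cut splices their two boundary walks around the newly created buds and merges them into a single face, and since gluing two disks along a boundary arc again yields a disk, the embedding stays cellular and the face count drops by one; whereas if $f_1=f_2$ the cut could split the face in two or destroy cellularity. The main obstacle is therefore to rule out this second case, and this is exactly where the hypothesis that $A$ is a spanning \emph{tree} (not merely a spanning subgraph) enters: when we process $e=\{u,v\}\in A$, the endpoints $u,v$ must lie in distinct blocks, for otherwise the already-cut edges of $A$ would provide a path between $u$ and $v$ and $e$ would close a cycle in the forest $A$, a contradiction. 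Hence every cut falls in the good case $f_1\neq f_2$, all the merges preserve cellularity with disk faces, and after the $|V(C)|-1$ cuts the connectedness of $A$ forces the blocks to coalesce into one, leaving a single disk face. As an independent sanity check, Euler's relation applied to $C^*$ and to $N$ (buds being absorbed into the faces in which they lie) again gives $F(N)=1$.
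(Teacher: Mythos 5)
Your proof is correct and takes essentially the same route as the paper's: the paper likewise deduces the single-face property from the two facts that cutting the duals of the spanning-tree edges merges the faces of $C^*$ into one (connectivity of $A$) and that this face is simply connected (acyclicity of $A$), while dismissing conditions (i)--(iv) as ``easily seen to hold.'' Your edge-by-edge induction with the block partition, and your explicit count of black and white buds of each type, are just a more detailed implementation of that same argument.
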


\begin{proof}
Let $N$ be the dual opening of $(C,A)$. Since the spanning tree $A$ connects all the vertices of the constellation $C$, the faces of $C^*$ are all merged into a single face of $N$ by cutting the edges of $C^*$ crossed by $A$. Moreover, this face of $N$ is simply connected because $A$ is simply connected (i.e., has no cycle). Hence $N$ is indeed a \emph{map} with a \emph{single face}. The other properties of nebulas are easily seen to hold.
\end{proof}

Lemma~\ref{lem:opening} proves that the dual opening of a tree-pointed constellation is a nebula. 
We now define the closure of nebulas.
Let $N$ be a nebula. We call \emph{turning clockwise around $N$} the process of walking around the face of $N$ by following its edges, with the edges on the right side of the walker (see Figure~\ref{fig:bidding}(a)).
We say that a white bud $w$ \emph{matches} a black bud $b$ if there is no bud between $w$ and $b$ when turning clockwise around $N$ starting from $w$. We call \emph{closure} of the nebula $N$, the result of \emph{recursively} forming edges by gluing together pairs of matching buds (thus, at a later step of this recursive process, we say that a white bud $w$ matches a black bud $b$, if there is no bud between $w$ and $b$ when turning clockwise around the face containing the buds). This process is illustrated in Figure~\ref{fig:closure}. 
It is clear that the closure can be done without creating any edge-crossings, that it will exhaust all buds, and that the result is uniquely defined. 
\begin{lemma}\label{lem:closure}
The pairs of buds glued together during the closure of a nebula have the same type. Consequently, the closure of a nebula gives a dual-constellation.
\end{lemma}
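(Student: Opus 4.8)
The plan is to track how the \emph{type} of half-edges evolves as one turns clockwise around a face, and to show that the matching rule defining the closure forces matched buds to carry the same type. First I would fix the face-tracing conventions. A map is encoded by its rotation system, and turning clockwise around a face with the edges on the right (equivalently, the face on the walker's left) is the traversal $\psi=\sigma\circ\iota$, where $\sigma$ sends a half-edge to the next one clockwise around its vertex and $\iota$ swaps the two halves of an edge; for a bud $b$ we set $\iota(b)=b$, so the walk simply passes the tip of $b$ and continues with the next half-edge clockwise around its vertex. Thus the clockwise boundary walk of the single face of $N$ visits a cyclic sequence of half-edges $g_1,g_2,\dots$ (each edge contributing both of its halves, each bud contributing its unique half), and every bud occurs exactly once in this sequence.

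The key local rule is the following. Assign to each half-edge the type $t\in[k]$ of the edge or bud it belongs to. Then two consecutive visited half-edges satisfy $t(g_{j+1})\equiv t(g_j)+1$ if $g_{j+1}$ sits at a black vertex and $t(g_{j+1})\equiv t(g_j)-1$ if $g_{j+1}$ sits at a white vertex, the congruences being modulo $k$. This is immediate from condition (iii) of the definition of a nebula: since $g_{j+1}=\sigma(\iota(g_j))$ is the half-edge following $\iota(g_j)$ clockwise around their common vertex $w$, and $t(\iota(g_j))=t(g_j)$, the type increases by one if $w$ is black and decreases by one if $w$ is white.

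Now consider a white bud $w=g_0$ and the black bud $b=g_m$ that it matches, so that $g_1,\dots,g_{m-1}$ are all edge-halves, no bud lying strictly between them. The first step $g_0\to g_1$ is a turn at the white vertex carrying $w$, so $g_1$ sits at that same white vertex; each subsequent step $g_i\to g_{i+1}$ with $i\ge 1$ crosses a complete edge to its other endpoint, which has the opposite color. Hence $g_1$ is at a white vertex, $g_2$ at a black vertex, and the colors alternate thereafter; since $g_m=b$ is at a black vertex, $m$ is even. The associated increments are $-1,+1,-1,+1,\dots,+1$, an even string summing to $0$, so $t(b)\equiv t(w)\pmod k$ and the two matched buds indeed have the same type.

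Finally, gluing a white bud of type $t$ to a black bud of type $t$ creates an edge of type $t$ joining a white and a black vertex, and inserting type-$t$ half-edges into type-$t$ edges leaves the clockwise type conditions (iii) and the degree-$k$ condition (ii) for black vertices intact, while preserving the balance condition (iv). The resulting map-with-buds is therefore again governed by the same local type structure on each of its (now possibly several) faces, so the argument above applies verbatim to each subsequent matching; induction on the number of buds then shows that \emph{every} pair glued during the closure has equal type. When all buds are exhausted one is left with a connected map having black and white vertices, $k$ types of edges each joining a black and a white vertex, each black vertex of degree $k$, and the clockwise type conditions -- that is, a dual-constellation. I expect the only real obstacle to be the careful bookkeeping of the face-tracing convention and of the behavior of buds in the walk; once the increment rule is pinned down, the parity cancellation and the persistence of the type structure through the recursion are routine.
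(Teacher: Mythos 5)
Your proof is correct and takes essentially the same route as the paper's: the paper also argues that Condition (iii) forces matching buds to have equal type, that Condition (iii) is preserved when a matched pair is glued into an edge, and then concludes by induction on the number of buds, with the dual-constellation statement immediate. Your face-walk analysis with the alternating $\pm 1$ type increments and the parity cancellation is simply a careful justification of the step the paper dismisses as ``easy to see.''
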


\begin{proof}
It is easy to see that if a map satisfies Condition (iii) of nebulas, then matching pairs of buds $(w,b)$ have the same type. Moreover, in this case, Condition (iii) is preserved by forming an edge out of the buds $w,b$. This shows the first claim by induction on the number of buds. The second claim is clear. 
\end{proof}

\begin{figure}[h] 
\begin{center} 
\includegraphics[width=\linewidth]{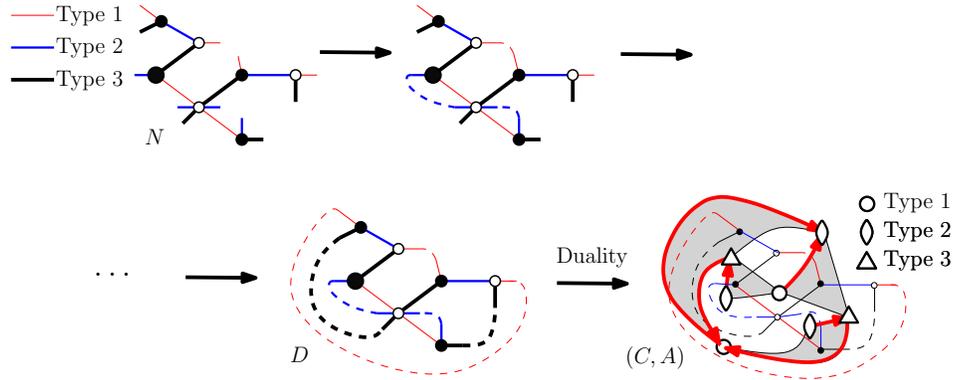} 
\caption{The closure of a nebula $N$ (here $N$ is embedded in the sphere) obtained by recursively gluing pairs of matching edges gives a dual-constellation $D$ (with dashed bud-edges). Taking its dual gives a tree-rooted constellation $(C,A)$.} \label{fig:closure}
\end{center} 
\end{figure} 

Let $N$ be a nebula, let $D$ be its closure (which is a dual-constellation) and let $C$ be the corresponding constellation. Let $A$ be the set of edges of the constellation $C$ which are dual to the edges of $D$ which have been created during the closure of $N$ (by joining two buds). The pair $(C,A)$ is called the \emph{dual-closure} of the nebula $N$.

\begin{lemma}\label{lem:dual-closure}
The dual-closure of a nebula is a tree-pointed constellation. 
\end{lemma}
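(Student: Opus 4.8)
Write $D$ for the closure of $N$, which is a dual-constellation by Lemma~\ref{lem:closure}, and $C=D^*$ for the corresponding constellation; the distinguished black vertex of $N$ becomes a black vertex of $D$, hence a black face (hyperedge) of $C$, which I take as root hyperedge, so that $C$ is a rooted constellation. By definition $A\subseteq E(C)$ is the set of edges dual to the \emph{closure edges} of $D$ (the edges created by gluing matching buds), so all that remains is to prove that $A$ is a $v_0$-arborescence for some vertex $v_0$. The plan is to first show that $A$ is a spanning tree of $C$, and then, after orienting its edges by type, to show that every vertex but one has out-degree one, which is exactly the arborescence condition. The main tool is to track how each closure edge cuts the face it is created in, together with a local rule describing how edge-types vary along the boundary walk of a face.

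\textbf{Step 1: $A$ is a spanning tree.} I will argue on the dual side through the graph $\Gamma$ whose vertices are the faces of $D$ (equivalently the vertices of $C$) and whose edges are the closure edges of $D$ (equivalently the edges of $A$), each closure edge joining the two faces it borders. Since every closure edge is formed by gluing two matching buds lying on the boundary of a single, simply connected face, it cuts that face into two distinct faces. Hence each closure step increases the number of faces by one, so $D$ has exactly $|A|+1$ faces, i.e. $C$ has $|A|+1$ vertices. Moreover $\Gamma$ is connected: the single face of $N$ gives a connected $\Gamma$ at the start, and replacing a face by two faces joined by the new closure edge preserves connectivity. A connected graph on $|A|+1$ vertices with $|A|$ edges is a tree, so $A$ is a spanning tree of $C$.

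\textbf{Step 2: $A$ is an arborescence.} Orient each edge of $C$ of type $t$ from its type-$t$ endpoint to its type-$(t+1)$ endpoint, so that an edge of $A$ is outgoing at its type-$t$ endpoint. Dually, a closure edge $e$ of type $t$ of $D$ separates the face of type $t$ (dual to the type-$t$ endpoint, the \emph{tail} of $e$) from the face of type $t+1$ (the \emph{head}), so it suffices to prove that each face of $D$ is the tail of at most one closure edge; combined with Step 1 and the edge count this forces all but one vertex of $C$ to have out-degree one. To identify the tail, I will use the following local rule, read off from conditions (i)--(iii) defining (dual-)constellations: when turning clockwise around a face of $D$, the type of the boundary half-edges increases by one at each black vertex and decreases by one at each white vertex. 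Consequently, when a white bud $w$ and a black bud $b$ of common type $t$ are matched and glued into a closure edge $e$, the arc of the boundary walk running clockwise from $w$ to $b$ together with $e$ bounds a face of type $t$ -- the \emph{inner} face, which is the tail of $e$ -- while the complementary region is a face of type $t+1$, the head. Crucially, by the matching rule no bud lies strictly between $w$ and $b$, so the inner face carries no buds on its boundary and is therefore never altered by the remaining closure steps: it is a genuine face of $D$, created exactly by $e$. Hence every inner face is the tail of exactly the closure edge that created it (for any other closure edge on its boundary it lies on the head side, that edge's inner face having been finalized earlier), and the unique face that is never an inner face is the tail of no closure edge. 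Dualizing, all vertices of $C$ but one have out-degree one, so $A$ is a $v_0$-arborescence, where $v_0$ is dual to the surviving face; thus $(C,A)$ is a tree-pointed constellation.

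\textbf{Main obstacle.} The genuinely delicate point is the local clockwise type rule together with the resulting identification of the inner face as the type-$t$ (tail) side of $e$: this requires fixing the orientation conventions and analysing the boundary walk at black and white vertices and across a bud, which is elementary but must be done carefully to avoid an off-by-one in the types (an error there would swap tail and head and root the arborescence wrongly). Once this identification is secured, the key structural observation -- that the inner face is bud-free, hence finalized immediately, hence the tail of a unique closure edge -- makes the arborescence property essentially immediate.
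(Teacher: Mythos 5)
Your proof is correct and follows essentially the same route as the paper: both work in the dual picture, observe that each gluing creates a bud-free (hence never-again-subdivided) face on a determined side of the new edge, and use the type conventions around black and white vertices to orient the resulting spanning tree toward a unique root $v_0$. The differences are only organizational — you establish connectivity by induction on the closure process (each face-split preserves connectivity of the face-adjacency graph), where the paper instead chases ``closing edges'' back to the original face $f_0$, and you phrase the arborescence condition as an out-degree count, where the paper assigns each non-root vertex its parent edge directly — but both arguments rest on the same identification of your ``inner face'' (the paper's ``new face'') as the tail side of its closure edge.
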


\begin{proof}
Let $N$ be a nebula, let $D$ be its closure, and let $(C,A)$ be its dual-closure. By Lemma~\ref{lem:closure} we know that $C$ is a constellation, so we only need to show that $A$ is an arborescence of $C$. 
We call bud-edges the edges of $D$ created during the closure and we view them as oriented from the white bud to the black bud (so that we can distinguish the face on their \emph{left} and the face on their \emph{right}). Let $f_0$ be the face of the nebula $N$. During the closure of $N$, each time a matching pair of buds are glued into a bud-edge $e$, we consider the face at the right of $e$ as a ``new face'', while we consider the face the left of $e$ as the ``original face'' $f_0$. Hence, at any time during the closure, the original face $f_0$ is at the left of every incident bud-edge, while any face $f\neq f_0$ is at the right of exactly one bud-edge which we call the \emph{closing edge of $f$}. Observe that for any face $f\neq f_0$ of $D$, one can reach the face $f_0$ of $D$ by starting inside $f$ and repeatedly crossing the closing edge of the current face.

Let $v_0$ be the vertex of the constellation $C$ dual to the face $f_0$ of $D$, and for any vertex $v\neq v_0$ let the \emph{parent edge} of $v$ be the dual of the closing edge of $f$, where $f$ is the face of $D$ dual to the vertex $v$. By definition, $A$ is the set of parent edges and we will show that it is a $v_0$ arborescence. First of all, the above observation implies that starting from a vertex $v\neq v_0$ and repeatedly following the parent edges one eventually reaches the vertex $v_0$. Thus $A$ connects all the vertices of $C$ and since there is one more vertex in $C$ than edges in $A$, we can conclude that $A$ is a spanning tree of $C$. It now suffices to check that the tree $A$ is oriented toward $v_0$, or equivalently that the parent edge of a vertex of type $t$ has type $t$. This is true because of the orientation convention for closing edges and the fact that the type increases in clockwise order around the black vertices of the dual-constellations $D$. 
\end{proof}

We now conclude the proof of Theorem~\ref{thm:bij-nebulas}. Let $\Lambda$ and $\Delta$ denote respectively the dual-opening and dual-closure mappings. By Lemma~\ref{lem:dual-closure}, the closure $\Delta(N)$ of any nebula $N$ is a tree-pointed constellation. Moreover, it is clear that $\Lambda\circ\Delta(N)=N$. 

We now consider a tree-pointed constellation $(C,A)$, where $A$ denotes the marked $v_0$-arborescence. By Lemma~\ref{lem:opening}, $N:=\Lambda(C,A)$ is a nebula. We want to prove that the  dual-closure of $N$ is the tree-pointed constellation $(C,A)$. 
%Each edge $e\in A$ correspond to a white bud $b'$ and a black bud $b'$ of $N$. We want to show that $b,b'$ will be joined together during the closure of $N$. 
For this, we consider the white bud $w$ and black bud $b$ obtained by cutting in two halves an edge $e^*$ of the dual-constellation $C^*$ crossing $A$, and want to prove that the buds $w$ and $b$ will be glued together during the closure of $N$. We consider the set of buds encountered between $w$ and $b$ when turning clockwise around  $N$ starting from the bud $w$. Note that when $N$ and $A$ are superimposed, turning around the face of $N$ is the same as turning (counterclockwise) around the arborescence $A$. Let $e$ be the edge of $A$ crossing the edge $e^*$, and let $A'$ be the subtree of $A-\{e\}$ not containing the vertex $v_0$.   The buds between $w$ and $b$ around $N$ are the buds cut by the subtree $A'$ (since the tree $A$ is oriented toward $v_0$).  We now reason by induction on the number of edges in $A'$ to show that $w$ and $b$ will be glued together during the closure of $N$. First observe that if $A'$ has no edge, then $w$ and $b$ are matching buds, hence they will indeed be glued together during the closure of $N$.  Now if $A'$ has some edges, we know by induction that all the buds between $w$ and $b$ will be glued in pairs during the closure, therefore $w$ and $b$ will eventually be matching, hence will be glued together during the closure of $N$.  Thus, the dual-closure of $N$ is the tree-pointed constellation $(C,A)$. In other words, $\Delta\circ\Lambda(C,A)=(C,A)$. 

We have proved that $\Lambda$ and $\Delta$ are inverse mappings, so the dual-opening $\Lambda$ is a bijection. This completes the proof of Theorem~\ref{thm:bij-nebulas}.\hfill $\square$\\

%%%%%%%%%%%%%%%%%%%%%%%%%%%%%%%%%%%%%%%%%%%%%%%

%%%%%%%%%%%%%%%%%%%%%%%%%%%%%%%%%%%%%%%%%%%%%%%

%%%%%%%%%%%%%%%%%%%%%%%%%%%%%%%%%%%%%%%%%%%%%%% 

\section{From nebulas to biddings}\label{sec:biddings}
In this section we draw a connection between nebulas and the set $\mM^n_{p_1,\ldots,p_k}$ whose cardinality appears in Theorem~\ref{thm:Jackson-counting}. More precisely we will encode nebulas by \emph{biddings}, where a \emph{bidding} of size $n$ and type $(p_1,\ldots,p_k)$ is a pair $((\om_1,\ldots,\om_k),(R_1,\ldots,R_n))$, where $\om_1,\ldots,\om_k$ are permutations of $[n]$, and $(R_1,\ldots,R_n)$ belongs to $\mM^n_{p_1,\ldots,p_k}$. This will lead to the probabilistic puzzle stated in Theorem~\ref{thm:tree-puzzle}.

%We call \emph{bidding} of size $n$ and type $(p_1,\ldots,p_k)$ a pair $((\om_1,\ldots,\om_k),(R_1,\ldots,R_n))$, where $\om_1,\ldots,\om_k$ are permutations of $[n]$, and $(R_1,\ldots,R_n)$ belongs to $\mM^n_{p_1,\ldots,p_k}$. 

A nebula of size $n$ and type $(p_1,\ldots,p_k)$ is said to be \emph{labelled} if its $n$ black vertices are given distinct labels in $[n]$, and the $p_t$ white buds of type $t$ are given distinct labels in $L_t$, where $L_t$ is the set of labels of the $p_t$ black vertices incident to the black buds of type $t$. Clearly there are $n!\prod_t p_t!$ ways of labelling a nebula of size $n$ and type $(p_1,\ldots,p_k)$. We also define the label of any edge of the nebula to be the label of the incident black vertex, so that for every pair $(t,i)\in[k]\times[n]$ there is either an edge or a white bud of type $t$ and label $i$. We denote by $c(t,i)$ the corner preceding either the edge or white bud of type $t$ labelled $i$ in clockwise order around the incident white vertex.

Let $N$ be a labelled rooted $k$-nebula of size $n$. We will now consider the sequence of \emph{white corners} (corners incident to white vertices) encountered when turning around the nebula $N$. Recall that \emph{turning clockwise around $N$} means walking around the face of $N$ by following the edges of $N$, with the edges on the right-side of the walker (the buds are just \emph{crossed}). A clockwise tour of a nebula is indicated in Figure~\ref{fig:bidding}. By turning clockwise around $N$ each of the white corners $\{c(t,i),~t\in[k],i\in[n]\}$ are visited, and this defines a cyclic order on $[k]\times[n]$. This cyclic order then gives a total order on the set $[k]\times[n]$ by choosing $(k,\ell_0)$ to be the greatest element, where $\ell_0$ is the label of the (black) root vertex of $N$. We call this total order the \emph{appearance order} on $[k]\times[n]$ and we denote it by $\prec_N$,
\begin{definition}
For a rooted nebula $N$, we denote by $\Psi(N)$ the bidding $((\om_1,\ldots,\om_k),(R_1,\ldots,R_n))$ defined as follows:
\begin{compactitem}
\item[(i)] for all $i\in[n]$, $R_i\subsetneq [k]$ is the set of types of the buds incident to the black vertex labelled $i$,
\item[(ii)] for all $t\in[k]$, $\om_t$ is the permutation of $n$ giving the appearance order of the edges or white buds of type $t$, that is, $(t,\om_t(1))\prec_N(t,\om_t(2))\prec_N\cdots \prec_N(t,\om_t(n))$.
\end{compactitem}
\end{definition}

\begin{figure}[h] 
\begin{center} 
\includegraphics[width=\linewidth]{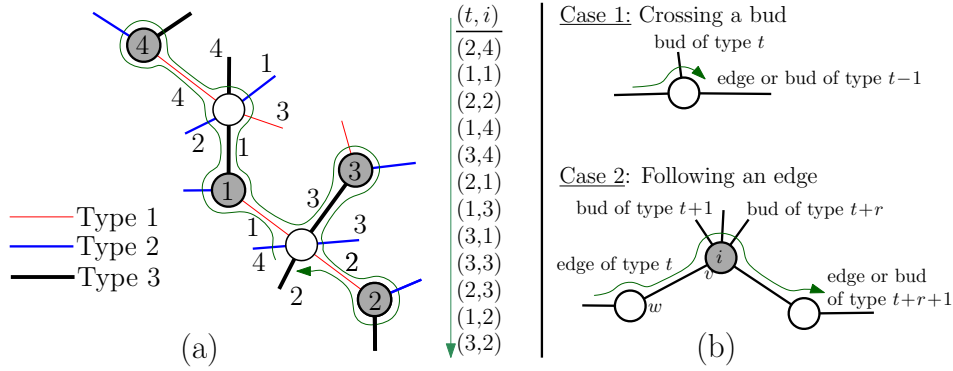} 
\caption{(a) The clockwise tour of a nebula $N$. Here $k=3$, $n=4$ and the root vertex has label $\ell_0=2$. The appearance order on $[k]\times[n]$ is indicated vertically (from top to bottom). The bidding $\Psi(N)=((\om_1,\om_2,\om_3),(R_1,R_2,R_3,R_4))$ is given by $R_1=\{2\}$, $R_2=\{2,3\}$, $R_3=\{1,2\}$, $R_4=\{2,3\}$ and $\om_1=1432$, $\om_2=3214$, $\om_3=4132$ (the permutations are indicated in one-line notation here). (b) From a white corner to the next during the clockwise tour of a nebula.} \label{fig:bidding}
\end{center} 
\end{figure} 

The mapping $\Psi$ is indicated in Figure~\ref{fig:bidding}. Recall now Definition~\ref{def:alpha} of the mapping $\al$. We say that a bidding $((\om_1,\ldots,\om_k),(R_1,\ldots,R_n))$ is \emph{valid} if the graph $\al((\om_1(n),\ldots,\om_{k-1}(n)),(R_1,\ldots,R_n))$ is a tree. We now state the main result of this section.

\begin{theorem}\label{thm:bij-bidding}
The mapping $\Psi$ is a bijection between labelled rooted nebulas of size $n$ and type $(p_1,\ldots,p_k)$ and valid biddings of size $n$ and type $(p_1,\ldots,p_k)$.
\end{theorem}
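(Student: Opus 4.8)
The plan is to recognize the clockwise tour of a nebula as an Eulerian tour of an auxiliary directed multigraph on the set of \emph{types} $[k]$, and then to read Theorem~\ref{thm:bij-bidding} off the BEST Theorem (Lemma~\ref{lem:BESTthm}), exactly as in the proof of Theorem~\ref{thm:bij-tree-rooted}. First I would pin down the local rule governing the tour (Figure~\ref{fig:bidding}(b)): starting from the white corner $c(t,i)$ and turning clockwise, one crosses the type-$t$ edge or white bud labelled $i$ and then, after possibly running around a black vertex and its black buds, reaches the next white corner. A direct case analysis --- the bud case $t\in R_i$ versus the edge case $t\notin R_i$, using that types decrease by one clockwise around white vertices and increase by one clockwise around black vertices --- shows that the type of this next white corner is exactly $\al(t,R_i)$, with $\al$ as in Definition~\ref{def:alpha}. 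In other words, the transition $c(t,i)\mapsto c(\al(t,R_i),\cdot)$ governs the evolution of types along the appearance order $\prec_N$.

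Next, from the data $(R_1,\ldots,R_n)$ alone I would build the directed multigraph $H$ on vertex set $[k]$ whose arcs are the $kn$ pairs $(t,i)\in[k]\times[n]$, the arc $(t,i)$ going from $t$ to $\al(t,R_i)$. The key elementary fact is that for each fixed $R\subsetneq[k]$ the map $t\mapsto\al(t,R)$ is a permutation of $[k]$ (it decomposes the cyclic order of $[k]$ into blocks, each a maximal run of bud-positions together with the preceding edge-position, and acts bijectively within each block). Consequently every vertex of $H$ has in-degree and out-degree equal to $n$, so $H$ is balanced and Lemma~\ref{lem:BESTthm} applies. The clockwise tour of $N$, recorded through its white corners, is then precisely an Eulerian tour of $H$ that starts and ends at the vertex $k$ (the global maximum of $\prec_N$ being $(k,\ell_0)$), and the permutation $\om_t$ is exactly the order in which the $n$ arcs leaving $t$ are used. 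Thus the last arc out of $t$ is $(t,\om_t(n))$, pointing to $\al(t,R_{\om_t(n)})$, and the set of these last arcs for $t\in[k-1]$ is exactly the graph $\al((\om_1(n),\ldots,\om_{k-1}(n)),(R_1,\ldots,R_n))$ appearing in the validity condition.

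With this dictionary in place the theorem follows from Lemma~\ref{lem:BESTthm} applied with $v_0=k$: a choice of local orders $(\om_1,\ldots,\om_k)$ corresponds to a single Eulerian tour starting and ending at $k$ if and only if the last arcs out of the vertices $t\neq k$ form a $k$-arborescence; since this set of last arcs is the graph of the validity condition and has $k-1$ edges on $k$ vertices, being a $k$-arborescence is equivalent to being a tree, i.e.\ to the bidding being valid (note that $\om_k$ is unconstrained, matching that $\om_k(n)$ does not enter the validity condition). For the forward direction this shows $\Psi(N)$ is always a valid bidding. For the converse I would reconstruct the nebula from a valid bidding by running the tour dictated by the local orders --- guaranteed by Lemma~\ref{lem:BESTthm} to be a single Eulerian tour visiting every corner exactly once --- which yields the full cyclic sequence of white corners, and then gluing the $n$ black vertices (whose bud/edge pattern is read from the $R_i$) along this single face by the standard polygon-gluing reconstruction of a one-face map; rootedness is recovered from $\ell_0=\om_k(n)$ and the white-bud labels from the sets $L_t$. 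Checking that the two constructions are mutually inverse and preserve type and size is then routine.

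I expect the main obstacle to be the bookkeeping behind the local rule and the reconstruction step: verifying carefully, with the right face-walk orientation conventions, that the type of the next white corner is $\al(t,R_i)$ (handling the modular runs of black buds correctly), and that the one-face map --- in particular the rotation system at the white vertices --- is uniquely and correctly recovered from the single Eulerian tour of $H$ together with the subsets $R_i$. Once the tour of a nebula is identified with an Eulerian tour of $H$, the reduction to the BEST Theorem is immediate and parallels its earlier use in Section~\ref{sec:tree-rooted}.
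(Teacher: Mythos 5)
Your proposal is correct and takes essentially the same route as the paper: the paper factors $\Psi$ through an intermediate object called a \emph{prebidding} (which is exactly your cyclic sequence of white corners governed by the $\al$-transition rule, Lemma~\ref{lem:vartheta}, proved bijective by the same polygon-gluing reconstruction you describe), and then applies the BEST Theorem to precisely your auxiliary digraph on $[k]$ with arcs $(t,i)$ from $t$ to $\al(t,R_i)$ (Lemma~\ref{lem:sigma}), including the same observations that $t\mapsto\al(t,R)$ is a permutation and that the last out-arcs form a $k$-arborescence if and only if the bidding is valid. The only difference is organizational (the paper names the intermediate object and splits the proof into two lemmas), so nothing is missing from your argument.
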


\begin{corollary} \label{cor:bij-bidding}
The valid biddings of size $n$ and type $(p_1,\ldots,p_k)$ are in $n!$-to-1 correspondence with the disjoint union
$\mC^n_{p_1+1,p_2,\ldots,p_k } \cup \cdots \cup \mC^n_{p_1,p_2,\ldots,p_k+1}$ of colored cacti. Consequently, Jackson's counting formula (Theorem~\ref{thm:Jackson-counting}) is equivalent to Theorem~\ref{thm:tree-puzzle}. 
\end{corollary}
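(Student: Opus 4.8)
The plan is to derive both assertions by combining Theorem~\ref{thm:bij-bidding} with the earlier correspondences and then counting the valid biddings in two different ways. Fix $p=(p_1,\ldots,p_k)$, write $p^{+t}$ for the tuple obtained from $p$ by adding $1$ to its $t$-th entry, and set $\mathbf 1=(1,\ldots,1)$. \emph{For the first assertion}, note that by Theorem~\ref{thm:bij-bidding} the map $\Psi$ is a bijection between labelled rooted nebulas of size $n$ and type $p$ and valid biddings of this size and type. Since each rooted nebula admits exactly $n!\prod_t p_t!$ labellings, the number of valid biddings equals $n!\prod_t p_t!$ times the number of rooted nebulas of type $p$. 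By Corollary~\ref{cor:bij-nebulas} these nebulas are in $1$-to-$\prod_i p_i!$ correspondence with $\mC^n_{p^{+1}}\cup\cdots\cup\mC^n_{p^{+k}}$, so the factors $\prod_i p_i!$ cancel and the valid biddings are in $n!$-to-$1$ correspondence with this disjoint union of colored cacti.

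\emph{To reduce the tree-puzzle to a summed identity}, I would count valid biddings a second way, directly from the definition. A bidding $((\om_1,\ldots,\om_k),(R_1,\ldots,R_n))$ is valid precisely when $\al((\om_1(n),\ldots,\om_{k-1}(n)),(R_1,\ldots,R_n))$ is a tree, a condition depending only on $(R_1,\ldots,R_n)$ and on the final values $\om_1(n),\ldots,\om_{k-1}(n)$. Hence forgetting everything but these data has constant fibers of size $((n-1)!)^{k-1}n!$ (free choice of $\om_1,\ldots,\om_{k-1}$ with prescribed last value, and of $\om_k$), so the number of valid biddings equals $((n-1)!)^{k-1}n!\cdot T$, where $T$ counts the pairs $((i_1,\ldots,i_{k-1}),(R_1,\ldots,R_n))$ with $(R_1,\ldots,R_n)\in\mM^n_{p_1,\ldots,p_k}$ for which $\al$ is a tree. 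Comparing with the first assertion yields the \emph{unconditional} identity $\sum_{t=1}^k C^n_{p^{+t}}=((n-1)!)^{k-1}T$. On the probabilistic side the total number of pairs is $n^{k-1}M^n_{p_1,\ldots,p_k}$, while decomposing $\mM^n$ according to the unique element missing from $R_1$ shows that the number of $(R_1,\ldots,R_n)$ with $|R_1|=k-1$ equals $\sum_{s=1}^k M^{n-1}_{p^{+s}-\mathbf 1}$. Using $n^{k-1}((n-1)!)^{k-1}=(n!)^{k-1}$, Theorem~\ref{thm:tree-puzzle} for the tuple $p$ becomes \emph{exactly} the identity
\[
\sum_{t=1}^k C^n_{p^{+t}}=(n!)^{k-1}\sum_{t=1}^k M^{n-1}_{p^{+t}-\mathbf 1}.
\]

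\emph{It remains to prove equivalence with Jackson's formula}, i.e. that this family of summed identities (ranging over all $p$) is equivalent to the per-type formula $C^n_q=(n!)^{k-1}M^{n-1}_{q-\mathbf 1}$ of Theorem~\ref{thm:Jackson-counting} (ranging over all $q$). One direction is immediate, since substituting Jackson's formula term by term makes the two sides of the displayed identity coincide. For the converse I would induct on the first coordinate $q_1$. The base case $q_1=1$ is elementary: a monochromatic $\pi_1$ imposes no constraint, so $C^n_{1,q_2,\ldots,q_k}=\prod_{t=2}^k N(n,q_t)$, where $N(n,q)$ is the number of permutations of $[n]$ whose cycles carry a surjective coloring by $[q]$; the classical evaluation $N(n,q)=n!\binom{n-1}{q-1}$, together with $M^{n-1}_{0,q_2-1,\ldots,q_k-1}=\prod_{t\ge 2}\binom{n-1}{q_t-1}$, gives Jackson's formula when $q_1=1$. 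For $q_1\ge 2$ I apply the summed identity at $p=(q_1-1,q_2,\ldots,q_k)$: the term $C^n_{p^{+1}}$ equals $C^n_q$, whereas every other term $C^n_{p^{+t}}$ has first coordinate $q_1-1$ and is covered by the induction hypothesis, so subtracting leaves $C^n_q=(n!)^{k-1}M^{n-1}_{q-\mathbf 1}$.

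\emph{Main obstacle.} The only point that is not purely formal is that the bijections record the type of a nebula but not which vertex type received the extra vertex in the corresponding cactus, so one obtains only the \emph{aggregate} identity summed over $t$, and the individual connection coefficients are not directly visible. Recovering them is precisely what the coordinate induction accomplishes, and the crux is its base case, which rests on the cycle-coloring count $N(n,q)=n!\binom{n-1}{q-1}$.
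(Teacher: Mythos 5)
Your proposal is correct and follows essentially the same route as the paper: the first assertion via Theorem~\ref{thm:bij-bidding}, Corollary~\ref{cor:bij-nebulas} and the $n!\prod_t p_t!$ labelling count, and the second via reducing the tree-puzzle to the aggregate identity $\sum_t C^n_{p^{+t}}=(n!)^{k-1}\sum_t M^{n-1}_{p^{+t}-\mathbf 1}$ and then recovering Jackson's per-type formula by induction on the first coordinate with the same base case $C^n_{1,q_2,\ldots,q_k}=(n!)^{k-1}\prod_{t\ge 2}\binom{n-1}{q_t-1}$. The only difference is cosmetic: you spell out the fiber count $((n-1)!)^{k-1}n!$ relating valid biddings to pairs $((i_1,\ldots,i_{k-1}),(R_1,\ldots,R_n))$, a step the paper leaves implicit when matching the two sides of its identity with bidding counts.
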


\begin{proof}[Proof of Corollary~\ref{cor:bij-bidding}] 
Recall that there are $n!\prod_t p_t!$ ways of labelling a rooted nebulas of size $n$ and type $(p_1,\ldots,p_k)$. Hence, by Corollary~\ref{cor:bij-nebulas}, labelled rooted nebulas of size $n$ and type $(p_1,\ldots,p_k)$ are in  $n!$-to-1 correspondence with the union $\mC^n_{p_1+1,p_2,\ldots,p_k } \cup \cdots \cup \mC^n_{p_1,p_2,\ldots,p_k+1}$ of colored cacti. Thus the first statement of Corollary~\ref{cor:bij-bidding} is a direct consequence of Theorem~\ref{thm:bij-bidding}. 

We now prove the second statement of Corollary \ref{cor:bij-bidding}. First observe that Theorem~\ref{thm:Jackson-counting} implies that the set $\widetilde{\mC}^n_{p_1,p_2,\ldots,p_k }:=\mC^n_{p_1+1,p_2,\ldots,p_k } \cup  \cdots \cup \mC^n_{p_1,p_2,\ldots,p_k+1}$ has cardinality 
\begin{eqnarray}\nonumber%\label{eq:induction-Jackson}
|\widetilde{\mC}^n_{p_1,p_2,\ldots,p_k }|={n!}^{k-1}\left(M^{n-1}_{p_1,p_2-1,\ldots,p_k-1}+\cdots +M^{n-1}_{p_1-1,\ldots,p_{k-1}-1,p_k}\right).
\end{eqnarray}
Conversely it is easy to see that this equation implies Theorem~\ref{thm:Jackson-counting} (that is, $|\mC^n_{p_1,p_2,\ldots,p_k }|={n!}^{k-1}M^{n-1}_{p_1-1,p_2-1,\ldots,p_k-1}$) by induction on $p_1$ starting with the base case 
\begin{equation}\label{eq:base-case}
|\mC^n_{1,p_2,\ldots,p_k}|={n!}^{k-1}\prod_{t=2}^k{n-1\choose p_t-1}={n!}^{k-1}\,M^{n-1}_{0,p_2-1,\ldots,p_k-1}.
\end{equation}
%which is easy to check using the fact that there are $n!{n-1\choose p-1}$ permutations with cycles colored using every color in $[p]$.
This base case can be checked as follows. The first equality in \eqref{eq:base-case} holds because there are $n!{n-1\choose p-1}$ permutations with cycles colored using every color in $[p]$, and for any permutations $\pi_2,\ldots,\pi_k$ there is a unique permutation $\pi_1$ such that $\pi_1\circ \pi_2\circ \cdots \circ\pi_k=(1,2,\ldots,n)$ and a unique coloring of the cycles of this permutation $\pi_1$ with 1 color. The second equality in \eqref{eq:base-case} holds because $M^{n-1}_{0,p_2-1,\ldots,p_k-1}$ is the set of $(n-1)$-tuples of subsets of $\{2,\ldots,k\}$ for which every $t\in\{2,\ldots,k\}$ appears in exactly $p_t-1$ subsets (so that there are ${n-1\choose p_t-1}$ ways of choosing in which subsets $t$ appears).

Observe now that $M^{n-1}_{p_1,p_2-1,\ldots,p_k-1}+\cdots +M^{n-1}_{p_1-1,\ldots,p_{k-1}-1,p_k}$ can be interpreted as the cardinality of the set $\widetilde{\mM}^n_{p_1,p_2,\ldots,p_k }$ of tuples $(R_1,\ldots,R_n)$ in $\mM^n_{p_1,p_2,\ldots,p_k}$ such that $R_1$ has cardinality $k-1$ (indeed $M^{n-1}_{p_1,p_2-1,\ldots,p_k-1}$ counts the tuples $(R_1,\ldots,R_n)$ in $\mM^n_{p_1,p_2,\ldots,p_k}$ such that $R_1=\{2,3,\ldots,k\}$ etc.). Therefore the preceding discussion shows that Theorem~\ref{thm:Jackson-counting} is equivalent to 
\begin{eqnarray}\label{eq:induction-Jackson}
n!\, |\widetilde{\mC}^n_{p_1,p_2,\ldots,p_k }|={n!}^{k}\,|\widetilde{\mM}^n_{p_1,p_2,\ldots,p_k }|.
\end{eqnarray}
Now, by Theorem~\ref{thm:bij-bidding}, the left-hand side of \eqref{eq:induction-Jackson} is the number of valid biddings of size $n$ and type $(p_1,\ldots,p_k)$, while the right-hand side is  the number of biddings of size $n$ and type $(p_1,\ldots,p_k)$ such that $R_1$ has cardinality $k-1$. This shows that Theorem~\ref{thm:Jackson-counting} is equivalent to Theorem~\ref{thm:tree-puzzle}. 
\end{proof}

The remaining of this section is devoted to the proof of Theorem~\ref{thm:bij-bidding}. In order to analyze the mapping $\Psi$ we introduce an intermediate class of objects called \emph{prebidding} and define a mapping $\vartheta$ between labelled nebulas and \emph{prebiddings}, and then a mapping $\sigma$ between prebiddings and biddings such that $\Psi=\sigma\circ\vartheta$.

A \emph{prebidding} is a pair $(\prec, (R_1,\ldots,R_n))$ where $\prec$ is a linear order on $[k]\times [n]$ and $R_1,\ldots,R_n$ are strict subsets of $[k]$. We say that a prebidding is \emph{valid} if the greatest element is of the form $(k,\ell_0)$ for some $\ell_0\in[n]$ and if, whenever $(t,i)$ and $(t',i')$ are consecutive pairs in the order $\prec$, or when $(t,i)=(k,\ell_0)$ and $(t',i')$ is the least element, one has $t'=\al(t,R_i)$ (that is, $t'=t-1$ if $t\in R_i$ and $t'=t+r$ if $t\notin R_i$, $t+1,\ldots,t+r\in R_i$ and $t+r+1\notin R_i$). 

We now consider the mapping $\vartheta$ which associates to a labelled rooted $k$-nebula $N$ the prebidding $\vartheta(N)=(\prec_N, (R_1,\ldots,R_n))$, where $\prec_N$ is the appearance order on $[k]\times[n]$ and $R_i\subsetneq [k]$ is the set of types of the buds incident to the black vertex labelled $i$.

\begin{lemma}\label{lem:vartheta}
The mapping $\vartheta$ is a bijection between labelled rooted nebulas and valid prebiddings.
\end{lemma}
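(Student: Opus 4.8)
The plan is to prove Lemma~\ref{lem:vartheta} by exhibiting the inverse of $\vartheta$ explicitly, thereby showing both that $\vartheta$ lands in the set of valid prebiddings and that it is a bijection. First I would verify that $\vartheta(N)$ is indeed a valid prebidding: the appearance order $\prec_N$ has greatest element $(k,\ell_0)$ by construction (the total order was defined by declaring $(k,\ell_0)$ greatest, where $\ell_0$ is the root vertex label), and $R_i\subsetneq[k]$ since each black vertex has degree $k$ and is incident to at least one edge (otherwise the nebula would be disconnected), so not all $k$ types can be buds. The real content is the consecutivity condition $t'=\al(t,R_i)$, which should be read off from part~(b) of Figure~\ref{fig:bidding}: when turning clockwise around $N$, after visiting the white corner $c(t,i)$ one crosses the half-edge of type $t$ at label $i$ (an edge or a white bud) and must determine the type $t'$ of the next white corner visited. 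The key local computation is to trace the clockwise walk across the black vertex labelled $i$: because the types of half-edges \emph{increase} clockwise around a black vertex (Condition (iii) of nebulas) and we skip over buds, landing on the next genuine edge, the next white corner has type exactly $t-1$ when $t\in R_i$ (we turn back immediately) or type $t+r$ when $t\notin R_i$ and $t+1,\ldots,t+r$ are buds but $t+r+1$ is not. This is precisely the definition of $\al(t,R_i)$ in Definition~\ref{def:alpha}.

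Having established that $\vartheta$ maps into valid prebiddings, I would construct the inverse by showing that a valid prebidding $(\prec,(R_1,\ldots,R_n))$ reconstructs a unique labelled rooted nebula. The subsets $R_1,\ldots,R_n$ determine the $n$ labelled black vertices together with their incident buds and edges: the black vertex labelled $i$ has a bud of type $t$ for each $t\in R_i$ and an edge-stub of type $t$ for each $t\notin R_i$, arranged clockwise by increasing type. Thus the black \emph{corners} and the identities of all edges and white buds (each edge or white bud of type $t$ inherits label $i$ from its black vertex) are fixed. What remains is to reconstruct the white vertices, i.e.\ how the white corners $c(t,i)$ are grouped and cyclically ordered around white vertices, and this is exactly what the linear order $\prec$ encodes: the validity condition forces the sequence of white corners obtained by walking in $\prec$-order to match up consecutive half-edges across black vertices via $\al$, so reading the order around the single face and using Condition~(iii) for white vertices (types \emph{decrease} clockwise) pins down the white-vertex rotation system uniquely. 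The root vertex is recovered as the black vertex labelled $\ell_0$, the label of the greatest element.

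The main obstacle I anticipate is the bookkeeping needed to show that this reconstruction actually yields a \emph{single-faced} map satisfying all four nebula conditions, rather than merely a collection of stubs and an abstract order. The crucial point is that a valid prebidding traces out one closed clockwise tour: starting from the least element and repeatedly applying the rule $t'=\al(t,R_i)$ to pass from one white corner to the next, the cyclic structure of $\prec$ guarantees we return to the start after visiting all $kn$ corners, which forces the resulting map to have exactly one face. I would make this precise by checking that the successor relation induced by $\prec$ (together with the wraparound from $(k,\ell_0)$ to the least element) is consistent with traversing the boundary of a single simply-connected face, so that gluing the white corners according to $\prec$ and the black corners according to the $R_i$ produces a genuine map. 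One then verifies $\vartheta\circ\vartheta^{-1}=\mathrm{id}$ and $\vartheta^{-1}\circ\vartheta=\mathrm{id}$, both of which follow routinely once the geometric picture of Figure~\ref{fig:bidding} is translated into the combinatorial successor rule. Finally, I would confirm that Condition~(iv) (the balance of black and white buds of each type) is automatic: for a white bud of type $t$ to appear it must have a label in the appropriate set, and the counts match because every white bud of type $t$ shares its label with a black vertex carrying a black bud of type $t$.
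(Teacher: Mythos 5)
Your proposal is correct and follows essentially the same approach as the paper: your verification that $\vartheta(N)$ is a valid prebidding is the paper's argument word for word (the local analysis of the clockwise tour at a white bud versus at an edge, as in Figure~\ref{fig:bidding}(b)), and bijectivity is likewise established by explicitly reconstructing the nebula from a valid prebidding. The one place where you diverge in mechanics is that reconstruction: the paper unrolls the prebidding into the boundary word of a polygon (whose interior is the single face, with buds drawn inside) and then glues the two edge-sides carrying the same type and label, so that single-facedness comes for free from the polygon picture; you instead rebuild the black stars from the sets $R_i$ and recover the white-vertex rotation system from $\prec$, which forces you to prove single-facedness as a separate step. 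Your argument for that step is sound --- validity makes the face-tracing successor rule run through all $kn$ white corners in the cyclic order $\prec$, and every face of such a map is incident to at least one white corner --- so the obstacle you flag does go through; it is simply a verification the paper's polygon-gluing route avoids.
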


\begin{proof}
Let $N$ be a nebula. We first prove that the prebidding $\vartheta(N)=(\prec_N, (R_1,\ldots,R_n))$ is valid. It is clear from the definition of the appearance order $\prec_N$ that the greatest element is of the form $(k,\ell_0)$ for some $\ell_0\in[n]$. We now consider pairs $(t,i)$ and $(t',i')$ which are either consecutive in the appearance order or such that $(t,i)=(k,\ell_0)$ and $(t',i')$ is the least element of the appearance order. By definition, if $t\in R_i$ the pair $(t,i)$ corresponds to a white bud of $N$ and if $t\notin R_i$ it corresponds to an edge of $N$. 
We first suppose that $t\in R_i$. Since $t\in R_i$, the white corner $c(t,i)$ precedes the white bud $b$ of type $t$ labelled $i$, so that after visiting the corner $c(t,i)$ the clockwise tour of $N$ crosses the bud $b$ and arrives at a white corner preceding either a bud or an edge of type $t-1$ (see Figure~\ref{fig:bidding}(b)). Thus $t'=t-1$.  
Suppose now that $t\notin R_i$. In this case the pair $(t,i)$ corresponds to an edge $e$ incident to the black vertex $v$ labelled $i$ and to a white vertex $w$. After passing through the white corner $c(t,i)$ preceding $e$ around $w$, the clockwise tour follows $e$, crosses the buds of type $t+1,\ldots,t+r\in R_i$ around the black vertex $v$, then follows an edge of type $t+r+1\notin R_i$ and arrives at a white corner preceding either an edge or a bud of type $t+r$ (see Figure~\ref{fig:bidding}(b)). Thus $t'=t+r$. This completes the proof that the prebidding $\vartheta(N)$ is valid. 

We now prove the injectivity of the mapping $\vartheta$. Observe that from the prebidding $\vartheta(N)$ one can deduce the complete list of edges followed and buds crossed during the clockwise tour of the nebula $N$. This determines a polygon with black and white vertices, and with buds drawn on the inside region. This is illustrated in Figure \ref{fig:bidding-to-nebulas}. Moreover the edges come in pairs having the same types and labels so that one knows how to glue the edges of the polygon in pairs in order to recover the nebula $N$. Lastly the root vertex of $N$ is identified as the black vertex of label $\ell_0$, where $(k,\ell_0)$ is the greatest element of $\prec_N$. 

In order to prove the surjectivity of $\vartheta$ we consider a valid prebidding $P=(\prec, (R_1,\ldots,R_n))$ and want to check that one can apply the above mentioned procedure to get a nebula (see Figure \ref{fig:bidding-to-nebulas}). From the prebidding $P$ one can construct a polygon with black and white vertices, and with buds drawn on the inside region such that edges and buds have types which increase clockwise around white vertices and decrease clockwise around white vertices (the polygon is constructed in such a way that its clockwise tour is described by the prebidding $P$). Now it is easy to check that for each type $t$ and label $i$ either there is both a white bud and a black bud of type $t$ labelled $i$ (this happens if $t\in R_i$) or there are two edges of type $t$ labelled $i$ one going from a black vertex to a white vertex, and one going the opposite way when turning clockwise around the polygon (this happens if $t\notin R_i$). Thus one can glue together the pairs of edges of the same types and labels, and thereby obtain a map $N$ with a single face such that its clockwise tour is described by the prebidding $P$. In the map $N$ the black vertices have degree $k$ since for every $i\in[n]$ the black vertex labelled $i$ is incident exactly to the edges and black buds labelled $i$. Thus $N$ is a nebula. Lastly it is clear that $\vartheta(N)=P$. This proves the surjectivity and hence the bijectivity of $\vartheta$.
\end{proof}

\begin{figure}[h] 
\begin{center} 
\includegraphics[width=\linewidth]{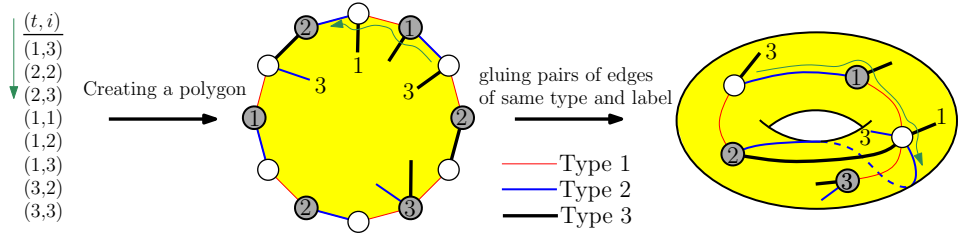} 
\caption{From valid prebiddings to nebulas (the appearance order is indicated from top to bottom).} \label{fig:bidding-to-nebulas}
\end{center} 
\end{figure} 

We now define a mapping $\sigma$ on valid prebiddings by setting $$\sigma(\prec, (R_1,\ldots,R_n))=((\om_1,\ldots,\om_k), (R_1,\ldots,R_n)),$$ where $\om_t$ is the permutation of $[n]$ defined by $(t,\om_t(1))\prec(t,\om_t(2))\prec\cdots \prec(t,\om_t(n))$. 
Observe that $\Psi=\sigma\circ\vartheta$. The proof of the following lemma is based on the BEST Theorem.
%By definition $\Psi=\sigma\circ\vartheta$ and the proof of Theorem~\ref{thm:bij-bidding} will be completed after he following lemma, which will be proved using the BEST Theorem.
\begin{lemma}\label{lem:sigma}
The mapping $\sigma$ is a bijection between valid prebiddings and valid biddings.
\end{lemma}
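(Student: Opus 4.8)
The plan is to prove that $\sigma$ is a bijection between valid prebiddings and valid biddings by analyzing what information is lost and recovered when passing from a linear order $\prec$ to the tuple of permutations $(\om_1,\ldots,\om_k)$. The key observation is that the permutations $\om_t$ record, for each type $t$ separately, the relative order in which the $n$ pairs $(t,1),\ldots,(t,n)$ appear in $\prec$, but they forget how the different types are \emph{interleaved}. So the main content of the lemma is that for a \emph{valid} prebidding this interleaving is completely determined by the validity constraint together with the data $((\om_1,\ldots,\om_k),(R_1,\ldots,R_n))$, and that the resulting bidding is exactly the valid ones (those for which $\al((\om_1(n),\ldots,\om_{k-1}(n)),(R_1,\ldots,R_n))$ is a tree).

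First I would make precise how $\prec$ determines and is determined by a reconstruction procedure. Given $((\om_1,\ldots,\om_k),(R_1,\ldots,R_n))$, one attempts to rebuild the linear order by starting from the greatest element $(k,\ell_0)$ (where $\ell_0=\om_k(n)$, since $(k,\ell_0)$ is maximal in $\prec$) and repeatedly applying the rule $(t,i)\mapsto(t',i')$ with $t'=\al(t,R_i)$: the type $t'$ of the next pair is forced by the validity condition, and among the pairs of type $t'$ not yet placed, the next one is the latest one in $\om_{t'}$ not yet used — because the $\om_{t'}$-order is precisely the $\prec$-order restricted to type $t'$, and we are building $\prec$ from the top down. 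This gives a deterministic ``successor'' process, and $\prec$ is recovered if and only if this process visits all $kn$ pairs before returning to $(k,\ell_0)$, i.e. the process produces a single cycle through $[k]\times[n]$ rather than several disjoint cycles.

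The crux is therefore to show that this single-cycle condition is exactly equivalent to $\al(B)$ being a tree, where $B=((\om_1(n),\ldots,\om_{k-1}(n)),(R_1,\ldots,R_n))$. This is where the BEST Theorem enters, as the lemma's statement advertises. I would set up an auxiliary Eulerian-type digraph on the type-set $[k]$ (or on $[k]\times[n]$) whose Eulerian tours correspond to the valid linear orders $\prec$ extending a given $(\om_1,\ldots,\om_k)$-data, with the type-transition rule $t\mapsto\al(t,R_i)$ as the arcs; the permutations $\om_t$ fix the local orders at the vertices, and the maximality of $(k,\ell_0)$ fixes the root. By the BEST Theorem (Lemma~\ref{lem:BESTthm}) the partial local order induced by all but the last outgoing arc at each vertex extends to a genuine Eulerian tour precisely when the last arcs form an arborescence; the last arcs at each type are governed exactly by the values $\om_t(n)$, so the arborescence condition translates into the statement that the graph $\al(B)$ on vertex set $[k]$ is a tree. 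I expect this translation — matching the BEST arborescence condition to the tree $\al(B)$ and verifying that the interleaving is then uniquely determined — to be the main obstacle, since it requires carefully identifying the right digraph and checking that the local orders and the root specification line up with $\om_t$ and $\ell_0$. Once this equivalence is established, injectivity of $\sigma$ follows because $\prec$ is reconstructed uniquely from a valid bidding, surjectivity follows because every valid bidding yields a single-cycle (hence genuine) linear order that is a valid prebidding, and the correspondence is seen to preserve validity in both directions, completing the proof.
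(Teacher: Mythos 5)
Your proposal is correct and follows essentially the same route as the paper's proof: fix $(R_1,\ldots,R_n)$, form the auxiliary digraph on vertex set $[k]$ with arcs $a_{t,i}$ from $t$ to $\al(t,R_i)$, identify the linear orders $\prec$ making $(\prec,(R_1,\ldots,R_n))$ a valid prebidding with Eulerian tours of this digraph anchored at vertex $k$, and then apply the BEST Theorem so that the admissible local orders (the permutations $\om_t$) are exactly those whose last outgoing arcs $t\to\al(t,R_{\om_t(n)})$, $t\in[k-1]$, form a spanning tree directed toward $k$, i.e.\ exactly the valid biddings. One small slip to fix in your reconstruction step: since the rule $t'=\al(t,R_i)$ describes \emph{successors}, starting at the greatest element $(k,\ell_0)$ wraps around to the least element and builds $\prec$ from the bottom up, so at each stage one must take the \emph{earliest} not-yet-used entry of $\om_{t'}$ rather than the latest (the literal top-down version is not deterministic, as the rule cannot be inverted without already knowing the predecessor's label).
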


\begin{proof}
Let $(R_1,\ldots,R_n)$ be a fixed tuple of strict subsets of $[k]$.
We consider the directed graph $G$ with vertex set $[k]$ and arc set $\{a_{t,i},~(t,i)\in[k]\times[n]\}$, where $a_{t,i}$ is an arc from vertex $t$ to vertex $\al(t,R_i)$. First observe that every vertex $t$ of the graph $G$ has $n$ outgoing arcs (the arcs $a_{t,i}$, $i\in[n]$) and $n$ ingoing arcs (because for each $i\in[n]$ there a unique $t'\in [k]$ such that $\al(t',R_i)=t$). Now, to any Eulerian tour of $G$ one can associate a linear order $\prec$ on $[k]\times[n]$ defined by setting $(t,i)\prec (t',i')$ if the arc $a_{t,i}$ is taken before the arc $a_{t',i'}$ during the Eulerian tour. It is easily seen that this gives a bijection $\sigma_1$ between the linear orders $\prec$ such that $(\prec, (R_1,\ldots,R_n))$ is a valid prebidding and the Eulerian tours of $G$ starting (and ending) at vertex~$k$. Moreover, by the BEST Theorem, the Eulerian tours of $G$ starting at vertex~$k$ are in bijection with the assignments for each vertex $t\in[k]$ of a linear order of the arcs going out of $t$ (the order in which these arcs are used during the Eulerian tour) such that the set of greatest arcs going out of the vertices $1,2,\ldots,k-1$ form a spanning tree of $G$ directed toward $k$. Note that the linear order on the outgoing edges of a vertex $t$ can be represented by a permutation $\om_t$ defined by setting $\om_{t}(i)=j$ if $a_{t,j}$ is the arc going out of vertex $t$ used at the $i$th exit of that vertex during the Eulerian tour. Hence the BEST theorem gives a bijection $\sigma_2$ between the Eulerian tours of $G$ and the tuples of permutations $(\om_1,\ldots,\om_k)$ such that $((\om_1,\ldots,\om_k), (R_1,\ldots,R_n))$ is a valid biding. This completes the proof since $\sigma(\prec, (R_1,\ldots,R_n))=(\sigma_2\circ\sigma_1(\prec), (R_1,\ldots,R_n))$.
\end{proof}

By definition $\Psi=\sigma\circ\vartheta$, so Theorem~\ref{thm:bij-bidding} is a direct consequence of Lemmas~\ref{lem:vartheta} and~\ref{lem:sigma}.\hfill $\square$

\section{Cases $k=2,3,4$ of Theorem~\ref{thm:tree-puzzle}.}\label{sec:smallk}
We have shown (Corollary~\ref{cor:bij-bidding}) that for each $k\geq 2$ Jackson's counting formula is equivalent to Theorem~\ref{thm:tree-puzzle}.
In this section we prove a few cases of Theorem~\ref{thm:tree-puzzle}.\\

\noindent \textbf{Case $k=2$}. Let $B=((i),(R_1,\ldots, R_n))$, where $i\in[n]$ and $(R_1,\ldots, R_n)\in \mM^n_{p_1,p_2}$. The graph $G=\al(B)$ has vertex set $\{1,2\}$ and one arc $a_1=(1,x)$ where $x=2$ if $R_{i}=\{1\}$ or $R_{i}=\{2\}$, and $x=1$ otherwise (if $R_i$ is empty). Thus, $G$ is a tree if and only if $R_{i}$ has cardinality 1. Clearly the probability that $R_{i}$ has cardinality 1 is the same as the probability that $R_1$ has cardinality 1, thus Theorem~\ref{thm:tree-puzzle} is proved for $k=2$.\\
 
\noindent \textbf{Case $k=3$}. Let $B=((i,j),(R_1,\ldots, R_n))$, where $i,j\in[n]$ and $(R_1,\ldots, R_n)\in \mM^n_{p_1,p_2,p_3}$. The graph $G=\al(B)$ has vertex set $\{1,2,3\}$ and two arcs $a_1=(1,x)$, and $a_2=(2,y)$. We want to evaluate the probability that the graph $G$ is a tree. There are three possible trees and they occur in the events described by Figure~\ref{fig:casek3}. Using inclusion-exclusion one gets that the probability that $G$ is a tree is 
%$$P(tree)=I_1J_2+I_1J_3+I_2J_3-I_1J_{2,3}-I_2J_{1,3}-I_3J_{1,2}+I_{1,2}J_{1,3}+I_{1,2}J_{2,3}+I_{1,3}J_{2,3},$$
%where $I_{A}J_{B}=I_{B}J_{A}$ denotes the probability of the event $\{A\subseteq R_i ~\&~ B \subseteq R_j\}$.
\begin{eqnarray}
\label{eq:k3brut}
\Pr(tree)&\!\!=\!\!&P_{\{1\}\!\times\!\{2\}}+P_{\{1\}\!\times\!\{3\}}+P_{\{2\}\!\times\!\{3\}} \nonumber\\
&&-P_{\{1\}\!\times\!\{2,3\}}-P_{\{2\}\!\times\!\{1,3\}}-P_{\{3\}\!\times\!\{1,2\}},\\
&&+P_{\{1,2\}\!\times\!\{1,3\}}+P_{\{1,2\}\!\times\!\{2,3\}}+P_{\{1,3\}\!\times\!\{2,3\}},\nonumber 
\end{eqnarray}
where $P_{{A}\times{B}}$ denotes the probability of the event $\{A\subseteq R_i \textrm{ and } B \subseteq R_j\}$ (all these probabilities depend on $n,p_1,p_2,p_3$). We then use the following easy lemma.
\begin{lemma}[Exchange Lemma for $k=3$]\label{lem:exchange}
For $\{a,b,c\}=\{1,2,3\}$ one has 
$$P_{\{a,b\}\times\emptyset}=P_{\{a\}\times\{b\}}-P_{\{a,c\}\times\{b\}}+P_{\{a,b\}\times\{a,c\}}.$$
\end{lemma}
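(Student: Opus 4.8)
The plan is to unfold each probability $P_{A\times B}=\Pr(A\subseteq R_i,\ B\subseteq R_j)$ into a count over the sample space of triples $(\mathbf{R},i,j)$ with $\mathbf{R}=(R_1,\dots,R_n)\in\mM^n_{p_1,p_2,p_3}$ and $i,j\in[n]$ (all uniform and independent), and then to collapse the resulting numerical identity into a one-line polynomial identity via generating functions. Since the generating function below is symmetric in its variables, I may treat $a,b,c$ as an arbitrary relabeling of $1,2,3$ and write $M^n_{p_a,p_b,p_c}=|\mM^n_{p_1,p_2,p_3}|$.

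Two elementary observations drive the reduction. First, because each $R_t$ is a \emph{strict} subset of $\{a,b,c\}$, the condition $\{a,b\}\subseteq R_i$ is equivalent to $R_i=\{a,b\}$, and $\{a,c\}\subseteq R_j$ to $R_j=\{a,c\}$. Second, $\#\{i:a\in R_i\}=p_a$ for \emph{every} tuple, by definition of $\mM^n$. Writing $n_{ab}(\mathbf{R})=\#\{i:R_i=\{a,b\}\}$ and $n_{ac}(\mathbf{R})=\#\{j:R_j=\{a,c\}\}$ and using the independence of $i,j$, the four terms become normalized sums over $\mathbf{R}$: one gets $P_{\{a\}\times\{b\}}=p_ap_b/n^2$ (as $\#\{i:a\in R_i\}$ and $\#\{j:b\in R_j\}$ are the constants $p_a,p_b$), $P_{\{a,c\}\times\{b\}}=\frac{p_b}{n^2 M^n_{p_a,p_b,p_c}}\sum_{\mathbf{R}}n_{ac}$, $P_{\{a,b\}\times\emptyset}=\frac{1}{n\,M^n_{p_a,p_b,p_c}}\sum_{\mathbf{R}}n_{ab}$, and $P_{\{a,b\}\times\{a,c\}}=\frac{1}{n^2 M^n_{p_a,p_b,p_c}}\sum_{\mathbf{R}}n_{ab}n_{ac}$ (here $R_i=\{a,b\}$ and $R_j=\{a,c\}$ force $i\neq j$).

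Next I would evaluate the three sums by a marked-position count, $\sum_{\mathbf{R}}n_{ab}=n\,M^{n-1}_{p_a-1,p_b-1,p_c}$, $\sum_{\mathbf{R}}n_{ac}=n\,M^{n-1}_{p_a-1,p_b,p_c-1}$, and $\sum_{\mathbf{R}}n_{ab}n_{ac}=n(n-1)M^{n-2}_{p_a-2,p_b-1,p_c-1}$; substituting and clearing denominators turns the Exchange Lemma into the numerical identity
\[ n^2 M^{n-1}_{p_a-1,p_b-1,p_c}=p_ap_b\,M^{n}_{p_a,p_b,p_c}-n\,p_b\,M^{n-1}_{p_a-1,p_b,p_c-1}+n(n-1)M^{n-2}_{p_a-2,p_b-1,p_c-1}. \]
To prove this I would pass to the generating function: with $x,y,z$ the variables of colors $a,b,c$ and $F=(1+x)(1+y)(1+z)-xyz=1+x+y+z+xy+xz+yz$, one has $M^m_{p,q,r}=[x^py^qz^r]F^m$, and the prefactors $p_a,p_b$ are supplied by the Euler operators $x\partial_x,y\partial_y$. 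Using $(x\partial_x)(y\partial_y)F^n=nxy\,F^{n-1}+n(n-1)(xF_x)(yF_y)F^{n-2}$ and $(y\partial_y)(xz\,F^{n-1})=(n-1)xz\,(yF_y)F^{n-2}$, where $F_x=1+y+z$ and $F_y=1+x+z$, and cancelling the common factor $n(n-1)F^{n-2}$ (valid for $n\geq2$; when $n=1$ the last term vanishes and the identity is checked directly), everything reduces to the polynomial identity
\[ xy\,F=(xF_x)(yF_y)-xz\,(yF_y)+x^2yz, \]
which is immediate from $xF_x-xz=x(1+y)$ and $(1+y)F_y=F-xz$: the right-hand side equals $x(1+y)\,yF_y+x^2yz=xy(F-xz)+x^2yz=xy\,F$.

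The work here lies entirely in the bookkeeping rather than in any single difficult step. The point to watch is that $P_{\{a,b\}\times\{a,c\}}$ is the only genuinely correlated term, so no identity holds tuple-by-tuple and the averaging over $\mM^n$ is essential; realizing the factors $p_a,p_b$ as differential operators is precisely what makes the four contributions telescope into the one-line polynomial identity above.
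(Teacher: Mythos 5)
Your proof is correct, but it follows a genuinely different route from the paper's. The paper argues bijectively: it rewrites the claimed identity as $\Pr(E_1)=\Pr(E_2)$ for the events $E_1=\{a\in R_i,\ c\notin R_i,\ b\in R_j\}$ and $E_2=\{a,b\in R_i,\ c\notin R_i,\ R_j\neq\{a,c\}\}$ (so that $\Pr(E_1)=P_{\{a\}\times\{b\}}-P_{\{a,c\}\times\{b\}}$ and $\Pr(E_2)=P_{\{a,b\}\times\emptyset}-P_{\{a,b\}\times\{a,c\}}$), and then exhibits a measure-preserving involution between $E_1$ and $E_2$ --- ``exchanging the $b$-content'' of $R_i$ and $R_j$ --- which visibly preserves membership in $\mM^n_{p_1,p_2,p_3}$. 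You instead convert all four probabilities into exact counts (using that every tuple in $\mM^n_{p_1,p_2,p_3}$ has exactly $p_t$ sets containing $t$, plus marked-position enumeration for the terms involving $n_{ab}$ and $n_{ac}$), reduce the lemma to the explicit identity $n^2M^{n-1}_{p_a-1,p_b-1,p_c}=p_ap_b\,M^n_{p_a,p_b,p_c}-n\,p_b\,M^{n-1}_{p_a-1,p_b,p_c-1}+n(n-1)M^{n-2}_{p_a-2,p_b-1,p_c-1}$, and verify it with Euler operators acting on $F^n$, $F=(1+x)(1+y)(1+z)-xyz$; your operator computations, the final polynomial identity $xyF=(xF_x)(yF_y)-xz\,(yF_y)+x^2yz$, and the treatment of the degenerate case $n=1$ all check out. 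What the paper's route buys: the exchange bijection is the conceptual tool that the authors then push (with modifications) to $k\geq 4$, and it explains \emph{why} the identity holds with essentially no computation. What your route buys: it is mechanical and self-contained, reduces everything to coefficient extraction (so it could in principle be automated, in the spirit of the paper's own Gr\"obner-basis remarks for $k\leq 7$), and it quietly sidesteps the coinciding-index subtlety the paper warns about for $k\geq 4$, since your only correlated term $P_{\{a,b\}\times\{a,c\}}$ forces $i\neq j$ automatically because $\{a,b\}\neq\{a,c\}$. The cost is that the combinatorial mechanism is hidden, and the number of terms in such a computation grows rapidly with $k$, which is precisely why the paper organizes the higher cases around exchange lemmas rather than direct expansion.
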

\begin{proof}
We consider the events $E_1=\{a\in R_i,~c\notin R_i \textrm{ and } b\in R_j\}$ and $E_2=\{a,b\in R_i,~c\notin R_i \textrm{ and } R_j\neq\{a,c\}\}$. The probabilities of these event are $\Pr(E_1)=P_{\{a\}\times\{b\}}-P_{\{a,c\}\times\{b\}}$ and $\Pr(E_2)=P_{\{a,b\}\times\emptyset}-P_{\{a,b\}\times\{a,c\}}$. We now define a bijection between the events $E_1$ and $E_2$ by \emph{exchanging the $b$ content} of $R_i$ and $R_j$, that is, by changing the sets $R_i$ and $R_j$ into $R_i'$ and $R_j'$ respectively, where $R_i'=R_i\cup \{b\}$ if $b\in R_j$ and $R_i'=R_i\setminus \{b\}$ otherwise, and similarly $R_j'=R_j\cup \{b\}$ if $b\in R_i$ and $R_j'=R_j\setminus \{b\}$ otherwise. Since exchanging the $b$ content creates a bijection between the events $E_1$ and $E_2$ (which preserves the fact that $(R_1,\ldots,R_n)$ is in $\mM^n_{p_1,p_2,p_3}$) the probabilities of the events $E_1$ and $E_2$ are equal.
\end{proof}
\begin{figure}[h] 
\begin{center} 
\includegraphics[width=\linewidth]{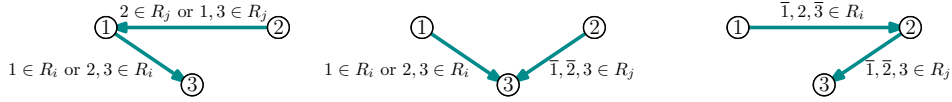} 
\caption{Events for which $G=\al(B)$ is a tree in the case $k=3$ of Theorem~\ref{thm:tree-puzzle}. The notation $\overline{a}\in R$ means $a\notin R$.} \label{fig:casek3}
\end{center} 
\end{figure} 

Using Lemma~\ref{lem:exchange} (three times) in \eqref{eq:k3brut} gives 
$$\Pr(tree)=P_{\{1,2\}\times\emptyset}+ P_{\{1,3\}\times\emptyset}+P_{\{2,3\}\times\emptyset}.$$ 
Thus the probability that $G=\al(B)$ is a tree is equal to the probability that $R_i$ has cardinality 2. Theorem~\ref{thm:tree-puzzle} follows.\\

\noindent \textbf{Case $k=4$ and greater}. In the case $k=4$ one considers $B=((i,j,\ell),(R_1,\ldots, R_n))$, where $i,j,\ell\in[n]$ and $(R_1,\ldots, R_n)\in \mM^n_{p_1,p_2,p_3,p_4}$. One gets the following analogue of \eqref{eq:k3brut}:
%%\begin{eqnarray}
%% \label{eq:k4brut}
%% \Pr(tree)&\!\!=\!\!& P_{\{1\}\!\times\!\{2\}\!\times\!\{3\}}+P_{\{1\}\!\times\!\{2\}\!\times\!\{4\}}+P_{\{1\}\!\times\!\{3\}\!\times\!\{4\}}+P_{\{2\}\!\times\!\{3\}\!\times\!\{4\}}\\
%% &&- P_{\{1\}\!\times\!\{2\}\!\times\!\{3,4\}}-P_{\{2\}\!\times\!\{3\}\!\times\!\{4,1\}}-P_{\{3\}\!\times\!\{4\}\!\times\!\{1,2\}}-P_{\{4\}\!\times\!\{1\}\!\times\!\{2,3\}}\nonumber \\
%% &&+P_{\{1\}\!\times\!\{2,3\}\!\times\!\{3,4\}}+P_{\{2\}\!\times\!\{3,4\}\!\times\!\{4,1\}}+P_{\{3\}\!\times\!\{4,1\}\!\times\!\{1,2\}}\nonumber \\
%% \end{eqnarray}
\begin{eqnarray}
\label{eq:k4brut}
\Pr(tree)&\!\!\!=\!\!\!& 
\widetilde{P}_{\{1\}\!\times\!\{2\}\!\times\!\{3\}}
-\widetilde{P}_{\{1\}\!\times\!\{2\}\!\times\!\{3,4\}}
+\widetilde{P}_{\{1\}\!\times\!\{2,3\}\!\times\!\{3,4\}}
-\widetilde{P}_{\{1\}\!\times\!\{2,3,4\}\!\times\!\{3\}}\nonumber\\
&&-\widetilde{P}_{\{1,2\}\!\times\!\{2,3\}\!\times\!\{3,4\}}
+\widetilde{P}_{\{1\}\!\times\!\{4,1,2\}\!\times\!\{2,3\}}
+\widetilde{P}_{\{1\}\!\times\!\{4,1,2\}\!\times\!\{3,4\}}
-\widetilde{P}_{\{1\}\!\times\!\{4,1,2\}\!\times\!\{2,3,4\}}\nonumber\\
&& +\widetilde{P}_{\{1,2,3\}\!\times\!\{3,4\}\!\times\!\{4,1\}}
-\widetilde{P}_{\{1,2\}\!\times\!\{2,3,4\}\!\times\!\{3,4,1\}}
+\widetilde{P}_{\{1,2,3\}\!\times\!\{3,4,1\}\!\times\!\{4,1,2\}}\nonumber
\end{eqnarray}
with $\tilde{P}_{A,B,C}=\sum_{t=1}^4P_{t+A,t+B,t+C}$, where $t+A$ is the cyclically shifted set $\{t+a,a\in A\}$ and $P_{A,B,C}$ is the probability of the event $\{A\subseteq R_i,~ B\subseteq R_j\textrm{ and } C\subseteq R_\ell\}$. This expression can be obtained by direct inspection of the 16 possible Cayley trees, or by using the matrix-tree theorem. Using the later method we were able to compute analogues of \eqref{eq:k3brut} for all $k\leq 9$. 

In the case $k=4$ we can use \emph{exchange lemmas} similar to Lemma~\ref{lem:exchange} to prove Theorem~\ref{lem:exchange} (one has to use six such lemmas, and the calculations can be organized according to the order of magnitude of these probabilities when $n$ tends to infinity, with $p_1,p_2,p_3,p_4$ kept fixed). One can in fact generalize the type of exchange lemmas used, and use Gr\"obner bases to prove Theorem~\ref{thm:tree-puzzle} (hence Theorem~\ref{thm:Jackson-counting}) up to $k=7$. There is however one extra difficulty starting with $k=4$: the \emph{exchange operation} as the one used in the Proof of the lemma~\ref{lem:exchange} become invalid unless one conditions on the fact that the indices $i,j,\ell,\ldots$ under consideration are distinct (otherwise the exchange operation might affect one of the sets outside of the exchange). Thus for $k\geq 4$, the computer assisted proofs require to consider the different subcases corresponding to having non-distinct indices among $i,j,\ell,\ldots$. In a forthcoming paper \cite{BM:trees-from-sets} we shall give a general proof of Theorem~\ref{thm:tree-puzzle} for all $k$ and prove other similar results.

\bibliographystyle{plain} 
\bibliography{biblio-constellations} 
\label{sec:biblio} 
 
\small

\noindent Olivier Bernardi, Alejandro H. Morales, 
Department of Mathematics, Massachusetts Institute of Technology, 
Cambridge, MA USA 02139.\\
\{{\tt bernardi, ahmorales}\}{\tt @math.mit.edu}

\end{document}